\DeclareMathOperator{\supp}{supp}
\DeclareMathOperator{\T}{\mathbb{T}}
\newtheorem{theorem}{Theorem}[section]
\newtheorem{lemma}[theorem]{Lemma}
\newtheorem{proposition}[theorem]{Proposition}
\theoremstyle{definition}
\newtheorem{definition}[theorem]{Definition}
\theoremstyle{remark}
\newtheorem{remark}[theorem]{Remark}
\newtheorem*{remark*}{Remark}
\newtheorem*{remarks*}{Remarks}
\numberwithin{equation}{section}
\newtheorem*{claim*}{Claim} 
\newtheorem*{rem}{Remark}
\title{Aggregation equations with fractional diffusion: preventing concentration by mixing
\vspace{.4cm}}
\author{Katharina Hopf and José L.\;Rodrigo
\vspace{.4cm}}
\date{}
\begin{document}
\maketitle
\begin{abstract}\noindent
We investigate a class of aggregation-diffusion equations with strongly singular kernels and weak (fractional) dissipation in the presence of an incompressible flow. Without the flow the equations are supercritical in the sense that the tendency to concentrate dominates the strength of diffusion and solutions emanating from sufficiently localised initial data may explode in finite time. The main purpose of this paper is to show that under suitable spectral conditions on the flow, which guarantee good mixing properties, for any regular initial datum the solution to the corresponding advection-aggregation-diffusion equation is global if the prescribed flow is sufficiently fast. This paper can be seen as a partial extension of Kiselev and Xu (\textit{Arch.\,Rat.\,Mech.\,Anal.} 222(2), 2016), and our arguments show in particular that the suppression mechanism for the classical 2D parabolic-elliptic Keller-Segel model devised by Kiselev and Xu also applies to the fractional Keller-Segel model (where $\triangle$ is replaced by $-\Lambda^\gamma$) requiring only that $\gamma>1$. In addition, we remove the restriction to dimension $d<4$.
\end{abstract}

\section{Introduction}\label{sec:intro}
We are interested in the question of how the presence of a (prescribed, steady) incompressible flow may alter the long-time dynamics of solutions of a class of aggregation-diffusion equations with strongly singular kernels. More specifically, our starting point is the evolutionary problem
\begin{equation}\label{agg-frac-diff}
  \partial_t\rho=-\Lambda^\gamma\rho+\nabla\cdot(\rho\nabla K\ast\rho) \;\text{ in }\;(0,\infty)\times\mathbb{T}^d
\end{equation}
subject to the initial condition $\rho(0)=\rho_0$ for some sufficiently regular density $\rho_0\ge0$. 
Here  $\gamma>1$, $\mathbb{T}^d=[-\frac{1}{2},\frac{1}{2})^d$ is the periodic box, 
 and we assume that the periodic convolution kernel $K$ has the following properties:
\begin{itemize}
 \item Smoothness away from the origin.
 \item $\nabla K(x)\sim \frac{x}{|x|^{2+a}}$ near $x=0$ for some $a\ge0$. Note that this is the case if $-K\sim|x|^{-a}$ in some neighbourhood of the origin (with the understanding $K\sim\log|x|$ if $a=0$).
 For simplicity of presentation, we will assume\footnote{The behaviour of the kernel near its singularity at the origin (including its sign) determines to a large extent the interaction modelled by the nonlinear term in~\eqref{agg-frac-diff}. Our choice of the sign guarantees a predominantly attractive interaction and 
 is essential for the construction of exploding solutions.} 
 $\nabla K(x)= \frac{x}{|x|^{2+a}}$ on $B_\varepsilon(0)$ for some $0<\varepsilon\ll1$.
\end{itemize}
In order to describe our results, we first need to introduce some fundamental properties of equation~\eqref{agg-frac-diff}. 
\paragraph{Conservation of mean.}
First note that formally for any solution to equation~\eqref{agg-frac-diff} the mean value is conserved in time:
\begin{align*}
 \int_{\mathbb{T}^d}\rho(t,x)dx=\int_{\mathbb{T}^d}\rho_0(x)dx.
\end{align*}
In fact, all evolution equations which we shall consider here enjoy this property, and in this context we will abbreviate $\bar\rho=\int_{\mathbb{T}^d}\rho_0.$ In applications $\rho$ usually describes a density, and for the sake of exposition, we will henceforth assume $\rho_0\ge0$, a property, which by the maximum principle (see e.g.\,\cite{li_wellposedness_2010} for a proof in a related setting) is preserved in time for any sufficiently regular solution to~\eqref{agg-frac-diff}. 
It will, however, be obvious that (apart from the blowup proof in Appendix~\ref{app:blowup}) our results remain valid without the assumption of positivity. 

\paragraph{Scaling.}
Let us for the moment replace $\mathbb{T}^d$ by $\mathbb{R}^d$ and consider the scaling properties of the equation obtained by substituting in~\eqref{agg-frac-diff} the kernel $\nabla K$ for its homogeneous approximation near the origin, i.e.\,$\frac{x}{|x|^{2+a}}$.
This equation is invariant under the scaling 
\begin{align}\label{eq:scale}
  \rho_\lambda(t,x)=\lambda^{\gamma-2+d-a}\rho(\lambda^{\gamma}t,\lambda x),\;\;\;\lambda>0.
\end{align}
Moreover, by preservation of mean, non-negative solutions have conserved $L^1_x$-norm. 
Thus, the exponent $\gamma=\gamma_c$ which leaves the $L^1_x$-norm of the rescaled solutions $\rho_\lambda$ invariant in the sense that $\|\rho_\lambda(t,\cdot)\|_{L^1}=\|\rho(\lambda^{\gamma} t,\cdot)\|_{L^1}$ plays a distinguished role and is generally referred to as the $L^1$-\textit{critical} exponent. From~\eqref{eq:scale} we obtain 
\begin{equation*}
 \gamma_c=2+a.
\end{equation*}
In this degenerate case the conservation of mean property does not provide any control of the scaling parameter~$\lambda$ and, in principle, it would allow for self-similar blowup. 
For $\gamma<\gamma_c$ (resp.\,$\gamma>\gamma_c$) equation~\eqref{agg-frac-diff} is called $L^1$-\textit{supercritical} (resp.\;$L^1$-\textit{subcritical}).
In the case $a=0$ and $\gamma\in(1,2]$ (which implies $\gamma\le\gamma_c$) it is not difficult to produce\footnote{A proof is  given in Appendix~\ref{app:blowup}.} solutions exploding in finite time for suitably localized smooth initial data of large mass using a virial type argument similar to the strategy in~\cite[Appendix~I]{kiselev_suppression_2016}. 
 This reflects the above scaling heuristics: in the $L^1$-supercritical (and critical) regime,
diffusion is too weak to be generically able to compete with the aggregation effects induced by the quadratic drift term in~\eqref{agg-frac-diff} with velocity $\nabla K\ast\rho$. 
Conceptually, one would therefore also expect the existence of blowup solutions for more singular kernels ($a>0$) as long as diffusion is not too strong (at most critical). However, the standard moment method does not seem to be applicable directly in this case as for $a>0$ the arising \say{perturbation} terms 
\begin{align}\label{eq:bad-term}
  \int\int\frac{x-y}{|x-y|^{a+2}}\cdot\Psi(x,y)\rho(x)\rho(y)\,dydx
\end{align}
(with $\Psi$ being some smooth cut-off which in general does not vanish along the diagonal)
can no longer be controlled only in terms of the (conserved) mass $\int\rho$. The emergence of terms of the form~\eqref{eq:bad-term} in the moment method seems to be unavoidable on $\mathbb{T}^d$ even if restricting to more specific kernels $K$ -- the reason being that polynomials (such as $|x|^2$) are not periodic.

\paragraph{Background and results.}
One of our main goals (cf.\;Theorem~\ref{thm:suppression}) is to show that there exists an exponent $\gamma_0<\gamma_c$ such that (the expected) blowup can be suppressed through the action of a suitable fast flow whenever $\gamma\in(\gamma_0,\gamma_c]$. This question is motivated by the work of Kiselev and Xu~\cite{kiselev_suppression_2016} where the authors prove a similar statement for the two- and three-dimensional parabolic-elliptic Keller-Segel model (one of the fundamental models for aggregation in several biological and physical systems). 
The class of flows we focus on is a generalisation of weakly mixing flows in the ergodic sense, and a natural adaptation of the class of \textit{relaxation enhancing} flows considered in~\cite{kiselev_suppression_2016} to the case of fractional dissipation. The notion of \text{relaxation enhancing} flows was introduced in the work~\cite{constantin_diffusion_2008} by Constantin, Kiselev, Ryzhik and Zlato\v{s}, which constitutes a core reference for our approach.
Notice that for $a=d-2$ the kernel $K$ has the same singularity at the origin as the Newton kernel and, informally speaking, in this case equation~\eqref{agg-frac-diff} becomes the fractional (or classical if $\gamma=2$) parabolic-elliptic Keller-Segel model. In this sense our model is a generalisation of Keller-Segel and indeed, virtually the same analysis as in this paper can be used to give a direct derivation of the corresponding results for Keller-Segel.
For a biological motivation and more background on both (chemotactic) aggregation phenomena as well as fluid mixing and its possible regularising effects, we refer to~\cite{kiselev_suppression_2016} and references therein.
We conclude by pointing out another interesting work \cite{bedrossian_suppression_2016}, which  demonstrates that chemotactic singularity formation can also be prevented by mixing due to a fast shear flow. The underlying mechanism is, however, rather different from the one considered here and is not able to suppress more than one dimension (of the Keller-Segel model which is $L^1$-critical for $d=2$ and $L^1$-supercritical in higher dimensions). Our second main result (Theorem~\ref{thm:suppKS}) will show that the suppression mechanism by ergodic type mixing has a much weaker dimensional dependence in the sense that it applies to the Keller-Segel model in arbitrarily high dimension.

\bigskip

We finish this section by introducing two technical assumptions on the kernel $K$ needed in large parts of our analysis, commenting on local properties of solutions to~\eqref{agg-frac-diff}, fixing basic notations and indicating the organisation of the rest of this text.
\paragraph{Further assumptions on \texorpdfstring{$K$}{}.}
For fixed $\varepsilon>0$ and $p_0>1$ we note
\begin{align*}
 \int_{B_\varepsilon(0)}\frac{1}{|x|^{(1+a)p_0}}\,dx=c_d\int_0^\varepsilon r^{d-1-p_0(1+a)}dr,
\end{align*}
which shows that $\nabla K\in L^{p_0}(\mathbb{T}^d)$ if and only if 
\begin{align}\label{gradLP}
 p_0<\frac{d}{1+a}.
\end{align}
In the following we will therefore assume that the parameters $d\ge2$ (integer) and $a\ge0$ are such that $\frac{d}{1+a}>1$, so that in particular there always exists $p_0>1$ satisfying \eqref{gradLP}.

 Moreover, since we focus on $L^2$-methods in our first main result (cf.~Footnote~\ref{fn:gamma<1}), we will assume for this part that $2+a-\frac{d}{2}<2$, or equivalently, 
 \begin{align}\label{eq:L2cond}
  \frac{d}{2a}>1.
 \end{align}
 This condition ensures that the lower bound $\gamma_0=2+a-\frac{d}{2}$ on $\gamma$, which makes the $L^2$-norm (heuristically) a subcritical quantity for~\eqref{agg-frac-diff}, is less than $2$. 

\paragraph{LWP and smoothing.}\label{page:smoothing}
If $\gamma>1$, problem~\eqref{agg-frac-diff} is locally well-posed in $H^s(\mathbb{T}^d)$ for sufficiently large $s\ge s_0(d)$. 
More specifically, if\hspace{.05cm}\footnote{Notice that for $\gamma=2+a-d\left(1-\frac{1}{p}\right)$ the scaling~\eqref{eq:scale} preserves the $L^p_x$-norm in the sense that $\|\rho_\lambda(t,\cdot)\|_{L^p}=\|\rho(\lambda^\gamma t,\cdot)\|_{L^p}$ so that the required strength of diffusion for making the $L^p$ norm heuristically subcritical decreases with increasing $p$.
  Thus, one may expect to obtain improved lower bounds on $\gamma$ by working in $L^p$ spaces of higher integrability. In Theorem~\ref{thm:suppKS} we will illustrate that in some sense this is indeed the case using the example of the  standard Keller-Segel model. 
  In two spatial dimensions, for Keller-Segel type singularities ($a=d-2$)
  $L^2$ methods work for any $\gamma>1$, which is why we first focus on the case $p=2$. See also the discussion in Section~\ref{sec:suppression} (page~\pageref{eq:necessaryEst}) for difficulties arising in $L^p$.\label{fn:gamma<1}}
\begin{align}\label{eq:LpSubcriticalCond}
\gamma>\max\left\{2+a-d\left(1-\frac{1}{p}\right),1\right\},
\end{align} then local existence and uniqueness already hold in $L^p(\mathbb{T}^d)$. 
This can be shown using semigroup estimates for $-\Lambda^\gamma$ and a fixed point argument similar to~\cite{kiselev_biomixing_2012_enhancement} and~\cite{biler_blowup_2010}. 

Throughout these notes, for simplicity of exposition, we will formulate auxiliary results under the assumption of a smooth initial datum $\rho_0$ (resp.\,a smooth solution). This assumption can be removed by standard arguments exploiting the fact that, as soon as condition~\eqref{eq:LpSubcriticalCond} holds true, the smoothing effect induced by $-\Lambda^\gamma$ is strong enough to instantaneously regularise the (local) solution emanating from an $L^p$ datum. 

\paragraph{Notations.}

For smooth periodic functions $f(x)=\sum_{k\in\mathbb{Z}^d}\hat f(k) \text{e}^{2\pi \text{i}x\cdot k}$ and $\sigma \in\mathbb{R}$ we define
$$\|f\|_{\dot H^\sigma}^2=\sum_{k\in\mathbb{Z}^d\setminus\{0\}}|k|^{2\sigma}|\hat f(k)|^2$$ 
and 
$$\|f\|_{H^\sigma}^2=\sum_{k\in\mathbb{Z}^d}\langle k\rangle^{2\sigma}|\hat f(k)|^2,$$
where $\langle k\rangle=(1+|k|^2)^\frac{1}{2}$.
The space $H^\sigma(\mathbb{T}^d)$ is defined as the completion of $C^\infty(\mathbb{T}^d)$ under the norm $\|\cdot\|_{H^\sigma}$.
We next define  the fractional derivative $\Lambda^\sigma$  via 
\begin{align}\label{def:fractional}
  \Lambda^\sigma f(x)=\sum_{k\in\mathbb{Z}^d\setminus\{0\}}|k|^\sigma\hat f(k)\text{e}^{2\pi\text{i}k\cdot x}.
\end{align}
For sufficiently regular periodic functions $f,g$ the following identities are immediate
\begin{align*}
  \|f\|_{\dot H^\sigma}&=\|\Lambda^\sigma f\|_{L^2},
  \\\Lambda^\sigma(f\ast g)&=f\ast\Lambda^\sigma g.
\end{align*}
Moreover,
\begin{align*}
  \int_{\mathbb{T}^d}f\Lambda^\sigma g&= \int_{\mathbb{T}^d}(\Lambda^\sigma f)\,g,\\
  \Lambda^{\sigma_1}\Lambda^{\sigma_2}f&=\Lambda^{\sigma_1+\sigma_2}f.
\end{align*}

Constants $C$ or $C(\dots)$ may change from line to line and unless explicitly  indicated otherwise they are continuous and non-decreasing functions of their (non-negative) arguments. Their possible dependence on the parameters $\gamma,a$ and $d$ will usually not be indicated explicitly.
For quantities $A,B\ge0$ the notation $A\lesssim B$ means that there exists a constant $0<C<\infty$ (which may depend on fixed parameters) such that $A\le CB$. Furthermore, $A\sim B$ stands for $A\lesssim B$ and $B\lesssim A$. If it is appropriate to indicate the dependence of the hidden constant in \say{ $\lesssim$ } on certain parameters $p_1,\dots$, this will be done through\, $\lesssim_{p_1,\dots}$.

\paragraph{Outline.} This text is structured as follows. In the next section we recall several well-known estimates needed for the subsequent analysis. 
Section~\ref{sec:apriori} is devoted to the derivation of a priori estimates required for our $L^2$-based suppression result.
In Section~\ref{sec:suppression} we first introduce further concepts in order to determine the flows leading to the specific prevention of concentration mechanism which we here focus on. Then we turn to the proof of our main results.  
\\In Appendix~\ref{app:blowup} the existence of exploding solutions to equation~\eqref{agg-frac-diff} is proved in the case $a=0$, $\gamma\in(1,2]$. The appendix to this text further contains two extensions of results in the literature which we require for our main argument in Section~\ref{sec:suppression} (see Appendices~\ref{app:tranport-diffusion} and~\ref{app:transport}). Finally, in Appendix~\ref{app:RE-examples} we construct examples of incompressible flows, which provide a justification for our Definition~\ref{def:gammaRE} of $\gamma$-relaxation enhancing flows.

\section{Auxiliary tools}\label{sec:aux}
 
Here we collect some standard inequalities, which will be used throughout the text.

\begin{lemma}[Interpolation]\label{l:interpol}Let $\sigma,\mu>0$. Then for all $f\in C^\infty(\mathbb{T}^d)$
 \begin{equation}
  \|f\|_{\dot H^\sigma}\lesssim\|f\|_{L^2}^{1-b}\|f\|_{\dot H^{\sigma+\mu}}^{b},
 \end{equation} 
 where $b=\frac{\sigma}{\sigma+\mu}$.
\end{lemma}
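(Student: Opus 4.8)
The plan is to work entirely on the Fourier side, where all three norms appearing in the statement are weighted $\ell^2$-norms of the sequence $(\hat f(k))_{k\neq 0}$, and to exploit the elementary homogeneity of the weight $|k|^{2\sigma}$ together with Hölder's inequality for series.

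First I would record that, by definition,
\begin{align*}
 \|f\|_{\dot H^\sigma}^2=\sum_{k\in\mathbb{Z}^d\setminus\{0\}}|k|^{2\sigma}|\hat f(k)|^2,
\end{align*}
and observe that the exponent $b=\frac{\sigma}{\sigma+\mu}\in(0,1)$ is precisely the one for which $2\sigma=b\cdot 2(\sigma+\mu)$; hence for every $k\neq0$ one has the pointwise identity $|k|^{2\sigma}=\bigl(|k|^{2(\sigma+\mu)}\bigr)^{b}$. Inserting this into the sum and splitting the summand as
\begin{align*}
 |k|^{2\sigma}|\hat f(k)|^2=\bigl(|k|^{2(\sigma+\mu)}|\hat f(k)|^2\bigr)^{b}\bigl(|\hat f(k)|^2\bigr)^{1-b},
\end{align*}
I would apply Hölder's inequality on $\ell^1(\mathbb{Z}^d\setminus\{0\})$ with conjugate exponents $\tfrac1b$ and $\tfrac1{1-b}$, obtaining
\begin{align*}
 \|f\|_{\dot H^\sigma}^2\le\Bigl(\sum_{k\neq0}|k|^{2(\sigma+\mu)}|\hat f(k)|^2\Bigr)^{b}\Bigl(\sum_{k\neq0}|\hat f(k)|^2\Bigr)^{1-b}\le\|f\|_{\dot H^{\sigma+\mu}}^{2b}\,\|f\|_{L^2}^{2(1-b)},
\end{align*}
where in the last step I simply bound $\sum_{k\neq0}|\hat f(k)|^2\le\sum_{k\in\mathbb{Z}^d}|\hat f(k)|^2=\|f\|_{L^2}^2$ by Parseval's identity. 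Taking square roots then yields the asserted inequality (in fact with implicit constant equal to $1$).

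There is no genuine obstacle here: the only points deserving a word are the admissibility of the Hölder exponents, which holds exactly because $\sigma,\mu>0$ forces $b\in(0,1)$, and the treatment of the zero mode, which is harmless since it is excluded from the homogeneous norms and only contributes a non-negative term to $\|f\|_{L^2}^2$. One could alternatively phrase the same argument in physical space via the identity $\|f\|_{\dot H^\sigma}=\|\Lambda^\sigma f\|_{L^2}$ and a three-lines/complex-interpolation argument, but the Fourier-space computation above is the most direct route and generalises verbatim to the inhomogeneous norms if needed elsewhere.
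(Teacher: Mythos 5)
Your proof is correct and follows essentially the same route as the paper: working on the Fourier side, writing $|k|^{2\sigma}|\hat f(k)|^2$ as $\bigl(|k|^{2(\sigma+\mu)}|\hat f(k)|^2\bigr)^{b}\bigl(|\hat f(k)|^2\bigr)^{1-b}$, and applying Hölder with exponents $\frac1b$ and $\frac1{1-b}$. The only cosmetic difference is that you spell out the treatment of the $k=0$ mode, which the paper leaves implicit.
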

\begin{proof}
 We compute using Plancherel's identity and H\"older inequality with $p=\frac{1}{1-b}$
 \begin{align*}
  \|f\|^2_{\dot H^\sigma}&=\int|\Lambda^\sigma f|^2\,dx\approx \sum_k|k|^{2\sigma}|\hat f(k)|^2=\sum_k|\hat f(k)|^{2(1-b)}|k|^{2\sigma}|\hat f(k)|^{2b}\\
  &\le\left(\sum_k|\hat f(k)|^2\right)^{({1-b})}\left(\sum_k|k|^{2(\sigma+\mu)}|\hat f(k)|^{2}\right)^{b},
 \end{align*}
 where in the last step we used $\frac{\sigma}{b}=\sigma+\mu$.
\end{proof}
The following result is an immediate consequence of Plancherel's identity and Cauchy-Schwarz.
\begin{lemma}[Duality]\label{l:duality}
 Let $f,g\in C^\infty(\mathbb{T}^d)$ satisfy $\hat f(0)\hat g(0)=0.$ Then for $\sigma\in\mathbb{R}$
 \begin{align*}
  \int_{\mathbb{T}^d}f(x)g(x)\,dx\le\|f\|_{\dot H^\sigma}\|g\|_{\dot H^{-\sigma}}.
 \end{align*}
\end{lemma}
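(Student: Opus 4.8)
The plan is to expand the integral in Fourier series and apply Cauchy--Schwarz to the resulting sum. First I would invoke Parseval's identity to write
\[
\int_{\mathbb{T}^d} f(x)g(x)\,dx=\sum_{k\in\mathbb{Z}^d}\hat f(k)\,\hat g(-k),
\]
which follows from $\int_{\mathbb{T}^d}\mathrm{e}^{2\pi\mathrm{i}n\cdot x}\,dx=\delta_{n,0}$ applied to the product $f(x)g(x)=\sum_{k,m}\hat f(k)\hat g(m)\mathrm{e}^{2\pi\mathrm{i}(k+m)\cdot x}$. The hypothesis $\hat f(0)\hat g(0)=0$ ensures that the $k=0$ term in this sum vanishes, so it effectively runs over $k\in\mathbb{Z}^d\setminus\{0\}$, which is exactly the index set appearing in the homogeneous norms $\|\cdot\|_{\dot H^{\pm\sigma}}$. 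This is the only place the zero-mode assumption is used, and it is what allows the right-hand side to involve $\dot H^\sigma$ rather than $H^\sigma$ even when $\sigma\le 0$, where the weight $|k|^{2\sigma}$ is unbounded near $k=0$.

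Next I would split the weight symmetrically: for $k\neq 0$ write $\hat f(k)\hat g(-k)=\bigl(|k|^\sigma\hat f(k)\bigr)\bigl(|k|^{-\sigma}\hat g(-k)\bigr)$ and apply the Cauchy--Schwarz inequality in $\ell^2(\mathbb{Z}^d\setminus\{0\})$, obtaining
\[
\Bigl|\int_{\mathbb{T}^d} f(x)g(x)\,dx\Bigr|\le\Bigl(\sum_{k\neq 0}|k|^{2\sigma}|\hat f(k)|^2\Bigr)^{1/2}\Bigl(\sum_{k\neq 0}|k|^{-2\sigma}|\hat g(-k)|^2\Bigr)^{1/2}.
\]
A change of summation variable $k\mapsto -k$ in the second factor, together with $|-k|=|k|$, identifies it with $\|g\|_{\dot H^{-\sigma}}$, while the first factor equals $\|f\|_{\dot H^\sigma}$ by definition. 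Since the integral is bounded by its modulus, the claimed inequality follows a fortiori.

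There is no genuine obstacle here: the statement is a one-line consequence of the orthogonality of the Fourier basis and the Cauchy--Schwarz inequality, and the only point requiring a moment's care is that the vanishing of one of the two zeroth Fourier coefficients is precisely what makes the homogeneous norm on the right-hand side meaningful and the estimate correctly weighted.
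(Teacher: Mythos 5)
Your argument is correct and is exactly the one the paper has in mind: the text remarks only that the lemma ``is an immediate consequence of Plancherel's identity and Cauchy--Schwarz,'' and your proof makes that precise by expanding $\int fg$ as $\sum_k\hat f(k)\hat g(-k)$, using $\hat f(0)\hat g(0)=0$ to drop the $k=0$ mode, splitting the weight $|k|^{\sigma}|k|^{-\sigma}$, and applying Cauchy--Schwarz. The observations you flag---that the zero-mode hypothesis is what makes the homogeneous norms finite and correctly weighted, and that $k\mapsto -k$ identifies the second factor with $\|g\|_{\dot H^{-\sigma}}$---are the right points of care and match the intended reasoning.
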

In our analysis we will frequently use the following product rule estimate (also known as Kato-Ponce inequality) combined with the subsequently stated Sobolev embedding for fractional derivatives.
\begin{lemma}[Fractional product rule estimate]\label{l:product} Let $\sigma\ge0$ be given.
Then for all $p_i,q_i\in(2,\infty)$ with $\frac{1}{2}=\frac{1}{p_i}+\frac{1}{q_i}$, $i=1,2$ the bound
\begin{equation}
 \|\Lambda^\sigma (fg)\|_{L^2}\lesssim\|\Lambda^\sigma f\|_{L^{p_1}}\|g\|_{L^{q_1}}+\|f\|_{L^{p_2}}\|\Lambda^\sigma  g\|_{L^{q_2}}
\end{equation} holds true.
\end{lemma}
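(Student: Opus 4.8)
The statement is the classical Kato--Ponce (fractional Leibniz) inequality, and the plan is to prove it by a Littlewood--Paley paraproduct decomposition on $\mathbb{T}^d$; alternatively one could transfer the corresponding $\mathbb{R}^d$ estimate (e.g.\ Grafakos--Oh) by a periodisation argument, but I will sketch the direct proof. The case $\sigma=0$ is immediate from Hölder's inequality, $\|fg\|_{L^2}\le\|f\|_{L^{p_1}}\|g\|_{L^{q_1}}$, so assume $\sigma>0$. Fix smooth inhomogeneous dyadic Fourier projections $P_j$, $j\ge0$ (symbol supported in $|k|\sim2^j$ for $j\ge1$, in $|k|\lesssim1$ for $j=0$), with partial sums $S_j=\sum_{j'\le j}P_{j'}$, and use the Bony decomposition $fg=T_gf+T_fg+R(f,g)$, with high--low paraproduct $T_gf=\sum_j(S_{j-2}g)(P_jf)$, low--high paraproduct $T_fg$, and resonant part $R(f,g)=\sum_{|j-k|\le2}(P_jf)(P_kg)$; the two paraproducts will be controlled by the respective terms on the right-hand side, the resonant part by either.

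For the high--low term each summand $(S_{j-2}g)(P_jf)$ has frequency support in a fixed dilate of the annulus $|k|\sim2^j$, so that $\Lambda^\sigma$ acts there as $\sim2^{j\sigma}$ and, by almost-orthogonality in $L^2$, $\|\Lambda^\sigma T_gf\|_{L^2}^2\lesssim\sum_j2^{2j\sigma}\|(S_{j-2}g)(P_jf)\|_{L^2}^2$. Rather than applying Hölder termwise I would first pass to the pointwise bound $\sum_j2^{2j\sigma}|S_{j-2}g|^2|P_jf|^2\le\big(\sup_j|S_{j-2}g|\big)^2\,G_\sigma(f)^2\lesssim(Mg)^2G_\sigma(f)^2$, where $G_\sigma(f)=(\sum_j2^{2j\sigma}|P_jf|^2)^{1/2}$ and $M$ is the Hardy--Littlewood maximal function (using $\sup_j|S_{j-2}g|\lesssim Mg$), and then apply Hölder with exponents $q_1/2,p_1/2$ together with the $L^{q_1}$-boundedness of $M$ and the Littlewood--Paley characterisation $\|G_\sigma(f)\|_{L^{p_1}}\sim\|\Lambda^\sigma f\|_{L^{p_1}}$ of the fractional Sobolev norm on $\mathbb{T}^d$. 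This is exactly the point where the hypotheses $p_i,q_i\in(2,\infty)$ (so that $1<p_i,q_i<\infty$) enter. The outcome is $\|\Lambda^\sigma T_gf\|_{L^2}\lesssim\|\Lambda^\sigma f\|_{L^{p_1}}\|g\|_{L^{q_1}}$, and by symmetry $\|\Lambda^\sigma T_fg\|_{L^2}\lesssim\|f\|_{L^{p_2}}\|\Lambda^\sigma g\|_{L^{q_2}}$.

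The resonant term is the main obstacle: a product $(P_jf)(\tilde P_jg)$ of two frequency-$2^j$ pieces (with $\tilde P_j=\sum_{|k-j|\le2}P_k$) is only localised to $\{|k|\lesssim2^j\}$, so there is neither annular localisation nor orthogonality in $j$. I would handle it by decomposing in the \emph{output} frequency: writing $\Lambda^\sigma=\sum_l P_l\Lambda^\sigma$ and noting that $P_l\Lambda^\sigma\big((P_jf)(\tilde P_jg)\big)$ vanishes unless $2^j\gtrsim2^l$, orthogonality in $l$ gives $\|\Lambda^\sigma R(f,g)\|_{L^2}^2\lesssim\sum_l2^{2l\sigma}\big(\sum_{j\gtrsim l}\|(P_jf)(\tilde P_jg)\|_{L^2}\big)^2$; a Cauchy--Schwarz in $j$ (summing the geometric series $\sum_{j\gtrsim l}2^{-j\sigma}$, which is where $\sigma>0$ is used) followed by swapping the $l$- and $j$-summations bounds this by $\sum_j2^{2j\sigma}\|(P_jf)(\tilde P_jg)\|_{L^2}^2$, and the same maximal-function/square-function Hölder argument as above (now with $\sup_j|\tilde P_jg|\lesssim Mg$) controls it by $\|\Lambda^\sigma f\|_{L^{p_1}}\|g\|_{L^{q_1}}$. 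Summing the three contributions yields the claim. Apart from the resonant-sum bookkeeping, the only inputs that are not elementary are the periodic Littlewood--Paley square-function equivalence for $\|\Lambda^\sigma\cdot\|_{L^p}$ and the $L^p$-boundedness of $M$, both standard for $1<p<\infty$.
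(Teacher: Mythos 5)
Your proof is correct, but it takes a genuinely different route from the paper: the paper does not prove the lemma at all, simply citing Grafakos--Oh for the whole-space case and Constantin--Glatt-Holtz--Vicol for the periodic case, whereas you give a self-contained paraproduct proof. The mechanism you use (Bony decomposition; for each piece, pass to the pointwise bound $\sup_j|S_{j-2}g|\lesssim Mg$ and the weighted square function $G_\sigma(f)$, then H\"older with exponents $p_1/2,q_1/2$, $L^{q_1}$-boundedness of $M$, and the Littlewood--Paley characterisation of $\|\Lambda^\sigma\cdot\|_{L^{p_1}}$) is the standard one, and your treatment of the resonant piece -- reorganising in the output frequency $l$, Cauchy--Schwarz in $j$ with the weight $2^{-j\sigma}$, then swapping $\sum_l\sum_{j\gtrsim l}$, using $\sigma>0$ for both geometric series -- is correct and gives exactly $\sum_j2^{2j\sigma}\|(P_jf)(\tilde P_jg)\|_{L^2}^2$ as you state. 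The only wrinkle your sketch elides is the very-low-frequency piece on $\mathbb{T}^d$: the operator $\Lambda^\sigma$ of the paper annihilates the $k=0$ mode, so the equivalence $\|G_\sigma(f)\|_{L^p}\sim\|\Lambda^\sigma f\|_{L^p}$ as written can fail on constants. This is harmless -- split $f=\hat f(0)+\tilde f$, $g=\hat g(0)+\tilde g$, run your argument on the mean-zero parts with the $j\ge1$ square function, and absorb the cross terms $\hat f(0)\Lambda^\sigma \tilde g$, $\hat g(0)\Lambda^\sigma\tilde f$ by H\"older on the compact torus since $q_2,p_1>2$ -- but it is worth saying explicitly. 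What each approach buys: the paper's citation is shortest; your direct proof exposes exactly where the hypotheses $p_i,q_i\in(1,\infty)$ and $\sigma\ge0$ are used, and avoids having to check that the cited results cover the precise torus/homogeneous-$\Lambda^\sigma$ formulation the paper needs.
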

\begin{proof}
For the whole space this is a special case of e.g.~\cite{grafakos_kato-ponce_2014}. In the case of the torus, we refer to~\cite{constantin_unique_2014} and references therein.
\end{proof}

\begin{lemma}[Homogeneous Sobolev embedding]\label{l:Sobolev}
 Assume $0<\frac{\sigma}{d}<\frac{1}{p}<1$ and define $q\in(p,\infty)$ via
 \begin{equation*}
 \frac{\sigma}{d}=\frac{1}{p}-\frac{1}{q}.
 \end{equation*}
 Then for all $f\in C^\infty(\mathbb{T}^d)$ with zero mean
\begin{equation}
 \|f\|_{L^q(\mathbb{T}^d)}\lesssim\|\Lambda^\sigma f\|_{L^p(\mathbb{T}^d)}.
\end{equation} 
\end{lemma}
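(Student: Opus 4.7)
The plan is to reduce the periodic statement to the classical Hardy--Littlewood--Sobolev (HLS) inequality on $\mathbb{R}^d$. Since $f$ has zero mean, the Fourier multiplier $\Lambda^{-\sigma}$ (with the convention that it kills the $k=0$ mode) is well defined on $f$, and we may write
\begin{equation*}
 f = K_\sigma * g, \qquad g := \Lambda^\sigma f,
\end{equation*}
where $K_\sigma$ is the periodic distribution with Fourier coefficients $|k|^{-\sigma}$ for $k\neq 0$ and $0$ for $k=0$.

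First I would show that on the fundamental domain $\mathbb{T}^d$ the kernel $K_\sigma$ decomposes as
\begin{equation*}
 K_\sigma(x) = c_{d,\sigma}|x|^{\sigma-d} + \rho(x),
\end{equation*}
where $\rho$ is bounded (in fact smooth away from $0$). This is a standard computation: one checks via Poisson summation (or by comparing the Fourier series representation with the integral $\int_{\mathbb{R}^d}|\xi|^{-\sigma}e^{2\pi i x\cdot\xi}d\xi$, which equals a constant multiple of $|x|^{\sigma-d}$ in the sense of tempered distributions) that the periodic kernel differs from the Euclidean Riesz kernel, restricted to $\mathbb{T}^d$, by a smooth function. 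The condition $0<\sigma/d<1/p$ ensures $\sigma-d<0$ so that the singular part is locally integrable.

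Next I would estimate the two contributions separately. For the singular part, extending $g$ by zero outside $\mathbb{T}^d$ and using the HLS inequality on $\mathbb{R}^d$ with exponents $(p,q)$ related by $1/q=1/p-\sigma/d$, one obtains
\begin{equation*}
 \bigl\| |\cdot|^{\sigma-d} * g\bigr\|_{L^q(\mathbb{T}^d)} \lesssim \|g\|_{L^p(\mathbb{T}^d)}.
\end{equation*}
For the smooth remainder, since $\rho\in L^r(\mathbb{T}^d)$ for every $r\in[1,\infty]$, Young's convolution inequality with $1+1/q=1/r+1/p$ gives $\|\rho * g\|_{L^q}\lesssim \|g\|_{L^p}$. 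Combining yields the claim.

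The main obstacle is the kernel decomposition step, which requires some care. An alternative, arguably cleaner route would bypass this entirely via Littlewood--Paley theory: Bernstein's inequality for the frequency-localised blocks $\Delta_j f$ gives $\|\Delta_j f\|_{L^q}\lesssim 2^{j d(1/p-1/q)}\|\Delta_j f\|_{L^p}=2^{j\sigma}\|\Delta_j f\|_{L^p}$, and summing through the Littlewood--Paley square function characterisation of $L^p$ and $L^q$ norms (valid for $1<p,q<\infty$) produces the estimate in one step. Either route is routine modulo standard tools; the essential point is that the hypothesis $0<\sigma/d<1/p<1$ places the pair $(p,q)$ in the admissible HLS range.
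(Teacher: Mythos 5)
Your main argument is correct and is a genuine proof, whereas the paper itself gives none: Lemma~\ref{l:Sobolev} is disposed of by citing B\'enyi--Oh, whose purpose is precisely to give a \emph{direct} Fourier-analytic proof on the torus that avoids the classical transference you carry out. Your route -- writing $f=K_\sigma\ast\Lambda^\sigma f$ (legitimate since $\hat f(0)=0$), splitting the periodic Riesz kernel as $c_{d,\sigma}|x|^{\sigma-d}$ plus a bounded remainder (e.g.\ via the heat-kernel/Poisson-summation representation of $\sum_{k\neq0}|k|^{-\sigma}e^{2\pi i k\cdot x}$, using $\sigma<d$ which follows from $\sigma/d<1/p<1$), and then invoking Hardy--Littlewood--Sobolev for the singular part and Young for the remainder -- is the standard transference argument and covers the full range $1<p<q<\infty$, which matters because the paper applies the lemma both with $p=2$ (Section~\ref{sec:apriori}) and with $p>\frac d2>2$ (Theorem~\ref{thm:suppKS}). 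The only step needing a sentence more of care is the passage to HLS on $\mathbb{R}^d$: the torus convolution should be compared with a Euclidean one after cutting the singular part off near the origin (absorbing the rest into the bounded remainder) or after extending $g$ periodically to a fixed larger box rather than merely by zero; this is routine bookkeeping, and you flag the decomposition as the delicate point.

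One caveat on your ``cleaner'' alternative: blockwise Bernstein, $\|\Delta_j f\|_{L^q}\lesssim 2^{j\sigma}\|\Delta_j f\|_{L^p}$, does \emph{not} combine with the square-function characterisations ``in one step'' for general exponents. What it gives directly is the Besov embedding $\dot B^{\sigma}_{p,r}\hookrightarrow \dot B^{0}_{q,r}$; to reach the stated $L^p$-based inequality you then need $L^p\hookrightarrow \dot B^0_{p,2}$ and $\dot B^0_{q,2}\hookrightarrow L^q$, which hold only for $p\le 2\le q$. For $p>2$ (again, the regime used in Theorem~\ref{thm:suppKS}) one needs the Triebel--Lizorkin embedding $\dot F^{\sigma}_{p,2}\hookrightarrow \dot F^{0}_{q,2}$, or a weak-type estimate from $\dot B^{\sigma}_{p,\infty}$ plus Marcinkiewicz interpolation -- both of which amount to re-proving fractional integration rather than avoiding it. So keep the HLS/kernel route as the actual proof and regard the Littlewood--Paley sketch as complete only in the range $p\le2\le q$.
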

\begin{proof} See~\cite{benyi_sobolev_2013} for a direct Fourier analytic proof on the torus.
\end{proof}

\section{\texorpdfstring{$L^2$}{} a priori estimates}\label{sec:apriori}
In this section we will establish a priori estimates for the evolution equation
\begin{subequations}\label{eq}
\begin{align}
  \partial_t\rho+u\cdot\nabla\rho&=-\Lambda^\gamma\rho+\nabla\cdot(\rho\nabla K\ast\rho) \;\text{ in }\;(0,\infty)\times\mathbb{T}^d,\\
  \rho(0)&=\rho_0,
\end{align}
\end{subequations} 
where $u=u(x)$ is a given smooth divergence-free vector field and $\rho_0$ a non-negative initial datum. Clearly, the conservation of mean property, preservation of positivity, LWP and the smoothing effects for the local solution mentioned in the introduction remain valid for problem~\eqref{eq}. Strictly speaking, a thorough elaboration of the local well-posedness theory in $L^2$ and the smoothing properties would imply some (albeit possibly weaker) version of the results derived in this section. 
We nevertheless include this part, mainly because the derived lemmata will be used explicitly in and will facilitate the presentation of the proof of our first \say{blowup prevention theorem} (Theorem~\ref{thm:suppression}). 

To simplify the exposition, we will prove the following results only in the (more interesting) cases $\gamma\le2$ and $2+a-\frac{d}{2}\ge1$. It is not difficult to see that they remain valid if $\gamma>2$ or $2+a-\frac{d}{2}<1$. 

\subsection{A blowup criterion}\label{sec:criterion}
Here we illustrate by a formal derivation that a form of the standard blowup/continuation criteria for several classical aggregation equations (including the Keller-Segel model) is also valid for our problem.

\begin{theorem}[$L^2$-control suffices]\label{thm:criterion} Assume  $\gamma>\max\{2+a-\frac{d}{2},1\}$ and let\footnote{Recall that thanks to the assumed lower bound on $\gamma$, by the smoothing properties of~\eqref{eq}, the assumption of smooth initial data can be removed, and the statement, mutatis mutandis, is valid for $L^2$ data.} 
$\rho_0\in C^\infty(\mathbb{T}^d)$. Then the following criterion holds: 
either the local solution $\rho$ to~\eqref{eq} extends to a global smooth solution or there exists $T^*\in(0,\infty)$ and $1\le r<\infty$ such that 
\begin{equation*}
  \int_0^t \|\rho(\tau)-\bar\rho\|_{L^2}^r\,d\tau\to\infty\text{ as }t\nearrow T^*.
\end{equation*} 
\end{theorem}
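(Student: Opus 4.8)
The plan is to run a high-regularity energy estimate for~\eqref{eq} in $H^s$ with $s\ge s_0(d)$ large, and to show that the only way the $H^s$-norm can blow up at a finite time $T^*$ is if the time integral of a suitable power of $\|\rho-\bar\rho\|_{L^2}$ diverges; equivalently, if that integral stays finite then $\|\rho(t)\|_{H^s}$ stays bounded on $[0,T^*)$ and the local solution can be continued past $T^*$. Since the advection term $u\cdot\nabla\rho$ is the transport by a fixed smooth divergence-free field, it will contribute harmlessly: after differentiating, commutator estimates (Kato--Ponce, Lemma~\ref{l:product}) bound the $u$-terms by $C(\|u\|_{C^{s+1}})\|\rho\|_{H^s}^2$, and the incompressibility kills the top-order piece $\int (u\cdot\nabla\Lambda^s\rho)\Lambda^s\rho=0$. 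The genuine work is in the nonlinear aggregation term $\nabla\cdot(\rho\,\nabla K\ast\rho)$.

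The key steps, in order, are: (1) fix $s$ large and compute $\tfrac{d}{dt}\tfrac12\|\rho-\bar\rho\|_{\dot H^s}^2$, producing the good dissipation term $-\|\rho-\bar\rho\|_{\dot H^{s+\gamma/2}}^2$ from $-\Lambda^\gamma\rho$; (2) write the nonlinear contribution as $\int \Lambda^s\nabla\cdot(\rho\,\nabla K\ast\rho)\,\Lambda^s(\rho-\bar\rho)$, integrate by parts to move one derivative onto the test function, and estimate $\|\Lambda^s(\rho\,\nabla K\ast\rho)\|_{L^2}$ by the fractional product rule (Lemma~\ref{l:product}), splitting into the term with all derivatives on $\rho$ and the term with derivatives on $\nabla K\ast\rho$; (3) in each piece use that $\nabla K\ast\rho$ gains regularity from $\rho$ — indeed $\Lambda^\sigma(\nabla K\ast\rho)=\nabla K\ast\Lambda^\sigma\rho$, and near the origin $\nabla K(x)=x/|x|^{2+a}$ so $\nabla K\in L^{p_0}$ for some $p_0>1$ by~\eqref{gradLP}, while away from the origin $K$ is smooth — hence convolution with $\nabla K$ maps $L^q\to L^\infty$ for suitable $q$ (Young's inequality) or, better, $\dot H^\sigma\to$ a higher-integrability space via Lemma~\ref{l:Sobolev}; (4) interpolate all the intermediate norms of $\rho-\bar\rho$ that appear between $\|\rho-\bar\rho\|_{L^2}$ and $\|\rho-\bar\rho\|_{\dot H^{s+\gamma/2}}$ using Lemma~\ref{l:interpol}, arranging the exponents so that the top-order factor enters with a power $<2$ and can be absorbed into the dissipation by Young's inequality $xy\le \tfrac12\varepsilon x^2+C_\varepsilon y^2$; the leftover is $C\big(1+\|\rho-\bar\rho\|_{L^2}\big)^r\|\rho-\bar\rho\|_{\dot H^s}^2$ for some finite $r=r(\gamma,a,d)\ge1$. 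Here the hypothesis $\gamma>\max\{2+a-\tfrac d2,1\}$ is exactly what makes $L^2$ a (heuristically) subcritical quantity, which is what allows the absorption to close with only an $L^2$-power in front.

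Combining, one arrives at a differential inequality of the form
\begin{equation*}
  \frac{d}{dt}\|\rho(t)-\bar\rho\|_{\dot H^s}^2 \le C\big(\|u\|_{C^{s+1}}\big)\Big(1+\|\rho(t)-\bar\rho\|_{L^2}^r\Big)\,\|\rho(t)-\bar\rho\|_{\dot H^s}^2,
\end{equation*}
whence, by Grönwall, $\|\rho(t)-\bar\rho\|_{\dot H^s}^2\le \|\rho_0-\bar\rho\|_{\dot H^s}^2\exp\!\big(C\int_0^t(1+\|\rho(\tau)-\bar\rho\|_{L^2}^r)\,d\tau\big)$; adding back the conserved mean to pass from $\dot H^s$ to $H^s$, the $H^s$-norm remains finite as long as $\int_0^t\|\rho(\tau)-\bar\rho\|_{L^2}^r\,d\tau<\infty$. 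The standard continuation criterion in $H^s$ (from the LWP theory referenced in the introduction) then yields the dichotomy as stated. The main obstacle is step~(3)--(4): one must check that the singular kernel $\nabla K\sim x/|x|^{2+a}$ is mild enough — precisely, that $d/(1+a)>1$ so $\nabla K\in L^{p_0}$, $p_0>1$, together with Sobolev embedding — to let $\nabla K\ast(\cdot)$ absorb enough derivatives that only a finite power of the low norm $\|\rho-\bar\rho\|_{L^2}$ survives in front of $\|\rho-\bar\rho\|_{\dot H^s}^2$; getting the interpolation bookkeeping to leave the top-order factor with exponent strictly below $2$ is where the lower bound on $\gamma$ is used and is the crux of the estimate.
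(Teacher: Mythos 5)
Your plan coincides with the paper's on every conceptual point: high-regularity $\dot H^s$ energy estimate, the commutator trick for the $u$-transport, Kato--Ponce for $\rho\,\nabla K\ast\rho$, Young's convolution inequality with $\nabla K\in L^{p_0}$ ($p_0<\tfrac{d}{1+a}$), Sobolev plus interpolation between $L^2$ and $\dot H^{s+\gamma/2}$, and Young's inequality to absorb the top-order factor into the dissipation. The final differential inequality you write (multiplicative $\|\rho-\bar\rho\|_{L^2}^r\|\rho-\bar\rho\|_{\dot H^s}^2$) is cosmetically different from the paper's (additive $\|\rho-\bar\rho\|_{L^2}^r$), but both feed the same Gr\"onwall conclusion, so that discrepancy is immaterial.

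There is, however, a genuine gap in your step (2). You move one derivative onto the test function and then estimate $\big|\int\Lambda^s(\rho\,\nabla K\ast\rho)\cdot\nabla\Lambda^s\rho\big|$ by Cauchy--Schwarz in $L^2\times L^2$, which produces the factor $\|\nabla\Lambda^s\rho\|_{L^2}=\|\rho\|_{\dot H^{s+1}}$. But the dissipation only provides $\|\rho\|_{\dot H^{s+\gamma/2}}^2$, and for $\gamma<2$ (precisely the interesting range covered by the theorem) one has $s+1>s+\tfrac{\gamma}{2}$; the norm $\|\rho\|_{\dot H^{s+1}}$ is strictly higher order and cannot be interpolated below $\dot H^{s+\gamma/2}$, so the absorption step in your (4) cannot close. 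The necessary fix --- and what the paper does, via Lemma~\ref{l:duality} --- is to pair in the fractional duality $\dot H^{1-\gamma/2}\times\dot H^{\gamma/2-1}$ instead of $L^2\times L^2$. This yields the bound
\begin{equation*}
\Big|\int\Lambda^s(\rho\,\nabla K\ast\rho)\cdot\nabla\Lambda^s\rho\,dx\Big|
\lesssim \|\Lambda^{s+1-\frac{\gamma}{2}}(\rho\,\nabla K\ast\rho)\|_{L^2}\,\|\rho\|_{\dot H^{s+\frac{\gamma}{2}}},
\end{equation*}
so that the $\rho$-factor is exactly the dissipation norm (absorbable), while the product-rule factor now carries only $s+1-\tfrac{\gamma}{2}$ derivatives. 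With this corrected duality, your steps (3)--(4) go through; in particular the hypothesis $\gamma>2+a-\tfrac d2$ then guarantees $b_1+b_2<1$, which is the exponent count that makes the Young absorption work. Without the fractional duality, the argument simply does not close for $\gamma\in(1,2)$.
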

\begin{proof}[Proof of Theorem~\ref{thm:criterion} (for $\gamma\le2,\;\;2+a-\frac{d}{2}\ge1$).] 
It suffices to derive a priori bounds on higher order derivatives in terms of $L^2$, the rest of the argument then follows as in~\cite[Appendix I]{kiselev_biomixing_2012_enhancement}. Let $s\ge s_0(d)$ be a sufficiently large integer.  Then we estimate as in the proof of~\cite[Theorem 2.1]{kiselev_suppression_2016} 
\begin{align}\label{s-test}
 \frac{1}{2}\frac{d}{dt}\|\rho\|_{\dot H^s}^2\le -\|\rho\|_{\dot H^{s+\frac{\gamma}{2}}}^2+C\|u\|_{C^s}\|\rho\|_{\dot H^s}^2+\left|\int\nabla\cdot(\rho\nabla K\ast\rho)(-\triangle)^s\rho\right|.
\end{align}
The last term on the RHS is estimated using Lemmata~\ref{l:duality} and~\ref{l:product}
\begin{align}\label{eq:l-est}
 &\left|\int \Lambda^s(\rho\nabla K\ast\rho)\cdot\nabla\Lambda^{s}\rho\,dx\right| \nonumber
 \lesssim\|\Lambda^s(\rho\nabla K\ast\rho)\|_{\dot H^{1-\frac{\gamma}{2}}}\|\nabla\Lambda^{s}\rho\|_{\dot H^{-1+\frac{\gamma}{2}}}
 \\&\hspace{2cm}\lesssim \left(\|\Lambda^{s+1-\frac{\gamma}{2}}\rho\|_{L^{p_1}}\|\nabla K\ast\rho\|_{L^{q_1}}+\|\rho\|_{L^{p_2}}\|\nabla K\ast\Lambda^{s+1-\frac{\gamma}{2}}\rho\|_{L^{q_2}}\right)\|\rho\|_{\dot H^{s+\frac{\gamma}{2}}}.
\end{align}
This is valid for $p_i,q_i\in(2,\infty)$ whenever $\frac{1}{p_i}+\frac{1}{q_i}=\frac{1}{2}$ for $i=1,2$.
In the following we estimate the terms on the RHS of~\eqref{eq:l-est}.
We first choose $p_1=2+\varepsilon$ for $\varepsilon>0$ sufficiently small such that for $\sigma_1=\left(\frac{1}{2}-\frac{1}{p_1}\right)d$ we have $b_1:=\frac{\sigma_1+s+1-\frac{\gamma}{2}}{s+\frac{\gamma}{2}}<1$. This is possible since $\gamma>1$. Thus, using Lemmata \ref{l:Sobolev} and \ref{l:interpol}, we find  
\begin{align*}
  \|\Lambda^{s+1-\frac{\gamma}{2}}\rho\|_{L^{p_1}}\le C\|\rho\|_{\dot H^{\sigma_1+s+1-\frac{\gamma}{2}}}\le C\|\rho-\bar\rho\|_{L^2}^{1-b_1}\|\rho\|_{\dot H^{s+\frac{\gamma}{2}}}^{b_1}.
\end{align*} 
Next, we apply Young's convolution inequality with suitable exponents $p_0,q_3\in(1,\infty)$ satisfying $1+\frac{1}{q_1}=\frac{1}{p_0}+\frac{1}{q_3}$. More precisely, we choose $p_0=\frac{d}{1+a}(1-\delta)$ for $\delta>0$ sufficiently small. 
Note that if $2+a-\frac{d}{2}\ge1$, then $\frac{d}{1+a}\le2$ so that for $\varepsilon>0$ sufficiently small $q_3\ge2$. And clearly, for $s\ge s_0(d)$ sufficiently large we have
 $b_2:=\frac{(\frac{1}{2}-\frac{1}{q_3})d}{s+\frac{\gamma}{2}}<1.$ 
 Thus,
 \begin{align*}
  \|\nabla K\ast\rho\|_{L^{q_1}}= \|\nabla K\ast(\rho-\bar\rho)\|_{L^{q_1}}&\le \|\nabla K\|_{L^{p_0}}\|\rho-\bar\rho\|_{L^{q_3}}
  \\&\le C\|\nabla K\|_{L^{p_0}}\|\rho-\bar\rho\|_{L^2}^{1-b_2}\|\rho\|_{\dot H^{s+\frac{\gamma}{2}}}^{b_2}.
\end{align*} 
We note that 
\begin{align}\label{eq:b1b2sum}
 b_1+b_2&=\frac{\left(\frac{1}{2}-\frac{1}{p_1}\right)d+s+1-\frac{\gamma}{2}+(\frac{1}{2}-\frac{1}{q_3})d}{s+\frac{\gamma}{2}}\nonumber
 \\&=\frac{\left(\frac{1}{p_0}-\frac{1}{2}\right)d+s+1-\frac{\gamma}{2}}{s+\frac{\gamma}{2}}\nonumber
 \\&=\frac{s+\frac{\gamma}{2}-\gamma-\frac{d}{2}+\frac{d}{p_0}+1}{s+\frac{\gamma}{2}}.
\end{align}
Since $\gamma>2+a-\frac{d}{2}$, the term $-\gamma-\frac{d}{2}+1+\frac{d}{p_0}$ is strictly negative if $\delta>0$ is chosen sufficiently small. Then the strict inequality $b_1+b_2<1$ holds. \\
The terms $\|\rho\|_{L^{p_2}}$ and $\|\nabla K\ast\Lambda^{s+1-\frac{\gamma}{2}}\rho\|_{L^{q_2}}$ are treated similarly and yield bounds with only minor differences (see the proof of Lemma~\ref{local-control}).

Inserting the derived bounds into \eqref{s-test}, applying Young's inequality twice -- once with the exponent $\tilde p=\frac{2}{b_1+b_2+1}>1$ applied to the factor involving the highest power
of $\|\rho\|_{\dot H^{s+\frac{\gamma}{2}}}$ -- we obtain, after absorption, a bound of the form 
\begin{align*}
 \frac{1}{2}\frac{d}{dt}\|\rho\|_{\dot H^s}^2\le -\frac{1}{2}\|\rho\|_{\dot H^{s+\frac{\gamma}{2}}}^2+C\|u\|_{C^s}\|\rho\|_{\dot H^s}^2+C\|\rho-\bar\rho\|_{L^2}^r+C(\bar\rho)
\end{align*}
for some possibly large $r\in(1,\infty)$, $r=r(a,d,\gamma,s)$. From this estimate the conclusion can easily be deduced. 
\end{proof}

\subsection{Local control}\label{sec:local-control}
We now prove that solutions are locally controlled in $L^2(\mathbb{T}^d)$ for some time which only depends on the $L^2$-distance to the mean, the mean value and model parameters.
\begin{lemma}[Local $L^2$-control]\label{local-control}
  Suppose  $\gamma>\max\{2+a-\frac{d}{2},1\}$ and let $\rho\ge0$ be a smooth (local) solution to \eqref{eq}. Assume that 
  $\|\rho(t_0)-\bar\rho\|_{L^2}=B>0$ for some $t_0\ge0.$ Then 
  \begin{equation*}
   \|\rho(t_0+\tau)-\bar\rho\|_{L^2}\le 2B\text{ for all }0\le\tau\le\tau_0,
  \end{equation*}
  where  
  \begin{align}\label{eq:timespan}
   \tau_0=C_1(\|\nabla K\|_{L^{p_0}})^{-1}\min\left\{B^{-r_1},\bar\rho^{-r_2}\right\}>0
  \end{align} 
 for some\footnote{Recall that hypothesis~\eqref{gradLP} ensures $1<\frac{d}{a+1}$.} sufficiently large $1<p_0<\frac{d}{1+a}$, a non-decreasing function $C_1(\dots)>0$ and positive (possibly large) constants $r_i>0$ which only depend on $\gamma, d, a$ and the choice of $p_0$.
\end{lemma}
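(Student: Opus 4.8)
The plan is to derive the standard $L^2$ energy inequality for $g:=\rho-\bar\rho$, convert it into an autonomous differential inequality for $y:=\|g\|_{L^2}^2$ of the form $y'\le C_1(\|\nabla K\|_{L^{p_0}})\,(y^{1+r_1/2}+\bar\rho^{r_2}y)$, and close the estimate by a soft continuity (bootstrap) argument. As announced in this section I only treat $\gamma\le2$ and $2+a-\tfrac d2\ge1$. First I would test \eqref{eq} with $g$: since $\partial_t\bar\rho=\nabla\bar\rho=\Lambda^\gamma\bar\rho=0$ and $\widehat{\nabla K}(0)=0$ (so that $\nabla K\ast\rho=\nabla K\ast g$), and since $\nabla\cdot u=0$ kills the transport contribution $\int(u\cdot\nabla g)g=0$ identically, one obtains
\[
\tfrac12\,\tfrac{d}{dt}\|g\|_{L^2}^2=-\|g\|_{\dot H^{\gamma/2}}^2-\int_{\mathbb{T}^d}\rho\,(\nabla K\ast g)\cdot\nabla g\,dx;
\]
in particular the bound on $\tau_0$ will not involve $u$.

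The heart of the matter is to bound $\big|\int\rho\,(\nabla K\ast g)\cdot\nabla g\,dx\big|$ by $\tfrac12\|g\|_{\dot H^{\gamma/2}}^2$ plus a ``good'' remainder, and for this I would reproduce the chain of estimates from the proof of Theorem~\ref{thm:criterion}, now at base regularity ($s=0$). By Lemma~\ref{l:duality} (legitimate since $\widehat{\nabla g}(0)=0$, and $0\le1-\tfrac\gamma2<\tfrac12$) the integral is $\lesssim\|\rho\,(\nabla K\ast g)\|_{\dot H^{1-\gamma/2}}\|g\|_{\dot H^{\gamma/2}}$. Writing $\rho=g+\bar\rho$, the $\bar\rho$-part is controlled directly via Young's convolution inequality ($\|\nabla K\ast g\|_{\dot H^{1-\gamma/2}}\le\|\nabla K\|_{L^1}\|g\|_{\dot H^{1-\gamma/2}}$, recalling $1+a<d$) and Lemma~\ref{l:interpol}, giving a term $\lesssim\bar\rho\|\nabla K\|_{L^1}\|g\|_{L^2}^{2(\gamma-1)/\gamma}\|g\|_{\dot H^{\gamma/2}}^{2/\gamma}$; the $g$-part is handled by the Kato--Ponce inequality (Lemma~\ref{l:product}) with $\sigma=1-\tfrac\gamma2$, then the homogeneous Sobolev embedding (Lemma~\ref{l:Sobolev}) to convert the $L^{p_i}$-norms of fractional derivatives of $g$ into homogeneous Sobolev norms $\|g\|_{\dot H^\bullet}$, Young's convolution inequality with $\nabla K\in L^{p_0}$ for $p_0=\tfrac d{1+a}(1-\delta)$, and Lemma~\ref{l:interpol} between $L^2$ and $\dot H^{\gamma/2}$. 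Choosing the Hölder exponents $p_i,q_i\in(2,\infty)$ exactly as in Theorem~\ref{thm:criterion} produces a term $\lesssim\|\nabla K\|_{L^{p_0}}\|g\|_{L^2}^{2-b_1-b_2}\|g\|_{\dot H^{\gamma/2}}^{1+b_1+b_2}$, with $b_1+b_2=(\tfrac d{p_0}-\tfrac d2+1)/\tfrac\gamma2-1$ computed exactly as in \eqref{eq:b1b2sum} with $s=0$.

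The decisive point is that the hypothesis $\gamma>2+a-\tfrac d2$ lets us pick $\delta>0$ small enough that $\tfrac d{p_0}-\tfrac d2+1<\gamma$, hence $b_1+b_2<1$; together with $\tfrac2\gamma<2$ (from $\gamma>1$) this allows Young's inequality to split off, in each of the two contributions, a factor $\|g\|_{\dot H^{\gamma/2}}^2$ which is absorbed into the dissipation. Since every nonlinear contribution carries at least two factors of $g$, no purely $\bar\rho$-dependent term survives; using $\|\nabla K\|_{L^1}\lesssim\|\nabla K\|_{L^{p_0}}$ on $\mathbb{T}^d$ one arrives at
\[
\tfrac{d}{dt}\,y\le C_1(\|\nabla K\|_{L^{p_0}})\,\big(y^{1+r_1/2}+\bar\rho^{r_2}y\big),\qquad y:=\|\rho-\bar\rho\|_{L^2}^2,
\]
with $r_1=\tfrac2{1-b_1-b_2}$ and $r_2=\tfrac\gamma{\gamma-1}$ (or comparable exponents), both depending only on $\gamma,d,a,p_0$, and $C_1$ a non-decreasing function of $\|\nabla K\|_{L^{p_0}}$.

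To conclude, let $\mathcal T$ be the set of times $t$ in the maximal interval of existence for which $\|\rho(\cdot)-\bar\rho\|_{L^2}\le2B$ holds throughout $[t_0,t]$; it is non-empty and relatively closed. On $\mathcal T$ the right-hand side above is $\le8C_1(\|\nabla K\|_{L^{p_0}})B^2\max\{(4B^2)^{r_1/2},\bar\rho^{r_2}\}$, so integrating from $t_0$ yields $y(t)\le B^2+8C_1B^2\max\{(4B^2)^{r_1/2},\bar\rho^{r_2}\}(t-t_0)$; choosing $\tau_0$ of the form \eqref{eq:timespan} with a sufficiently small absolute constant makes this increment $<3B^2$ on $[t_0,t_0+\tau_0]$, so $\mathcal T$ is also relatively open and hence contains $[t_0,t_0+\tau_0]$ (the solution lives that long, e.g.\ by the blowup criterion of Theorem~\ref{thm:criterion}, since $\int_{t_0}^{t_0+\tau_0}\|\rho-\bar\rho\|_{L^2}^r\,d\tau\le\tau_0(2B)^r<\infty$). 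The two easier cases $\gamma>2$ and $2+a-\tfrac d2<1$ are analogous. The main obstacle is the exponent bookkeeping in the middle step: one must check that the whole cascade of Hölder/Young/Sobolev/interpolation exponents stays admissible at $s=0$ (in particular $q_3\ge2$, and the interpolation exponents $<1$) and that $\gamma_0=2+a-\tfrac d2$ is exactly the threshold forcing $b_1+b_2<1$---the same computation as in Theorem~\ref{thm:criterion}, but now without the slack that a large number of derivatives affords there.
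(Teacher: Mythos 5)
Your proof is correct and follows essentially the same route as the paper's: an $L^2$ energy estimate, duality, the Kato--Ponce inequality combined with Young's convolution inequality, Sobolev embedding and interpolation to reach a closable differential inequality, followed by a time-local closure. The only (cosmetic) differences are that you split $\rho=g+\bar\rho$ before applying Kato--Ponce rather than extracting $\bar\rho$ afterwards through $\|\rho\|_{L^{p_2}}\le\bar\rho+\|\rho-\bar\rho\|_{L^{p_2}}$, and that you close with a soft continuity argument instead of the paper's explicit ODE comparison; both changes are harmless and yield the same form of $\tau_0$.
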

\begin{proof}[Proof of Lemma~\ref{local-control} (for $\gamma\le2$,  $2+a-\frac{d}{2}\ge1$)] By multiplying~\eqref{eq} with $\rho-\bar\rho$ and integrating in space, we obtain
 \begin{align}\label{eq:L2esti}
   \frac{1}{2}\frac{d}{dt}\|\rho-\bar\rho\|_{L^2}^2&=-\|\rho\|_{\dot H^\frac{\gamma}{2}}^2-\int\rho\nabla K\ast\rho\cdot\nabla(\rho-\bar\rho)dx\nonumber
   \\&\le-\|\rho\|_{\dot H^\frac{\gamma}{2}}^2+\|\Lambda^{1-\frac{\gamma}{2}}(\rho\nabla K\ast\rho)\|_{L^2}\|\rho\|_{\dot H^{\frac{\gamma}{2}}}.
 \end{align}
Here we used the incompressibility of the flow.
By Lemma~\ref{l:product}, for $p_i,q_i\in(2,\infty)$ with $ {p_i}^{-1}+q_i^{-1}=2^{-1}$,  $i=1,2$, we have
\begin{align}\label{eq:for-l}
 \|\Lambda^{1-\frac{\gamma}{2}}(\rho\nabla K\ast\rho)\|_{L^2}\le C\left(\|\Lambda^{1-\frac{\gamma}{2}}\rho\|_{L^{p_1}}\|\nabla K\ast(\rho-\bar\rho)\|_{L^{q_1}}+\|\rho\|_{L^{p_2}}\|\nabla K\ast\Lambda^{1-\frac{\gamma}{2}}\rho\|_{L^{q_2}}\right),
\end{align} 
which means that
the last term on the RHS of~\eqref{eq:L2esti} can be bounded from above by 
\begin{align}\label{eq:0815}
 C\left(\|\Lambda^{1-\frac{\gamma}{2}}\rho\|_{L^{p_1}}\|\nabla K\ast(\rho-\bar\rho)\|_{L^{q_1}}+\|\rho\|_{L^{p_2}}\|\nabla K\ast\Lambda^{1-\frac{\gamma}{2}}\rho\|_{L^{q_2}}\right)\|\rho\|_{\dot H^{\frac{\gamma}{2}}}.
\end{align} 

We now claim that thanks to Young's convolution inequality and Gagliardo-Nirenberg-Sobolev  estimates (see Lemma~\ref{l:Sobolev} and~\ref{l:interpol}), term~\eqref{eq:0815} is controlled by
\begin{align}\label{eq:i1i2}
 C_\bigstar\|\nabla K\|_{L^{p_0}}\|\rho\|_{\dot H^{\frac{\gamma}{2}}} (I_1+I_2),
\end{align} 
where $C_\bigstar$ is a fixed positive constant (depending only on $\gamma, a$ and $d$) and 
\begin{align}
I_1&=\|\rho-\bar\rho\|_{L^2}^{2-(b_1+b_2)}\|\rho\|_{\dot H^\frac{\gamma}{2}}^{b_1+b_2},
 \\I_2&=(\bar\rho+\|\rho-\bar\rho\|_{L^2}^{1-b_3}\|\rho\|_{\dot H^\frac{\gamma}{2}}^{b_3})\|\rho-\bar\rho\|_{L^2}^{1-b_4}\|\rho\|_{\dot H^\frac{\gamma}{2}}^{b_4}.
\end{align}
Here $b_1,b_2\in[0,1)$ are obtained as in the proof of Theorem~\ref{thm:criterion} and satisfy  $b_1+b_2<1$ (we choose again $p_0=\frac{d}{a+1}(1-\delta)$ with $\delta=\delta(a,d,\gamma)>0$ (at least) as small as in Theorem~\ref{thm:criterion}). The value of $b_1+b_2$ is precisely given by setting $s=0$ in~\eqref{eq:b1b2sum}, i.e.
\begin{align}\label{eq:b12}
 b_1+b_2=\frac{-\frac{d}{2}+\frac{d}{p_0}+1}{\frac{\gamma}{2}}-1.
\end{align}
To see how the expression for $I_2$ and the exponents $b_3, b_4\in[0,1)$ arise, we proceed similarly to the proof of Theorem~\ref{thm:criterion}: since $2+a-\frac{d}{2}\ge1$ (which implies $p_0<\frac{d}{a+1}\le2$), we can choose $p_1>2$ sufficiently close to $2$ such that $q_4$ defined via 
\begin{align*}
 1+\frac{1}{q_2}&=\frac{1}{p_0}+\frac{1}{q_4}
\end{align*} satisfies $q_4\ge2.$
We now apply Young's convolution inequality to the second convolution term in~\eqref{eq:0815} 
estimating $\nabla K$ in $L^{p_0}$ and use in a subsequent step Lemma~\ref{l:Sobolev} (twice) for the arising  $\rho$-terms $\|\rho\|_{L^{p_2}}$ and $\|\Lambda^{1-\frac{\gamma}{2}}\rho\|_{L^{q_4}}$ 
 with 
\begin{align*} 
 \sigma_3&=\left(\frac{1}{2}-\frac{1}{p_2}\right)d,\\
 \sigma_4&=\left(\frac{1}{2}-\frac{1}{q_4}\right)d
\end{align*} and then Lemma~\ref{l:interpol} (twice) with
\begin{align*} 
 b_3&=\frac{\sigma_3}{\gamma/2},\\
 b_4&=\frac{\sigma_4+1-\gamma/2}{\gamma/2}
\end{align*}
to obtain the $I_2$-part of~\eqref{eq:i1i2}. Notice that
\begin{align}\label{eq:b3b4sum}
 b_3+b_4&=\frac{\sigma_3+\sigma_4+1-\gamma/2}{\gamma/2}\nonumber
 \\&=\frac{(1-(p_2^{-1}+q_4^{-1}))d+1}{\gamma/2}-1\nonumber
\\&=\frac{(p_0^{-1}-2^{-1})d+1}{\gamma/2}-1
\end{align}
and that the assumption $\gamma>2+a-\frac{d}{2}$ implies that for $p_0<\frac{d}{1+a}$ sufficiently large the strict bound 
$\frac{(p_0^{-1}-2^{-1})d+1}{\gamma/2}-1<1$ holds true. Hence 
\begin{align*}
 b_3+b_4<1.
\end{align*} (Since $b_i\ge0$, this justifies in particular the application of Lemma~\ref{l:interpol} above.) Note that comparison of~\eqref{eq:b12} with~\eqref{eq:b3b4sum} shows $b_1+b_2=b_3+b_4$.

Abbreviating $b:=b_3+b_4+1<2$, we thus obtain the bound
\begin{align}\label{eq:used-l}
  \frac{1}{2}\frac{d}{dt}\|\rho-\bar\rho\|_{L^2}^2\le-\|\rho\|_{\dot H^\frac{\gamma}{2}}^2+C_\bigstar\|\nabla K\|_{L^{p_0}}\left(\|\rho-\bar\rho\|_{L^2}^{3-b}\|\rho\|^{b}_{\dot H^\frac{\gamma}{2}}+\bar\rho\|\rho-\bar\rho\|_{L^2}^{1-b_4}\|\rho\|_{\dot H^\frac{\gamma}{2}}^{1+b_4}\right).
\end{align}
For later use, we remark that from~\eqref{eq:for-l} and the  subsequent estimates up to~\eqref{eq:used-l}, we immediately deduce
\begin{align}\label{eq:rkr-l}
 \|\Lambda^{1-\frac{\gamma}{2}}(\rho\nabla K\ast\rho)\|_{L^2}\le C_\bigstar
 \|\nabla K\|_{L^{p_0}}\left(\|\rho-\bar\rho\|_{L^2}^{3-b}\|\rho\|^{b-1}_{\dot H^\frac{\gamma}{2}}+\bar\rho\|\rho-\bar\rho\|_{L^2}^{1-b_4}\|\rho\|_{\dot H^\frac{\gamma}{2}}^{b_4}\right).
\end{align}

We now define
\begin{align*}
 c_1&=\left(1-\frac{b}{2}\right)^{-1}(3-b)
 =2\left(1+\frac{1}{2-b}\right)
\end{align*}
and note that
\begin{align*}
 \left(1-\frac{1+b_4}{2}\right)^{-1}(1-b_4)=2.
\end{align*}
Applying a standard absorption argument to~\eqref{eq:used-l}, we then find 
\begin{align}\label{eq:usedLater}
  \frac{d}{dt}\|\rho-\bar\rho\|_{L^2}^2\le-\|\rho\|_{\dot H^\frac{\gamma}{2}}^2+C_\clubsuit (\|\nabla K\|_{L^{p_0}})
  \left(\|\rho-\bar\rho\|_{L^2}^{c_1}+\bar\rho^{\frac{2}{1-b_4}}\|\rho-\bar\rho\|_{L^2}^{2}\right).
\end{align}
Once more for later use, we note that Young's multiplication inequality applied to the RHS of~\eqref{eq:rkr-l} yields 
\begin{align}\label{eq:forCaseII}
  \|\Lambda^{1-\frac{\gamma}{2}}(\rho\nabla K\ast\rho)\|_{L^2}^2\le \frac{1}{2}\|\rho\|_{\dot H^\frac{\gamma}{2}}^2+C_\clubsuit(\|\nabla K\|_{L^{p_0}})\left(\|\rho-\bar\rho\|_{L^2}^{c_1}+\bar\rho^{\frac{2}{1-b_4}}\|\rho-\bar\rho\|_{L^2}^{2}\right),
\end{align}
with the same constants $c_1$ and $C_\clubsuit$ as in~\eqref{eq:usedLater}.

Now note that $c_1\ge2$ and that, by~\eqref{eq:usedLater}, the function $f(t)=\|\rho(t)-\bar\rho\|_{L^2}^2$ satisfies 
\begin{align*}
 f'\le C_0f^{c_1/2}+C_0\bar\rho^{\frac{2}{1-b_4}} f, \hspace{1cm} f(t_0)=B^2
\end{align*} where $C_0=C_\clubsuit(\|\nabla K\|_{L^{p_0}})$.
Comparison with the explicit solution $\tilde f$ to
\begin{align*}
 \tilde f'= C_0\tilde f^{c_1/2}+C_0\bar\rho^{\frac{2}{1-b_4}} \tilde f, \hspace{1cm} \tilde f(t_0)=B^2,
\end{align*} 
which is given by
\begin{align*}
 \tilde f(t_0+t)=R^{\frac{1}{q}}\exp(C_0Rt)B^2\left(R-B^{2q}\left[\exp(C_0Rqt)-1\right]\right)^{-\frac{1}{q}}
\end{align*}
with $q=\frac{c_1-2}{2}$ and $R=\bar\rho^{\frac{2}{1-b_4}}$,
shows that
\begin{align}
 f(t_0+\tau)\le 4B^2, \hspace{.3cm}\text{whenever }0\le\tau\le \tau_0:=\delta_0 C_0^{-1}\min\left\{\frac{2}{c_1-2}B^{-(c_1-2)},\bar\rho^{-\frac{2}{1-b_4}}\right\}.   
\end{align}
Here $\delta_0>0$ is a universal constant.
\end{proof}

\section{Prevention of blowup}\label{sec:suppression}

In the following we assume that our vector field $u$ is relaxation enhancing in the sense analogous to~\cite[Definition~5.1]{kiselev_suppression_2016}, but adapted to the fractional diffusion~$-\Lambda^\gamma$. In order to give a precise definition, let us recall that a divergence-free Lipschitz vector field $u$ on $\mathbb{T}^d$ gives rise to a flow map $\Phi:\mathbb{R}\times\mathbb{T}^d\to\mathbb{T}^d,\;\;(t,x)\mapsto \Phi_t(x)$ via
\begin{align*}
 \frac{d}{dt}\Phi_t(x)&=u(\Phi_t(x)),\\
 \Phi_0&=\text{Id}_{\mathbb{T}^d},
\end{align*} where the transformations $\Phi_t$ are measure-preserving Lipschitz diffeomorphisms. Thus we obtain a one-parameter group of unitary operators $U^tf(x)=f(\Phi_t^{-1}(x))$ on $L^2(\mathbb{T}^d)$.
\begin{definition}\label{def:gammaRE} Let $\gamma\ge1$. We call a divergence-free Lipschitz vector field $u=u(x)$ $\gamma$-\textit{relaxation enhancing} ($\gamma$-RE) if the corresponding unitary operator $U^1$ does not have any non-constant eigenfunctions in $ H^\frac{\gamma}{2}(\mathbb{T}^d)$.	
\end{definition}
\begin{remarks*}$ $\vspace{-.2cm}
\begin{enumerate}[(i)]
 \item  Theorem~\ref{thm:relaxation} and Remark~\ref{rem:gamma<1} below will explain the term \say{relaxation enhancing}. 
 \item 
 The notion \say{relaxation enhancing} was first introduced in~\cite{constantin_diffusion_2008} in a somewhat more general context. The notion used in~\cite{kiselev_suppression_2016} corresponds in our definition to $2$-RE. Any flow which is weakly mixing in the ergodic sense (so that $U^1$ does not have any non-constant eigenfunctions in $L^2$) is also $\gamma$-RE for any  $\gamma$ as above. The existence of weakly mixing flows on $\mathbb{T}^d$ for any $d\ge2$ is classical and can be shown by considering suitable time changes of appropriate irrational translations on $\mathbb{T}^d$ (see~\cite[Section~6]{constantin_diffusion_2008} and references therein). A concrete example for a $2$-RE flow which is not weakly mixing can also be found in~\cite[Section~6]{constantin_diffusion_2008}.
 \item In Appendix~\ref{app:RE-examples} we show that 
 for any given $1\le \gamma_1<\gamma_2$ there exists a smooth, incompressible flow on $\mathbb{T}^2$ which is $\gamma_2$-RE but not $\gamma_1$-RE.
\end{enumerate}
\end{remarks*}
\noindent We now consider for a parameter $A\gg1$ the initial value problem 
\begin{subequations}
\label{eq:adv-agg-frac-diff}
\begin{align}
  \partial_t\rho^A+Au\cdot\nabla\rho^A&=-\Lambda^\gamma\rho^A+\nabla\cdot(\rho^A\nabla K\ast\rho^A) \;\;\text{ in }\;(0,\infty)\times\mathbb{T}^d,
  \\\rho^A(0)&=\rho_0,
\end{align}
\end{subequations}
where the kernel $K$ satisfies the conditions described in the introduction (Section~\ref{sec:intro}) and $d\ge2.$
The crucial ingredient in the proof of our first main theorem (Theorem~\ref{thm:suppression}) is the following result (cf. \cite{constantin_diffusion_2008}):
\begin{theorem}[Enhanced relaxation]\label{thm:relaxation}Let $\gamma\ge1$  and let the divergence-free smooth vector field $u$ be $\gamma$-relaxation enhancing. Then for every $\tau>0$, $\varepsilon>0$ there exists a positive constant $A_0=A_0(\tau,\varepsilon)$ such that for any $A\ge A_0$ and for any $\mu_0\in L^2(\mathbb{T}^d)$ with $\int_{\mathbb{T}^d}\mu_0=0$ the solution $\mu^A$ to 
\begin{align}\label{eq:linadvdiff}
 \partial_t\mu^A+Au\cdot\nabla\mu^A&=-\Lambda^\gamma\mu^A\;\;\text{ in }\;(0,\infty)\times\mathbb{T}^d,\\
 \mu^A(0)&=\mu_0\nonumber
\end{align}
satisfies $\|\mu^A(t)\|_{L^2}\le\varepsilon\|\mu_0\|_{L^2}$ for all $t\ge\tau$.
\end{theorem}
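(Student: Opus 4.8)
The plan is to follow the abstract functional-analytic framework of Constantin--Kiselev--Ryzhik--Zlato\v{s}~\cite{constantin_diffusion_2008}, replacing the role played there by the Laplacian $-\triangle$ with the fractional operator $-\Lambda^\gamma$. Write $B=u\cdot\nabla$, a skew-adjoint operator on $L^2(\mathbb{T}^d)$ (by incompressibility), and $\Gamma=\Lambda^\gamma$, a non-negative self-adjoint operator with $\ker\Gamma$ equal to the constants. After projecting onto the mean-zero subspace $L^2_0(\mathbb{T}^d)$ (which both $B$ and $\Gamma$ preserve), equation~\eqref{eq:linadvdiff} becomes $\dot\mu=-(A\,iL+\Gamma)\mu$ with $iL$ the (skew-adjoint) generator $B$. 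The semigroup $e^{-t(AiL+\Gamma)}$ is a contraction on $L^2_0$, and what must be shown is that its contractivity can be made \emph{arbitrarily strong} on a \emph{fixed} time interval $[\tau,\infty)$ once $A$ is large. By the semigroup property and the uniform (in $A$) contraction bound $\|\mu^A(t)\|_{L^2}\le\|\mu_0\|_{L^2}$, it suffices to prove the single-time statement: for every $\varepsilon>0$ and every $\tau>0$ there is $A_0$ so that $\|\mu^A(\tau)\|_{L^2}\le\varepsilon\|\mu_0\|_{L^2}$ for $A\ge A_0$; iterating over the intervals $[\tau,2\tau],[2\tau,3\tau],\dots$ then handles all $t\ge\tau$ (one actually runs the iteration on $[\tau/2,\tau]$, say, to get a clean statement for all $t\ge\tau$).

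The core is the abstract criterion from~\cite{constantin_diffusion_2008}: the family $e^{-t(AiL+\Gamma)}$ exhibits this enhanced relaxation on a fixed time interval if and only if $iL$ (equivalently $U^1$, via Stone's theorem and the correspondence between the group generated by $B$ and the Koopman group) has \emph{no} eigenfunction lying in the form domain of $\Gamma$, i.e. in $\dot H^{\gamma/2}(\mathbb{T}^d)=H^{\gamma/2}(\mathbb{T}^d)\cap L^2_0$. This is exactly Definition~\ref{def:gammaRE}. I would reproduce the argument by the RAGE-theorem / contradiction scheme: suppose the conclusion fails, so there are $\varepsilon_0>0$, $\tau_0>0$, a sequence $A_n\to\infty$ and unit-norm mean-zero data $\mu_0^{(n)}$ with $\|e^{-\tau_0(A_niL+\Gamma)}\mu_0^{(n)}\|_{L^2}>\varepsilon_0$. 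Rescaling time by $A_n$, set $v_n(s)=e^{-s(iL+\Gamma/A_n)}\mu_0^{(n)}$; then $v_n$ stays close in $L^2$ to the unitary evolution $e^{-siL}\mu_0^{(n)}$ on $[0,A_n\tau_0]$ in an averaged sense, while the dissipation forces $\frac{1}{A_n}\int_0^{A_n\tau_0}\|\Gamma^{1/2}v_n(s)\|_{L^2}^2\,ds$ to stay bounded; hence along a subsequence the time-averaged measures $\frac1{A_n\tau_0}\int_0^{A_n\tau_0}|e^{-siL}\mu_0^{(n)}\rangle\langle e^{-siL}\mu_0^{(n)}|\,ds$ concentrate (by the RAGE theorem, using compactness of the embedding $\dot H^{\gamma/2}\hookrightarrow L^2$ on the torus) on the point spectrum of $L$, producing in the limit a genuine eigenfunction of $U^1$ in $\dot H^{\gamma/2}$ of norm bounded below, contradicting the $\gamma$-RE hypothesis.

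Two points need care, and the second is the real obstacle. First, one must verify that the embedding of the form domain $\dot H^{\gamma/2}(\mathbb{T}^d)$ into $L^2(\mathbb{T}^d)$ is compact — immediate here since $\gamma\ge1>0$ and the domain is the torus (Rellich), so the spectral-projection/RAGE input goes through verbatim; this is precisely the place where only $\gamma\ge1$ (indeed $\gamma>0$) is used, so no extra hypothesis on $\gamma$ enters. Second, and more delicate, is extracting the limiting eigenfunction with quantitative control: one needs that the approximate eigenfunctions $\mu_0^{(n)}$ (or their suitable truncations to the low spectral subspaces of $L$) have $\dot H^{\gamma/2}$-norms that do \emph{not} blow up, so that the $L^2$-weak limit is nonzero and lies in $\dot H^{\gamma/2}$; this is exactly the subtle step in~\cite{constantin_diffusion_2008} and is handled by combining the a priori bound on the time-averaged dissipation with a careful choice of spectral cutoff for $L$ at a level depending on $A_n$. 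I expect the bulk of the write-up to consist of transcribing that argument, checking that every place where $-\triangle$ appeared can be replaced by $-\Lambda^\gamma$ (the only structural facts used being self-adjointness, non-negativity, $\ker=\mathrm{const}$, and compact form-domain embedding), and noting that the resulting $A_0$ depends only on $\tau,\varepsilon$ (and on $u,\gamma,d$), not on $\mu_0$.
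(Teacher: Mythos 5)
Your overall strategy is the right one and matches the paper's: reduce to the CKRZ08 abstract framework, run the RAGE-theorem/contradiction argument, and use compactness of $\dot H^{\gamma/2}\hookrightarrow L^2$. For $\gamma\ge2$ the paper does exactly what you describe: apply \cite[Theorem~1.4]{constantin_diffusion_2008} after verifying (via Proposition~\ref{prop:bdd-evol-1}, i.e.\ the boundedness of the transport evolution on $\dot H^{\gamma/2}$) that the form domain of $\Gamma=\Lambda^\gamma$ is preserved by the transport group. But your concluding claim --- that the only structural inputs are self-adjointness, non-negativity, $\ker=\mathrm{const}$, and compact form-domain embedding, so the argument can be transcribed --- misses the obstacle that actually drives Appendix~\ref{app:tranport-diffusion}.

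The gap is this. The CKRZ08 framework is not purely abstract: their hypothesis (2.1) requires the skew generator $iL=u\cdot\nabla$ to be relatively $\Gamma^{1/2}$-bounded. For $\Gamma=-\triangle$ this is automatic since $u\cdot\nabla\phi$ is controlled by $\|\phi\|_{H^1}=\|\Gamma^{1/2}\phi\|$. For $\Gamma=\Lambda^\gamma$ with $\gamma<2$ it \emph{fails}: $\|u\cdot\nabla\phi\|_{L^2}$ is not controlled by $\|\Lambda^{\gamma/2}\phi\|_{L^2}$, so the well-posedness and the approximation-by-transport steps of CKRZ08 do not carry over by formal substitution. This is exactly what the paper must repair: Theorem~\ref{thm:lwp} replaces CKRZ08's perturbation-theoretic well-posedness with a Galerkin construction, and Lemma~\ref{l:approx-by-transport} replaces \cite[Lemma~2.4]{constantin_diffusion_2008}, where the key point is that the equation for $\eta^\varepsilon-\eta^0$ is interpreted in $H^{\gamma/2-1}$ and paired against $H^{\gamma/2}$, which requires $H^{\gamma/2-1}\subseteq H^{-\gamma/2}$, i.e.\ $\gamma\ge1$. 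That dual pairing, combined with Proposition~\ref{prop:bdd-evol-1}, is what yields the quantitative control you correctly flag as the ``real obstacle'' --- but it does \emph{not} go through for all $\gamma>0$ with $L^2$ data as you assert; for $\gamma\in(0,1)$ the paper explicitly retreats to $H^{1/2}$ data (Remark~\ref{rem:gamma<1}\,(ii)). In short: you have the right skeleton and the right intuition about where the delicacy lies, but the claim that only a ``formal adaptation'' is needed is precisely what the paper's Appendices~\ref{app:tranport-diffusion} and~\ref{app:transport} exist to circumvent, and the sharp role of the hypothesis $\gamma\ge1$ (versus $\gamma>0$) is lost in your account.
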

\begin{remark}\label{rem:gamma<1}$ $\vspace{-.2cm}
\begin{enumerate}[(i)]
 \item 
 As in the proof of the first part of Theorem~1.4 in~\cite{constantin_diffusion_2008}
 one can show that the $\gamma$-RE property of $u$ is also necessary 
 for the statement in Theorem~\ref{thm:relaxation} to hold. 
\item  If restricting to initial data in $H^\frac{1}{2}$ (instead of general $L^2$ data), one is still able to obtain enhanced relaxation for $\gamma\in(0,1)$ if the unitary evolution (cf.\;$U^1$ in Definition~\ref{def:gammaRE}) does not have any non-constant eigenfunctions in $H^\frac{\gamma}{2}$.
 \item Theorem~\ref{thm:relaxation} (at least with $\gamma=2$) remains true when $L^2$ is replaced by $L^p$ for any $p\in[1,\infty]$, see~\cite[Theorem~5.5]{constantin_diffusion_2008}.\label{rem:Lp-relax}
\end{enumerate}
\end{remark}
\noindent In the case $\gamma\ge2$ Theorem~\ref{thm:relaxation} is a consequence of the abstract criterion in~\cite{constantin_diffusion_2008} (combined with Proposition~\ref{prop:bdd-evol-1}). 
We will sketch the extension to arbitrary $\gamma\ge1$ in Appendix~\ref{app:tranport-diffusion}.
In any case, an important ingredient in the proof is the boundedness of the linear transport evolution in $ H^\frac{\gamma}{2}$ for sufficiently regular vector fields:
\begin{proposition}[Estimate for transport equation] \label{prop:bdd-evol-1}
Let $v=v(x)$ be a smooth divergence-free\footnote{The assumption $\nabla\cdot v=0$ is not necessary for the boundedness of the evolution~\eqref{eq:lin-tp} with respect to$\|\cdot\|_{\dot H^\frac{\gamma}{2}}$, see~\cite{bahouri_fourier_2011}.} vector field and assume $\gamma>0$. Then any sufficiently regular solution $\eta$ to 
\begin{subequations}\label{eq:lin-tp}
  \begin{align}
    \partial_t\eta+v\cdot\nabla\eta&=0\;\;\text{ in }\;(0,\infty)\times\mathbb{T}^d,
 \\\eta(0)&=\eta_0
\end{align} 
\end{subequations}
satisfies the bound
\begin{equation}\label{eq:transport-bound}
 \|\eta(t)\|_{\dot H^\frac{\gamma}{2}(\mathbb{T}^d)}\lesssim \exp(C(v)t)\|\eta_0\|_{\dot H^\frac{\gamma}{2}(\mathbb{T}^d)},
\end{equation} 
where $ C(v)\lesssim_{\gamma,d} \|\Lambda^{\gamma+\frac{d}{2}+1}v\|_{L^2}.$
\end{proposition}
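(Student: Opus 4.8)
The plan is to derive an \emph{a priori} energy estimate for $\|\eta\|_{\dot H^{\gamma/2}}$, with the derivative loss in the transport term absorbed by a commutator bound. Set $s=\tfrac{\gamma}{2}$. Since $v\cdot\nabla$ and $\Lambda^s$ annihilate constants, we may assume $\hat\eta(0)=0$ (the mean is transported trivially and does not affect $\|\cdot\|_{\dot H^s}$), and on $\mathbb{T}^d$ one has $\|f\|_{\dot H^s}\le\|f\|_{H^s}\lesssim_{\gamma}\|f\|_{\dot H^s}$ for zero-mean $f$ because $|k|\ge1$; so it suffices to control $\|\eta\|_{\dot H^s}$. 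Applying $\Lambda^s$ to \eqref{eq:lin-tp} gives
\[
 \partial_t\Lambda^s\eta+v\cdot\nabla\Lambda^s\eta=[\,v\cdot\nabla,\,\Lambda^s\,]\eta=:R,
\]
and pairing with $\Lambda^s\eta$ in $L^2$, using $\nabla\cdot v=0$ to discard the transport term via $\int_{\mathbb{T}^d}(v\cdot\nabla\Lambda^s\eta)\Lambda^s\eta\,dx=-\tfrac12\int_{\mathbb{T}^d}(\nabla\cdot v)|\Lambda^s\eta|^2\,dx=0$, yields
\[
 \tfrac12\tfrac{d}{dt}\|\eta\|_{\dot H^s}^2\le\|R\|_{L^2}\|\eta\|_{\dot H^s}.
\]
Hence \eqref{eq:transport-bound} follows from Gr\"onwall's inequality as soon as the commutator estimate
\[
 \|[\,v\cdot\nabla,\,\Lambda^s\,]\eta\|_{L^2}\ \lesssim_{\gamma,d}\ \|\Lambda^{\gamma+\frac{d}{2}+1}v\|_{L^2}\,\|\eta\|_{\dot H^s}
\]
is available.

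The commutator estimate is the only real point; its delicate feature is that $\eta$ may be measured only in $\dot H^s$, so no derivative of order exceeding $s$ is allowed to fall on $\eta$. I would prove it via a Bony paraproduct decomposition $v_j\,\partial_j\eta=T_{v_j}\partial_j\eta+T_{\partial_j\eta}v_j+R(v_j,\partial_j\eta)$ (summed over $j$; see \cite{bahouri_fourier_2011}). For the first piece the standard paraproduct--commutator lemma gives $\|[\Lambda^s,T_{v_j}]\partial_j\eta\|_{L^2}\lesssim\|\nabla v\|_{L^\infty}\|\Lambda^{s-1}\partial_j\eta\|_{L^2}\le\|\nabla v\|_{L^\infty}\|\eta\|_{\dot H^s}$, and $\|\nabla v\|_{L^\infty}\lesssim_d\|v\|_{H^{1+d/2+\epsilon}}$ by Sobolev embedding. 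For the remaining two pieces $\Lambda^s$ is applied directly and all derivatives may fall on $v$; by the usual paraproduct and remainder estimates, using Bernstein's inequality and the fact that $|k|\ge1$ on $\mathbb{T}^d$ to trade a low-frequency factor of $\partial_j\eta$ for $\|\eta\|_{\dot H^s}$, one bounds both by $\lesssim\|v\|_{\dot H^{\max\{s,\,d/2+1\}+\epsilon}}\|\eta\|_{\dot H^s}$. Since $\gamma\ge0$, every Sobolev index occurring --- namely $1+\tfrac d2+\epsilon$, $\tfrac d2+1$, and $s+\epsilon=\tfrac{\gamma}{2}+\epsilon$ --- is at most $\gamma+\tfrac d2+1$, and because $|k|\ge1$ we have $\|v\|_{\dot H^\sigma}\le\|\Lambda^{\gamma+d/2+1}v\|_{L^2}$ for all such $\sigma$; summing the three contributions gives $C(v)$ in the stated form. (One could equally use the Kenig--Ponce--Vega / Kato--Ponce commutator estimate instead of paraproducts; in either case the exponent $\gamma+\tfrac d2+1$ is convenient rather than sharp.)

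A more economical route for the write-up is simply to invoke the linear transport estimate in Besov spaces from \cite{bahouri_fourier_2011}: with $s=\tfrac{\gamma}{2}>0$, $p=r=2$ and zero forcing it gives $\|\eta(t)\|_{H^s}\lesssim\exp(C\,t\,V)\|\eta_0\|_{H^s}$ with $V=\|\nabla v\|_{B^{d/2}_{2,\infty}\cap L^\infty}$ when $\gamma\le d+2$ and $V=\|\nabla v\|_{H^{\gamma/2-1}}\sim\|v\|_{H^{\gamma/2}}$ otherwise; since $H^s$ coincides with $\dot H^s$ on zero-mean functions and, on $\mathbb{T}^d$, $V\lesssim\|v\|_{H^{\max\{1+d/2+\epsilon,\,\gamma/2\}}}\lesssim\|\Lambda^{\gamma+d/2+1}v\|_{L^2}$, this closes the proof. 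In either approach the main obstacle is purely technical: producing a commutator (equivalently, transport) estimate in $\dot H^{\gamma/2}$ that uses \emph{no} regularity of $\eta$ beyond $\dot H^{\gamma/2}$, all surplus derivatives being absorbed by the smooth field $v$.
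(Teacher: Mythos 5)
Your proposal is correct and gives the right result, but the route is genuinely different from the one taken in the paper. You apply $\Lambda^{\gamma/2}$ to the equation and control the single global commutator $[\,v\cdot\nabla,\Lambda^{\gamma/2}]\eta$ by a Bony paraproduct decomposition (or, as you note, by citing the Besov transport estimate in \cite{bahouri_fourier_2011} directly); this is the classical Kato--Ponce/Kenig--Ponce--Vega strategy, and it requires invoking paraproduct--commutator lemmata and the remainder estimates from harmonic analysis. The paper instead works frequency-by-frequency: it estimates $\frac{d}{dt}\|S_k\eta\|_{L^2}^2$ for each Littlewood--Paley block $S_k$, commutes the multiplication operator by $v$ (not $\Lambda^{\gamma/2}$) with $S_k$, and relies on the elementary observation (Lemma~\ref{commutator_gain_slow_varying}) that $\|[S_k,g]f\|_{L^2}\lesssim 2^{-k}\|\hat g(\beta)\beta\|_{l^1_\beta}\|f\|_{L^2}$, proved by nothing more than the mean value theorem applied to the symbol $\phi(2^{-k}\cdot)$. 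After a high/low frequency split $v=S_{\le k-4}v+S_{>k-4}v$ (the low part treated by the commutator lemma to gain $2^{-k}$, the high part by a crude Bernstein estimate dumping all derivatives on $v$), the paper sums in $k$ and applies Gr\"onwall. Conceptually both proofs hinge on a commutator gain plus Gr\"onwall, and both accept a non-sharp Sobolev index $\gamma+\tfrac{d}{2}+1$ on $v$; the trade-off is that your route is shorter if one is willing to import paraproduct machinery, whereas the paper's route is self-contained at the level of explicit Fourier series computations, which fits its choice to give a direct proof on the torus rather than transpose Besov-space results from $\mathbb{R}^d$.
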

\begin{remark*}
 A version of this result for Besov spaces on $\mathbb{R}^d$ can be found in~\cite{bahouri_fourier_2011}. In Appendix~\ref{app:transport} we will provide a proof of the above result, which is suboptimal in terms of the regularity required for the vector field $v$.
\end{remark*}

We are now in a position to turn to our first main result. From now on we let $p_0=p_0(\gamma,a,d)\in(1,\frac{d}{a+1})$ be an exponent for which both Theorem~\ref{thm:criterion} and Lemma~\ref{local-control} are valid. Also recall that by assumption~\eqref{gradLP} we have $\|\nabla K\|_{L^{p_0}}<\infty$.
For simplicity, any dependence of constants on $\gamma,a$ and $d$ will (as before) be omitted.
\begin{theorem}[Prevention of blowup for model with fractional dissipation]\label{thm:suppression}
Let $\gamma>\max\{2+a-\frac{d}{2},1\}.$  Suppose that the divergence-free smooth vector field $u(x)$ is $\gamma$-relaxation enhancing. Then for any $\rho_0\in L^2(\mathbb{T}^d)$ there exists $A_0(\|\rho_0-\bar\rho\|_{L^2},\bar\rho,u,\|\nabla K\|_{L^{p_0}})$
such that, whenever $A\ge A_0$, problem~\eqref{eq:adv-agg-frac-diff} has a global  solution $\rho^A\in C_b([0,\infty),L^2)\cap C^\infty((0,\infty)\times\mathbb{T}^d)$.
\end{theorem}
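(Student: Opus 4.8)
The plan is to combine the local-in-time $L^2$ control from Lemma~\ref{local-control} with the enhanced relaxation of Theorem~\ref{thm:relaxation} in a bootstrap/continuity argument, treating the nonlinearity as a perturbation of the linear advection-diffusion flow on a short time window. Concretely, set $B=\|\rho_0-\bar\rho\|_{L^2}$ and, from Lemma~\ref{local-control}, let $\tau_0=\tau_0(B,\bar\rho,\|\nabla K\|_{L^{p_0}})>0$ be the associated time span on which $\|\rho^A(t)-\bar\rho\|_{L^2}\le 2B$ holds for any $A$ (the local control is uniform in $A$ since the incompressible drift $Au\cdot\nabla\rho^A$ drops out of the $L^2$ energy identity, as in~\eqref{eq:L2esti}). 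Then apply Theorem~\ref{thm:relaxation} with $\tau=\tau_0/2$ (say) and a fixed small $\varepsilon\in(0,1/8)$ to obtain $A_0(\tau_0,\varepsilon,u)$ such that for $A\ge A_0$ the linear equation~\eqref{eq:linadvdiff} contracts: $\|\mu^A(t)\|_{L^2}\le\varepsilon\|\mu_0\|_{L^2}$ for $t\ge\tau_0/2$.

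Next I would run the following iteration on successive intervals of length $\tau_0$. On $[0,\tau_0]$ Lemma~\ref{local-control} gives $\|\rho^A(t)-\bar\rho\|_{L^2}\le 2B$. Write $\rho^A-\bar\rho = \mu^A + w^A$, where $\mu^A$ solves the linear problem~\eqref{eq:linadvdiff} with datum $\mu_0=\rho_0-\bar\rho$ and $w^A$ collects the nonlinear contribution, $w^A(0)=0$ and $\partial_t w^A + Au\cdot\nabla w^A + \Lambda^\gamma w^A = \nabla\cdot(\rho^A\nabla K\ast\rho^A)$. An energy estimate for $w^A$ (again the transport term disappears) combined with duality and the product-rule bound~\eqref{eq:forCaseII} — which was recorded precisely for this purpose — controls $\|w^A(\tau_0)\|_{L^2}$ by $C(\|\nabla K\|_{L^{p_0}})\,(B^{c_1/2}+\bar\rho^{1/(1-b_4)}B)\,\tau_0$, and by the definition of $\tau_0$ in~\eqref{eq:timespan} this is $\le CB$ with $C$ independent of $A$. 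Hence $\|\rho^A(\tau_0)-\bar\rho\|_{L^2}\le \varepsilon B + CB$. The point is that $\tau_0$ depends only on the \emph{current} size of the $L^2$-distance to the mean: if I can arrange that this distance is \emph{non-increasing} across each step (say bounded by $B$ at every integer multiple of $\tau_0$), then the same $\tau_0$ works on every interval and the solution never blows up, since by Theorem~\ref{thm:criterion} a uniform-in-time $L^2$ bound rules out blowup.

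To close the loop one needs $\varepsilon B + \|w^A(\tau_0)\|_{L^2}\le B$, i.e. the nonlinear increment over one step must not exceed $(1-\varepsilon)B$. This is the main obstacle: the crude bound $\|w^A(\tau_0)\|_{L^2}\le CB$ above is only of size $O(B)$, not $o(B)$, so by itself it does not give a strict contraction. The resolution, following Kiselev-Xu, is to be more careful about $\tau_0$ and about when one measures: choose the step length $\delta\le\tau_0$ \emph{small enough} (depending on $B,\bar\rho,\|\nabla K\|_{L^{p_0}}$ but not on $A$) that the nonlinear increment over time $\delta$ is at most $\tfrac14 B$, say — the Duhamel/energy estimate makes $\|w^A(\delta)\|_{L^2}\lesssim (\ldots)\,\delta$, which is $\le\tfrac14 B$ once $\delta$ is small — and \emph{then} choose $A_0=A_0(\delta,\varepsilon,u)$ so that Theorem~\ref{thm:relaxation} gives linear decay by a factor $\varepsilon<\tfrac14$ already at time $\delta$ (shrinking $\delta$ only forces a larger $A_0$, which is allowed). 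With $\delta$ and $A_0$ so fixed, on $[0,\delta]$ we get $\|\rho^A(\delta)-\bar\rho\|_{L^2}\le \varepsilon B + \tfrac14 B\le \tfrac12 B<B$, and inductively $\|\rho^A(k\delta)-\bar\rho\|_{L^2}\le B$ for all $k$, with $\|\rho^A(t)-\bar\rho\|_{L^2}\le 2B$ in between by Lemma~\ref{local-control}. This yields the global uniform $L^2$ bound; Theorem~\ref{thm:criterion} upgrades it to global existence, and the instantaneous smoothing recalled on page~\pageref{page:smoothing} gives $\rho^A\in C_b([0,\infty),L^2)\cap C^\infty((0,\infty)\times\mathbb{T}^d)$ and allows one to pass from smooth to general $L^2$ data. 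A minor technical point to handle is that the decomposition $\rho^A-\bar\rho=\mu^A+w^A$ must be re-initialised at each step with $\mu^A$ restarted from $\rho^A(k\delta)-\bar\rho$, whose $L^2$ norm is $\le B$ by the inductive hypothesis, so the same $A_0$ and the same $\varepsilon$-contraction apply uniformly in $k$.
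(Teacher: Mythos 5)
Your proposal correctly identifies the building blocks (Lemma~\ref{local-control} for local $L^2$ control, Theorem~\ref{thm:relaxation} for linear relaxation, Theorem~\ref{thm:criterion} to upgrade $L^2$ control to global regularity), and you even spot the crux of the matter: the crude bound on the nonlinear remainder $w^A$ is $O(B)$ rather than $o(B)$. But the resolution you propose — shrink the step length $\delta$ so that ``the Duhamel/energy estimate makes $\|w^A(\delta)\|_{L^2}\lesssim(\ldots)\delta$'' — does not work, and this is a genuine gap.

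Here is the problem. Testing the equation for $w^A=\rho^A-\bar\rho-\mu^A$ with $w^A$ and invoking~\eqref{eq:forCaseII} gives, after absorption,
\begin{align*}
 \frac{d}{dt}\|w^A\|_{L^2}^2+\|w^A\|_{\dot H^{\gamma/2}}^2
 \;\lesssim\; \|\rho^A\nabla K\ast\rho^A\|_{\dot H^{1-\gamma/2}}^2
 \;\le\; \tfrac12\|\rho^A\|_{\dot H^{\gamma/2}}^2 + C_\clubsuit(\ldots)\bigl(\|\rho^A-\bar\rho\|_{L^2}^{c_1}+\bar\rho^{\frac{2}{1-b_4}}\|\rho^A-\bar\rho\|_{L^2}^2\bigr),
\end{align*}
so that integrating over $[0,\delta]$ yields
\begin{align*}
 \|w^A(\delta)\|_{L^2}^2\;\lesssim\;\int_0^\delta\|\rho^A\|_{\dot H^{\gamma/2}}^2\,ds+C\delta.
\end{align*}
The second term is indeed $O(\delta)$, but the first one is \emph{not}: the energy identity for $\rho^A$ only gives $\int_0^\delta\|\rho^A\|_{\dot H^{\gamma/2}}^2\le B^2+C\delta$, which remains of order $B^2$ no matter how small you take $\delta$. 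So $\|w^A(\delta)\|_{L^2}$ cannot be forced below $\tfrac14 B$ purely by shortening the time window; the dominant contribution to the nonlinear error is the accumulated dissipation, not the length of the interval. (A Duhamel formulation does not rescue this either: the integrand $\nabla\cdot(\rho^A\nabla K\ast\rho^A)$ is not $L^2$-controlled by $\|\rho^A-\bar\rho\|_{L^2}$ alone, and the smoothing of $e^{-t\Lambda^\gamma}$ cannot be used uniformly in $A$ once the advection is included.)

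The paper closes this gap with a dichotomy on the size of $R(\tau_1)=\int_{t_0}^{t_0+\tau_1}\|\rho^A\|_{\dot H^{\gamma/2}}^2$. If $R(\tau_1)>B^2$ (strong dissipation), one does not compare with the linear flow at all: the direct energy estimate~\eqref{eq:usedLater} already shows $\|\rho^A(t_0+\tau_1)-\bar\rho\|_{L^2}^2\le B^2-R(\tau_1)+C\tau_1\le\frac{1}{16}B^2$. If instead $R(\tau_1)\le B^2$ (weak dissipation), the very term that ruins your estimate is now small, and the same computation gives $\|w^A(\tau_1)\|_{L^2}^2\le\frac12 R(\tau_1)+\frac{1}{16}B^2\le\frac{9}{16}B^2$, so $\|w^A(\tau_1)\|_{L^2}\le\frac34 B$, which combined with $\|\mu^A(\tau_1)\|_{L^2}\le\frac18 B$ closes the loop. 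Note also that the precise constant $\frac12$ multiplying $\|\rho\|_{\dot H^{\gamma/2}}^2$ in~\eqref{eq:forCaseII} matters — a generic constant $C$ would not allow the Case~II bookkeeping. Without the case split your iteration does not contract, so the proposal is incomplete as written; otherwise your overall scheme (waiting time, local control, linear contraction, continuity/iteration) matches the paper's.
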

\noindent The rough idea of the proof can be described as follows. Recall that aggregation-diffusion equations are generally characterised by two competing forces: the tendency to concentrate due to aggregation versus the tendency to uniformly distribute the initial mass over the spatial domain thanks to diffusion. As long as diffusion dominates, the solution cannot concentrate too much and thus will not blow up. In the delicate case of small diffusion (when the $H^\frac{\gamma}{2}$ norm is not large enough compared to $L^2$) the $\gamma$-RE flow -- if sufficiently strong -- takes care of the low frequencies by quickly stirring the density\footnote{Strictly speaking, this mechanism of stirring only fully applies if $\rho^A(t)$ lies in the continuous spectral subspace corresponding to $U^1$. In the case of a non-trivial component in the $L^2$-closure of the subspace spanned by all (rough) eigenfunctions the mechanism by which gradients are increased is somewhat more technical. The interested reader is referred to~\cite[Lemma~3.3]{constantin_diffusion_2008}.}. This increases spatial gradients, thus enhancing dissipation, and eventually prevents blowup.
\begin{proof}[Proof of Theorem~\ref{thm:suppression} (for $\gamma\le2$ and $2+a-\frac{d}{2}\ge1$)] 
Without loss of generality we may assume that $\rho_0$ is not constant, i.e. $\rho_0\not\equiv\bar\rho$ and $\rho\in C^\infty$ (cf.\,page~\pageref{page:smoothing} (LWP and Smoothing)). By Theorem~\ref{thm:criterion}, it suffices to prove global control in $L^2(\mathbb{T}^d)$. For this purpose we first introduce the following parameters: 
\begin{itemize}
 \item Denote $B:=\|\rho_0-\bar\rho\|_{L^2}>0$.
 \item Let $p_0\in\left(1,\frac{d}{a+1}\right)$, $c_1\ge2$, $b_4$  and $C_\clubsuit(\|\nabla K\|_{L^{p_0}})$ be the constants introduced in the proof of Lemma~\ref{local-control}. We recall that these quantities only depend on $\gamma, a$ and $d$.
 Furthermore denote by $\tau_0=\tau_0(B,\bar\rho,\|\nabla K\|_{L^{p_0}})$ the (possibly small) positive time span~\eqref{eq:timespan} in Lemma~\ref{local-control}. 
\item Define now $\tau_1=\min\left\{\frac{1}{16}\left\{4C_\clubsuit(\|\nabla K\|_{L^{p_0}})
  \left((2B)^{c_1-2}+\bar\rho^{\frac{2}{1-b_4}}\right)\right\}^{-1},\tau_0\right\}$. 
 \item Let $A_0=A_0(\tau_1)$ be such that for any $A\ge A_0$ and any mean-zero $\mu_0\in L^2(\mathbb{T}^d)$ 
 the solution $\tilde\mu^A$ to equation~\eqref{eq:linadvdiff} with initial value $\tilde\mu^A(0)=\mu_0$ saturates the bound $$\|\tilde\mu^A(\tau_1)\|_{L^2}\le \frac{1}{8}\|\mu_0\|_{L^2}.$$  The existence of such an $A_0$ is guaranteed by Theorem~\ref{thm:relaxation}. Obviously, $A_0$ can be chosen to be non-increasing on $\mathbb{R}^+$ and it will necessarily become unbounded near $\tau=0.$ 
\end{itemize}
Now define $t_0=\inf\{t>0:\|\rho^A(t)-\bar\rho\|_{L^2}\ge B\}$. If $t_0=\infty$, there is nothing to prove. We therefore assume $t_0<\infty$ so that by continuity $\|\rho^A(t_0)-\bar\rho\|_{L^2}=B$.  Since $\nabla\cdot(Au)=0$ the statement of Lemma~\ref{local-control} applies to $\rho=\rho^A$, and recalling $\tau_1\le\tau_0$,
we deduce the bound 
\begin{align}\label{eq:2B}
\|\rho^A(t_0+\tau)-\bar\rho\|_{L^2}\le 2B\hspace{.8cm}\text{for all }\tau\in[0,\tau_1].
\end{align}

  In the following we will show that the above choice of $A_0$ implies the bound $\|\rho^A(t_0+\tau_1)-\bar\rho\|_{L^2}\le B$. The claim then follows by iterating the argument: define $t_1=\inf\{t>t_0+\tau_1:\|\rho^A(t)-\bar\rho\|_{L^2}\ge B\}$ and proceed as before with $t_0$ replaced by $t_1$ etc. This then results in the global bound $\|\rho^A(t)-\bar\rho\|_{L^2}\le 2B$ for all $t>0$.
  
Denote $R(\tau)=\int_{t_0}^{t_0+\tau}\|\rho^A\|^2_{\dot H^\frac{\gamma}{2}}$. 
 We distinguish the following cases, which reflect the idea described above.
 
\noindent \textbf{Case I:} $R(\tau_1)>B^2$.
\\
Here we apply estimate~\eqref{eq:usedLater} (with $\rho$ replaced by $\rho^A$), which is possible since $Au$ is divergence-free. 
 Hence on the time interval $[t_0,t_0+\tau_1]$, we have 
\begin{align*}
  \frac{d}{dt}\|\rho^A-\bar\rho\|_{L^2}^2&\le-\|\rho^A\|_{\dot H^\frac{\gamma}{2}}^2+C_\clubsuit(\|\nabla K\|_{L^{p_0}})
  \left(\|\rho^A-\bar\rho\|_{L^2}^{c_1}+\bar\rho^{\frac{2}{1-b_4}}\|\rho^A-\bar\rho\|_{L^2}^{2}\right)
  \\&\le -\|\rho^A\|_{\dot H^\frac{\gamma}{2}}^2+4C_\clubsuit(\|\nabla K\|_{L^{p_0}})
  \left((2B)^{c_1-2}+\bar\rho^{\frac{2}{1-b_4}}\right)B^2,
\end{align*}
where we used~\eqref{eq:2B} in the second step.
We now integrate in time from $t_0$ to $t_0+\tau_1$ to obtain  
\begin{align*}
  \|\rho^A-\bar\rho\|_{L^2}^2(t_0+\tau_1)&\le B^2-B^2+\tau_1\cdot4C_\clubsuit(\|\nabla K\|_{L^{p_0}})\left((2B)^{c_1-2}+\bar\rho^{\frac{2}{1-b_4}}\right)B^2.
  \\&\le \frac{1}{16}B^2
\end{align*}
Here we used the hypothesis (of Case I) and, in the second step, the choice of $\tau_1$.
 \\
 \textbf{Case II:} $R(\tau_1)\le B^2$.
 \\ 
In this case we need to approximate $\rho^A(t_0+t)$ by the solution $\mu^A(t_0+t)$ to equation~\eqref{eq:linadvdiff} with datum $\mu^A(t_0)=\rho^A(t_0)$. We estimate
 \begin{align*}
  \frac{1}{2}\frac{d}{dt}\|\rho^A-\mu^A\|_{L^2}^2&+\|\rho^A-\mu^A\|_{\dot H^\frac{\gamma}{2}}^2 =-\int\rho^A \nabla K\ast\rho^A\cdot\nabla(\rho^A-\mu^A)\\
  &\le \frac{1}{2}\|\rho^A\nabla K\ast\rho^A\|^2_{\dot H^{1-\frac{\gamma}{2}}}+\frac{1}{2}\|\rho^A-\mu^A\|^2_{\dot H^\frac{\gamma}{2}}.
  \end{align*}
Absorption yields
 \begin{align}\label{eq:L2estA}
  \frac{1}{2}\frac{d}{dt}\|\rho^A-\mu^A\|_{L^2}^2&+\frac{1}{2}\|\rho^A-\mu^A\|_{\dot H^\frac{\gamma}{2}}^2\le \frac{1}{2}\|\rho^A\nabla K\ast\rho^A\|^2_{\dot H^{1-\frac{\gamma}{2}}}.
  \end{align}
Thanks to estimate~\eqref{eq:forCaseII}, the RHS of~\eqref{eq:L2estA} is bounded from above by
\begin{align*}
 \frac{1}{2}\left\{\frac{1}{2}\|\rho\|_{\dot H^\frac{\gamma}{2}}^2+C_\clubsuit(\|\nabla K\|_{L^{p_0}})\left(\|\rho-\bar\rho\|_{L^2}^{c_1}+\bar\rho^{\frac{2}{1-b_4}}\|\rho-\bar\rho\|_{L^2}^{2}\right)\right\}.
\end{align*}
Combination with~\eqref{eq:2B} implies on the time interval $[t_0,t_0+\tau_1]$
\begin{align*}
 \frac{d}{dt}\|\rho^A-\mu^A\|_{L^2}^2&+\|\rho^A-\mu^A\|_{\dot H^\frac{\gamma}{2}}^2\le \frac{1}{2}\|\rho\|_{\dot H^\frac{\gamma}{2}}^2+4C_\clubsuit(\|\nabla K\|_{L^{p_0}})\left((2B)^{c_1-2}+\bar\rho^{\frac{2}{1-b_4}}\right)B^2.
  \end{align*}
We now integrate from $t_0$ to $t_0+\tau_1$ to conclude using also the hypothesis (of Case II)
\begin{align*}
 \|\rho^A-\mu^A\|_{L^2}^2(t_0+\tau_1)&\le \frac{1}{2}B^2+\tau_1 \cdot4C_\clubsuit(\|\nabla K\|_{L^{p_0}})\left((2B)^{c_1-2}+\bar\rho^{\frac{2}{1-b_4}}\right)B^2
   \\&\le  \frac{1}{2}B^2+ \frac{1}{16}B^2
   \\&= \frac{9}{16}B^2.
  \end{align*}
In the second step of the last estimate, we used the choice of $\tau_1$.

 Note that as $\mu^A(t_0)-\bar\rho=\rho^A(t_0)-\bar\rho$ (whose $L^2$-norm equals $B$), by choice of $A_0$ and since $A\ge A_0$, the bound 
 $$\|\mu^A-\bar\rho\|_{L^2}(t_0+\tau_1)\le \frac{1}{8}B$$ holds true.
 We therefore obtain
\begin{align*}
 \|\rho^A-\bar\rho\|_{L^2}(t_0+\tau_1)&\le\|\rho^A-\mu^A\|_{L^2}(t_0+\tau_1)+\|\mu^A-\bar\rho\|_{L^2}(t_0+\tau_1)
 \\&\le\frac{7}{8}B
\end{align*}

In any case we obtain
\begin{align*}
  \|\rho^A-\bar\rho\|^2_{L^2}(t_0+\tau_1)&\le B^2,
\end{align*}
which completes the proof.
\end{proof}

\begin{remarks*}$ $\vspace{-.1cm}
\begin{enumerate}[(i)]
 \item 
In the proof of Theorem~\ref{thm:suppression}, we actually obtained a strictly smaller $L^2$-distance from the mean after one iteration step.
By slightly adapting the above proof, one may verify that enhanced relaxation (in the sense of Theorem~\ref{thm:relaxation}) can be obtained also for the non-linear problem~\eqref{eq:adv-agg-frac-diff} if one allows the amplitude $A_0$ to depend on the initial datum~$\rho_0$ (as well as on the parameters of the problem).
 \item Note that for $d=2$ and $a=0$ the kernel $\nabla K$ has the same singularity at the origin as $\nabla N$, where $N$ denotes the two-dimensional Newton kernel. Although on the torus $N$ is not a proper convolution kernel, an analysis almost completely analogous to the one established here, shows that the statement of Theorem~\ref{thm:suppression} also applies to the two-dimensional parabolic-elliptic Keller-Segel model with fractional diffusion $-\Lambda^\gamma$ whenever $\gamma>1$.
 Similarly,  for the three-dimensional parabolic-elliptic Keller-Segel model with fractional diffusion, we have blowup prevention whenever $\gamma>\frac{3}{2}$.
\end{enumerate}
\end{remarks*}

\noindent
Note that for dimension $d\ge4$ Theorem~\ref{thm:suppression} no longer
includes the Keller-Segel case as the lower bound $\gamma_0=d/2$ would enforce diffusion to be stronger than classical (more concretely, it is the fact that the assumption $\frac{d}{2(d-2)}>1$ (cf.~\eqref{eq:L2cond}) is violated what makes our arguments break down). 
As alluded to in the introduction, the reason for this failure is the fact that the $L^2$-norm is no longer subcritical for~Keller-Segel in $d\ge4$.

Scaling suggests that by working in $L^p$ spaces of higher integrability ($p>2$) smaller lower bounds on $\gamma$ may be achieved, namely    
\begin{align}\label{eq:gamma}
  \gamma>2+a-d\left(1-\frac{1}{p}\right)
\end{align}
(as long as $\gamma$ is large enough so that the nonlinear equation is locally well-posed in a suitable Lebesgue (or Sobolev) space and for data of the corresponding regularity Theorem~\ref{thm:relaxation} is valid  for this $\gamma$. The additional condition $\gamma>1$, for instance, would ensure these last two properties).
For the Keller-Segel type (Newton kernel) singularity inequality~\eqref{eq:gamma} becomes $\gamma>\frac{d}{p}$.
This may lead to the expectation that also in the higher-dimensional Keller-Segel model  the mixing mechanism is able to prevent blowup for any $\gamma>1$ when confining to e.g.\;$L^\infty(\mathbb{T}^d)$ initial data. However, when trying to prove suppression using $L^p$- instead of $L^2$-estimates the following issue arises: 
following the notation in the proof of Theorem~\ref{thm:suppression}, it appears that in $L^p$, $p>2$, the approximation of $\rho^A$ by $\mu^A$ requires an estimate of the form
\begin{align}\label{eq:necessaryEst}
  \|\Lambda^{1-\frac{\gamma}{2}}f\|_{L^{p_1}}\lesssim \|\Lambda^\frac{\gamma}{2}(|f|^{\frac{p}{2}})\|_{L^2}^{2/p}
\end{align}
for some $p_1>2$. Certainly such an estimate cannot hold unless  $\gamma>\left(\frac{1}{p}+\frac{1}{2}\right)^{-1}$,  a lower bound which is strictly larger than $1$ if $p>2$. 
Despite the above scaling heuristics and although the weakly mixing/relaxation enhancing condition on the flow appears to be a very strong hypothesis, it is not obvious to the authors how to extend the approach in such a way that it includes the Keller-Segel model with fractional diffusion of any strength {$\gamma>1$} and in any dimension $d\ge2$. 

In the case $\gamma=2$, however, estimate~\eqref{eq:necessaryEst} becomes trivial, and indeed, in this case by working in $L^p$ instead of $L^2$ the suppression mechanism can be extended as to include in particular the classical Keller-Segel model ($\gamma=2$) in any dimension $d\ge2$, which we would like to illustrate in the following.
 Let us consider the Keller-Segel model -- in its precise form for clarity's sake -- 
under the influence of a strong incompressible flow
\begin{equation}\label{eq:classicalKS}
  \partial_t\rho^A+Au\cdot\nabla\rho^A=\triangle\rho^A+\nabla\cdot(\rho^A\nabla \triangle^{-1}(\rho^A-\bar\rho)) \;\text{ in }\;(0,\infty)\times\mathbb{T}^d
\end{equation}
with $d\ge4$.
The higher-dimensional Keller-Segel model with standard diffusion (i.e.\,\eqref{eq:classicalKS} with $A=0$) is $L^\frac{d}{2}$-critical and $L^1$-supercritical (choose $\gamma=2$, $a=d-2$ in~\eqref{eq:scale}). For $p>\frac{d}{2}$ local well-posedness in $L^p$  and regularity for positive times are well-established in the community (see e.g.~\cite{biler_existence_1994} for results on bounded domains and~\cite{calvez_blow-up_2012} for results on the whole space assuming sufficient decay at infinity), and at any (positive) level of mass (= $L^1$-norm for non-negative solutions) there exist smooth solutions  which blow up in finite time~\cite{biler_existence_1994, biler_blowup_2010, calvez_blow-up_2012}.
Moreover, for global regularity it suffices to globally control the $L^p$-norm of the solution, 
and statements analogous to those established in Section~\ref{sec:apriori} hold true whenever $p>\frac{d}{2}$. 
We will therefore directly proceed to the proof of global regularity for~\eqref{eq:classicalKS} whenever $A$ is sufficiently large.

\begin{theorem}[Prevention of blowup for Keller-Segel model in higher dimensions]\label{thm:suppKS}
 Assume $d\ge6$ and let $p>\frac{d}{2}$.  Suppose that the divergence-free smooth vector field $u(x)$ is $2$-relaxation enhancing. Then for any initial datum $\rho_0\in L^p(\mathbb{T}^d)$ there exists $A_0(\|\rho_0-\bar\rho\|_{L^p},\bar\rho,u,p)$ 
such that, whenever $A\ge A_0$, equation~\eqref{eq:classicalKS} has a global solution $\rho^A\in C_b([0,\infty),L^p)\cap C^\infty((0,\infty)\times\mathbb{T}^d)$ with initial value $\rho^A(0)=\rho_0$.\\
For $d=4,5$ the statement holds true under the stronger condition $p>\frac{4d}{d+2}$.
\end{theorem}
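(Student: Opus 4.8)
The proof should mirror the structure of the proof of Theorem~\ref{thm:suppression}, replacing the $L^2$-based a priori machinery of Section~\ref{sec:apriori} by its $L^p$-analogue, which the authors have announced holds whenever $p>\frac{d}{2}$. The two ingredients I would isolate at the outset are: (a) a \emph{local $L^p$-control} lemma of the same shape as Lemma~\ref{local-control}, namely that if $\|\rho^A(t_0)-\bar\rho\|_{L^p}=B$ then $\|\rho^A(t_0+\tau)-\bar\rho\|_{L^p}\le 2B$ for $\tau\le\tau_0$, with $\tau_0$ depending only on $B$, $\bar\rho$, $p$ and the model; and (b) the fact (Remark~\ref{rem:gamma<1}(\ref{rem:Lp-relax})) that enhanced relaxation (Theorem~\ref{thm:relaxation}) holds in $L^p$ for $\gamma=2$. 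Then, exactly as before, I would fix $\tau_1=\min\{\tau_0,\ (\text{const})^{-1}\}$ with the constant chosen so that the source terms accumulated over time $\tau_1$ contribute at most a small fraction of $B^p$, choose $A_0=A_0(\tau_1)$ from Theorem~\ref{thm:relaxation} so that the linear advection–diffusion semigroup contracts $L^p$ by a factor $\tfrac18$ over time $\tau_1$, set $t_0=\inf\{t:\|\rho^A(t)-\bar\rho\|_{L^p}\ge B\}$, and run the two-case dichotomy on $R(\tau_1)=\int_{t_0}^{t_0+\tau_1}\|\text{(dissipation)}\|^2$: in Case~I the dissipation integral exceeds $B^p$ (suitably normalised) and beats the nonlinear source directly; in Case~II it is small, so $\rho^A$ stays close in $L^p$ to the linear flow $\mu^A$ over $[t_0,t_0+\tau_1]$, and the contraction of $\mu^A$ plus the smallness of $\|\rho^A-\mu^A\|_{L^p}$ gives $\|\rho^A(t_0+\tau_1)-\bar\rho\|_{L^p}\le B$. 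Iterating yields the global bound $\|\rho^A(t)-\bar\rho\|_{L^p}\le 2B$, and the announced blowup criterion in $L^p$ (analogue of Theorem~\ref{thm:criterion}) upgrades this to a global smooth solution.

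**The key computational point.** Carrying out (a) and the Case~II comparison estimate in $L^p$ requires testing \eqref{eq:classicalKS} against $|\rho-\bar\rho|^{p-2}(\rho-\bar\rho)$ (for the nonlinear equation) and controlling the drift term $\int \nabla\cdot(\rho\nabla\triangle^{-1}(\rho-\bar\rho))\,|\rho-\bar\rho|^{p-2}(\rho-\bar\rho)$. The natural dissipation quantity produced is $\|\nabla(|\rho-\bar\rho|^{p/2})\|_{L^2}^2\sim\|\Lambda(|\rho-\bar\rho|^{p/2})\|_{L^2}^2$ (here $\gamma=2$), and the drift term must be absorbed into it plus a source depending only on $B$, $\bar\rho$, $p$. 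Because $\gamma=2$, the problematic estimate \eqref{eq:necessaryEst} — which obstructed the fractional case — reduces to the elementary $\|\Lambda^0 f\|=\|f\|$, so the Sobolev/interpolation bookkeeping goes through; the elliptic gain $\nabla\triangle^{-1}$ provides one derivative, matching the heuristic that $L^p$ is subcritical precisely when $p>d/2$. The bookkeeping for the exponents (the $b_i$'s, and the condition that various conjugate exponents land in the admissible open ranges for the Kato--Ponce and Sobolev lemmas) is where the dimensional restriction enters: it is presumably the need to keep all intermediate Lebesgue exponents $\ge2$ (so that Lemma~\ref{l:product} applies) together with $p>d/2$ that forces $d\ge6$ in the clean statement and the stronger $p>\frac{4d}{d+2}$ for $d=4,5$.

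**The main obstacle.** I expect the genuinely delicate step to be the $L^p$ version of Lemma~\ref{local-control} / estimate \eqref{eq:forCaseII}: establishing that the nonlinear drift term can be bounded by $\tfrac12\|\Lambda(|\rho-\bar\rho|^{p/2})\|_{L^2}^2$ plus a source controlled purely by $\|\rho-\bar\rho\|_{L^p}$, $\bar\rho$, $p$, with \emph{no} dependence on higher norms. The subtlety is that the quadratic nonlinearity, written in terms of $w=|\rho-\bar\rho|^{p/2}$, is not simply quadratic in $w$, so one must interpolate carefully between $\|w\|_{L^2}$, $\|w\|_{\dot H^1}$ and lower-order terms involving $\bar\rho$, and check that the total interpolation exponent on $\|w\|_{\dot H^1}$ stays strictly below $2$ — this is exactly the computation $b_1+b_2<1$ (here with $\gamma=2$), and it is what pins down the admissible range of $p$ and $d$. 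Once this absorption estimate is in hand, the rest of the argument is a routine transcription of the proof of Theorem~\ref{thm:suppression}, with $L^2$ replaced by $L^p$ throughout and Remark~\ref{rem:gamma<1}(\ref{rem:Lp-relax}) invoked for the linear contraction.
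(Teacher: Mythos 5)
Your overall strategy (transplant the Theorem~\ref{thm:suppression} scheme to $L^p$, use Remark~\ref{rem:gamma<1}\,(\ref{rem:Lp-relax}) for the linear contraction, note that~\eqref{eq:necessaryEst} trivialises at $\gamma=2$) is the right one, and your reading of where the dimensional thresholds come from is essentially correct: the Sobolev exponent $q$ is fixed by $\left(\tfrac1p-\tfrac1q\right)d=1$ for the Riesz potential $\nabla\triangle^{-1}$, and one needs $\tfrac1p+\tfrac1q<\tfrac12$ to leave room for a third exponent $r\in(2,\infty)$, which for $p>\tfrac d2$ holds iff $d\ge6$ and forces $p>\tfrac{4d}{d+2}$ when $d=4,5$. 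However, you diverge from the paper on a structural point: the paper explicitly does \emph{not} run the Case~I\,/\,Case~II dichotomy here, and says so. For $\gamma=2$ the comparison estimate is self-contained: testing the difference equation for $\rho^A-\mu^A$ against $|\rho^A-\mu^A|^{p-2}(\rho^A-\mu^A)$ produces the dissipation $\|\nabla h\|_{L^2}^2$ with $h=|\rho^A-\mu^A|^{p/2}$ on the left, and the nonlinear drift term can be fully absorbed into this $\|\nabla h\|_{L^2}^2$ plus source terms involving only $\|\rho^A-\bar\rho\|_{L^p}$, $\bar\rho$ and $\|h\|_{L^2}=\|\rho^A-\mu^A\|_{L^p}^{p/2}$ (cf.~\eqref{eq:LpEstimate}--\eqref{eq:LpEst2}). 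No ``diffusion is small'' hypothesis is invoked; the approximation holds for \emph{every} $t_0$.

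This matters because your Case~II as stated is a bit off. The quantity your Case~II hypothesis would control is the dissipation of $\rho^A$ (something like $\int\|\nabla(|\rho^A-\bar\rho|^{p/2})\|_{L^2}^2$), whereas the dissipation that naturally appears in the $L^p$ comparison estimate is $\|\nabla(|\rho^A-\mu^A|^{p/2})\|_{L^2}^2$ — a different quantity. In the $L^2$ proof this mismatch is papered over because the drift term is bounded by $\|\rho^A\nabla K\ast\rho^A\|_{\dot H^{1-\gamma/2}}$, which estimate~\eqref{eq:forCaseII} controls precisely by $\|\rho^A\|_{\dot H^{\gamma/2}}^2$ plus $L^2$ quantities; for $\gamma<2$ the high-frequency content of $\rho^A$ genuinely enters and the smallness of $R(\tau_1)$ is used. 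For $\gamma=2$, $\dot H^{1-\gamma/2}=L^2$ and the extra derivative is absent, so this mechanism is not needed — and in $L^p$ it would not transplant cleanly. Relatedly, under ``the key computational point'' you say you would test the comparison estimate against $|\rho-\bar\rho|^{p-2}(\rho-\bar\rho)$; the correct test function there is $|\rho^A-\mu^A|^{p-2}(\rho^A-\mu^A)$ (the former is for the local-control lemma, which is a separate, well-known step). In short: your proposal would likely still be made rigorous, but it carries an unnecessary dichotomy and a small bookkeeping slip; the paper's argument is a single absorption estimate followed by the Case~II-style comparison, with the local $L^p$-control lemma providing the short-time bound $\|\rho^A-\bar\rho\|_{L^p}\le 2B$.
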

\begin{rem}
For $d\ge6$ Theorem~\ref{thm:suppKS} is optimal in terms of the regularity required for the initial data in the sense that equation~\eqref{eq:classicalKS} with $A=0$ is $L^\frac{d}{2}$-critical.
\end{rem}

\begin{proof}[Proof of Theorem~\ref{thm:suppKS}]The result follows from arguments similar to Theorem~\ref{thm:suppression} with $L^2$ replaced by $L^p$. In contrast to the proof of Theorem~\ref{thm:suppression}, here we do not (need to) distinguish the cases of small and large diffusion: 
 for any time $t_0\ge0$ -- even if diffusion is large -- the local solution $\rho^A(t_0+\tau)$ to 
  \eqref{eq:classicalKS} can be approximated sufficiently well by 
 the solution $\mu^A(t_0+\tau)$ to equation~\eqref{eq:linadvdiff} with datum $\mu^A(t_0)=\rho^A(t_0)$ for small enough times $\tau>0$, as will be shown in the following. 
 
 We first prove the case $d\ge6$. Without loss of generality we may assume $p<d$. 
    Note that since $p<d$ we can define $q\in(p,\infty)$ via
  \begin{align}\label{eq:def-q}
    \left(\frac{1}{p}-\frac{1}{q}\right)d=1.
  \end{align}
  Since $d\ge6$ and $p>\frac{d}{2}$, we have
  \begin{align*}
    \frac{1}{p}+\frac{1}{q}=\frac{2}{p}-\frac{1}{d}<\frac{3}{d}\le\frac{1}{2}
  \end{align*}
so that there exists  $r\in(2,\infty)$ 
  satisfying
  \begin{align*}
   \frac{1}{p}+\frac{1}{q}+\frac{1}{r}=\frac{1}{2}.
  \end{align*}
 We now let $h=|\rho^A-\mu^A|^{p/2}$ and estimate using equation~\eqref{eq:classicalKS} and $\nabla\cdot u=0$
 \begin{align*}
  \frac{1}{p}\frac{d}{dt}\|\rho^A-\mu^A\|_{L^p}^p&+\frac{4(p-1)}{p^2}\|\nabla h\|_{L^2}^2 
  \\&\le-\int\rho^A\nabla \triangle^{-1}(\rho^A-\bar\rho)\cdot\nabla\left((\rho^A-\mu^A)|\rho^A-\mu^A|^{p-2}\right)
  \\&\le C\|\rho^A\|_{L^p} \|\nabla \triangle^{-1}(\rho^A-\bar\rho)\|_{L^{q}} \|\rho^A-\mu^A\|_{L^{r(p/2-1)}}^{p/2-1}\|\nabla h\|_{L^2}
  \\&\le C\|\rho^A\|_{L^p} \|\rho^A-\bar\rho\|_{L^p} \|h\|_{L^{r_1}}^{(p-2)/p}\|\nabla h\|_{L^2},
  \end{align*}
  where $r_1$ is defined via
\begin{align*}
 r_1\cdot p/2&=r(p/2-1).
\end{align*}
  In the last estimate we used Lemma~\ref{l:Sobolev} (exploiting our choice of $q$) and the boundedness of the Riesz transform on $L^p,\, p\in(1,\infty)$.
For $p\in(\frac{d}{2},d)$ and $d\ge6$ an elementary check yields $r_1>2.$ (Of course, $r_1\in[1,2]$ would be even easier.)
Now note that by Lemmata~\ref{l:Sobolev} and~\ref{l:interpol} 
for $\sigma=\left(\frac{1}{2}-\frac{1}{r_1}\right)d$ we have
\begin{align*}
 \|h\|_{L^{r_1}}\lesssim \|\Lambda^\sigma h\|_{L^2}\lesssim \|\nabla h\|_{L^2}^\sigma\|h\|_{L^2}^{1-\sigma}.
\end{align*}
Hence we obain
 \begin{align}\label{eq:LpEstimate}
 \frac{d}{dt}\|\rho^A-\mu^A\|_{L^p}^p&+\|\nabla h\|_{L^2}^2 \nonumber
  \\&\le C(\|\rho^A-\bar\rho\|_{L^p}+\bar\rho) \|\rho^A-\bar\rho\|_{L^p} \|h\|_{L^2}^{(1-\sigma)(p-2)/p}\|\nabla h\|_{L^2}^{1+\sigma(p-2)/p}.
  \end{align}
  It is elementary to verify that $p>\frac{d}{2}$ guarantees
 $\sigma(p-2)/p<1.$
  Thus an absorption argument yields
  \begin{align}\label{eq:LpEst2}
  \frac{1}{p}\frac{d}{dt}\|\rho^A-\mu^A\|_{L^p}^p
  &\le C(\|\rho^A-\bar\rho\|_{L^p}+\bar\rho)^{c_3} \|\rho^A-\bar\rho\|_{L^p}^{c_3} \|h\|_{L^2}^{c_4}
  \end{align}
  with $c_i=c_i(\sigma,p), i=3,4$, suitable positive exponents.
  Similarly to Lemma~\ref{local-control}, for $B:=\max\{\|\rho^A(t_0)-\bar\rho\|_{L^p},1\}$ 
  one can show\footnote{Since for the Keller-Segel model this is a well-known result, its proof is omitted here. Of course, the condition $p>\frac{d}{2}$ is crucial for its validity.} that $\|\rho^A-\bar\rho\|_{L^p}\le 2B$ on some small time interval $[t_0,t_0+\tau_0]$ where $\tau_0>0$ only depends on $B,\bar\rho$ and fixed parameters. 
  Also notice that on $[t_0,t_0+\tau_0]$ we then have  $\|h\|_{L^2}=\|\rho^A-\mu^A\|_{L^p}^{p/2}$ and $\|\rho^A-\mu^A\|_{L^p}\le \|\rho^A-\bar\rho\|_{L^p}+ \|\mu^A-\bar\rho\|_{L^p}\le 3B$, where in the last bound we used the fact that $\|\mu^A-\bar\rho\|_{L^p}$ is non-increasing on $[t_0,\infty)$.
The rest of the argument is similar to the reasoning in Case~II of the proof of Theorem~\ref{thm:suppression} except that here we need to use Remark~\ref{rem:gamma<1}\,\eqref{rem:Lp-relax} instead of Theorem~\ref{thm:relaxation}.

If $d=4,5$ we assume again without loss of generality $p<d$ and define $q$ via~\eqref{eq:def-q}. The condition $p>\frac{4d}{d+2}$ ensures that $\frac{1}{p}+\frac{1}{q}<\frac{1}{2}$. The rest of the proof then follows as before.
\end{proof}

\begin{appendix}
\section{Blowup in the absence of advection}\label{app:blowup}
In this section we aim to show that in the case $a=0$ and in the absence of strong advection there exist smooth initial data which lead to blowup in finite time.
We stress that blowup can also be produced in the presence of the advective term if one \textit{first} fixes the flow $Au$ (including its amplitude) and choses appropriate data \textit{afterwards}.

We consider the equation 
\begin{align}\label{eq:agg-diff}
 \partial_t\rho=-\Lambda^\gamma\rho+\nabla\cdot(\rho\nabla K\ast\rho)\;\;\text{ in }\;(0,\infty)\times\mathbb{T}^d,
\end{align}
where $\nabla K(x)\sim\frac{x}{|x|^2}$ near $x=0$, $d\ge2$ and $\gamma\in(1,2]$.
In this case, blowup can be produced by a construction very similar to the one in \cite{kiselev_suppression_2016}. We therefore confine ourselves to sketching the main argument and indicating the steps which deviate from~\cite{kiselev_suppression_2016}.
Let us introduce the following parameters and auxiliary functions:
\begin{itemize}
\item $0<2a<b<\frac{1}{8}$ (sufficiently small).
 \item $\rho_0\in C^\infty(\mathbb{T}^d)$ nonnegative with $\supp\rho_0\subset B_a(0)$ and mass $M\ge1$ (sufficiently large).
 \item $\phi$ a smooth cut-off at scale $b$: Fix $\phi_0\in C^\infty(\mathbb{R}^d)$ with $\supp\phi_0\subset B_{1}$,
 $\phi_0\equiv1$ on $B_{\frac{1}{2}}$, $0\le\phi_0\le1$. Then $\phi(x):=\phi_0(\frac{x}{b})$ can be considered as a function on the periodic box $\mathbb{T}^d$.
\end{itemize}
For simplicity we assume equality $\nabla K(x)=\frac{x}{|x|^2}$ on $B_{\frac{1}{4}}$.
The parameters $a,b,M$ will be fixed later. As long as the solution $\rho$ stays regular, it preserves positivity and mass. 

The main ingredient in the blowup proof is a virial argument, which can be exploited when considering the evolution of the second moment.
This is a standard technique for proving blowup of the two- and higher-dimensional Keller-Segel model in bounded domains and the whole space.
\begin{lemma}[Decrease of $2^{\text{nd}}$ moment]\label{l:2ndmom} Let $T>0$ and assume that problem~\eqref{eq:agg-diff} subject to initial condition  $\rho(0)=\rho_0$ has a regular solution $\rho$ 
 on $[0,T]$. Then for all $t\in[0,T]$ 
\begin{align*}
  \frac{d}{dt}\int_{\mathbb{T}^d}|x|^2\rho(t,x)\phi(x)\,dx&\le - \left(\int\rho(t,x)\phi(x)\,dx\right)^2+C_2M\|\rho(t,\cdot)\|_{L^1(\mathbb{T}^d\setminus B_b)}
  \\&\phantom{\le}\;+C_3bM^2+C_4M.
\end{align*}
\end{lemma}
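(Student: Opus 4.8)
The plan is to compute the time derivative of the localised second moment $\int |x|^2\rho\phi\,dx$ directly from the equation, splitting the right-hand side into a diffusion contribution and a drift contribution, and to bound each piece using positivity, conservation of mass, the explicit form $\nabla K(x)=x/|x|^2$ near the origin, and the localisation at scale $b$. First I would write
\begin{align*}
 \frac{d}{dt}\int |x|^2\rho\phi\,dx=-\int |x|^2\phi\,\Lambda^\gamma\rho\,dx+\int |x|^2\phi\,\nabla\cdot(\rho\nabla K\ast\rho)\,dx,
\end{align*}
and integrate by parts in the drift term to obtain $-\int \nabla(|x|^2\phi)\cdot\rho\,(\nabla K\ast\rho)\,dx$. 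On the support of $\phi$ the leading contribution is $-\int \nabla(|x|^2)\cdot\rho\,(\nabla K\ast\rho)=-2\int\int x\cdot\frac{x-y}{|x-y|^2}\rho(x)\rho(y)\,dy\,dx$, which after the standard symmetrisation $x\mapsto y$ becomes $-\int\int 1\cdot\rho(x)\rho(y)\,dy\,dx=-(\int_{B_b}\rho)^2$ in the regime where both $x,y$ lie well inside $B_{1/4}$; this produces the main negative term $-(\int\rho\phi)^2$ up to controlled errors. The remaining pieces — the part of $\nabla(|x|^2\phi)$ coming from $\nabla\phi$ (supported on the annulus $b/2\le|x|\le b$, where $|x|\le b$ and $|\nabla\phi|\lesssim b^{-1}$, giving $O(bM^2)$ after pairing with mass), the contribution of $\nabla K\ast\rho$ from the region where $\rho(y)$ sits outside $B_b$ (bounded by $CM\|\rho\|_{L^1(\mathbb{T}^d\setminus B_b)}$ since $|\nabla(|x|^2\phi)|\lesssim 1$ and $|\nabla K|$ is bounded away from the diagonal and smooth away from $0$), and the error from replacing $\nabla K$ by its homogeneous model (absorbed into $C_4M$ using smoothness of $K$ away from the origin) — are all lower order and match the stated right-hand side.

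For the diffusion term I would use the identity $\int |x|^2\phi\,\Lambda^\gamma\rho = \int (\Lambda^\gamma(|x|^2\phi))\,\rho$ (self-adjointness of $\Lambda^\gamma$ on $\mathbb{T}^d$), and then bound $\|\Lambda^\gamma(|x|^2\phi)\|_{L^\infty}\le C$ where $C$ depends only on $b$ and $d$ and $\gamma\le2$; pairing with $\rho\ge0$ of mass $M$ gives a term $\le CM$, absorbed into $C_4M$. Here one should be slightly careful since $|x|^2$ is not periodic, but $|x|^2\phi(x)$ is a fixed smooth compactly supported (within the fundamental domain) function, so $\Lambda^\gamma(|x|^2\phi)$ is a fixed bounded function and the constant is legitimately independent of the solution. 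This mirrors exactly the corresponding step for the classical Keller--Segel virial argument in \cite[Appendix~I]{kiselev_suppression_2016}, the only difference being that $\triangle$ is replaced by $\Lambda^\gamma$ with $\gamma\in(1,2]$, which only affects the value of the (harmless) constant.

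The part I expect to require the most care is the symmetrisation of the drift term and the precise accounting of the error terms arising because $\phi$ is a cutoff that does not vanish on the diagonal and because $|x|^2$ is only being used locally. Concretely, after symmetrising one gets
\begin{align*}
 -\int\int x\cdot\frac{x-y}{|x-y|^2}\,\rho(x)\rho(y)\,dy\,dx
 =-\frac12\int\int (x-y)\cdot\frac{x-y}{|x-y|^2}\,\rho(x)\rho(y)\,dy\,dx=-\frac12\left(\int\rho\right)^2
\end{align*}
\emph{only} when both integration variables lie in the region where $\nabla K$ equals its model and where the cutoff $\phi$ equals $1$; contributions from $y\notin B_b$ or from the transition region of $\phi$ must be peeled off and estimated separately, and this is where the terms $C_2M\|\rho\|_{L^1(\mathbb{T}^d\setminus B_b)}$, $C_3bM^2$ and $C_4M$ are generated. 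I would organise this by splitting the double integral according to whether $y\in B_b$ or not, handling the $y\in B_b$ part via the symmetrisation (picking up $-(\int\rho\phi)^2$ plus a $O(bM^2)$ error from $\phi\neq1$ near $\partial B_b$), and bounding the $y\notin B_b$ part crudely by $CM\|\rho\|_{L^1(\mathbb{T}^d\setminus B_b)}$ using that $|\nabla(|x|^2\phi)|$ is bounded and $|\nabla K\ast\rho|$ paired against it is controlled. Otherwise the argument is a routine adaptation of \cite[Appendix~I]{kiselev_suppression_2016}, which is why only a sketch is needed.
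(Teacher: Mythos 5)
Your proposal is correct and takes essentially the same route as the paper: test the equation against $|x|^2\phi$, handle the diffusion contribution via self-adjointness of $\Lambda^\gamma$ together with an $L^\infty$ bound on $\Lambda^\gamma(|x|^2\phi)$ (the paper makes this quantitative, $\lesssim b^{2-\gamma}$, via the singular-integral representation of the fractional Laplacian on $\T^d$), and symmetrise the drift contribution to extract the negative quadratic term, pushing the remainders into the stated error terms. The one presentational difference worth noting is that the paper inserts the \emph{symmetric} cutoff $\phi(x)\phi(y)$, which yields the exact identity for the main term, whereas you propose to split by $\chi_{B_b}(y)$; either works, but in both cases the decisive point in controlling the transition-region remainder is that the antisymmetric combination $\phi(x)(1-\phi(y))x-\phi(y)(1-\phi(x))y$ vanishes on the diagonal, which is what tames the $|x-y|^{-1}$ singularity of $\nabla K$ there --- this cancellation is implicit in your sketch (and in the paper's), and it is the only step that genuinely needs care.
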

\begin{remark*}
 Note that since $\supp \phi\subset(-\frac{1}{2},\frac{1}{2})$ the integrand on the LHS is well-defined on the periodic box~$\mathbb{T}^d$.
\end{remark*}

\begin{proof}
We compute 
\begin{align*}
  \frac{d}{dt}\int_{\mathbb{T}^d}|x|^2\rho(t,x)\phi(x)\,dx&=-\int_{\mathbb{T}^d}\rho(t,x)\Lambda^\gamma(|x|^2\phi(x))\,dx
  \\&\phantom{=}-\int_{\mathbb{T}^d}\int_{\mathbb{T}^d}\nabla(|x|^2\phi(x))\cdot\nabla K(x-y)\rho(t,y)\rho(t,x)\,dydx
  \\&=: (i)+(ii).
 \end{align*}
In order to estimate the first term on the RHS, let us recall that for $\gamma\in(0,2)$ the fractional Laplacian has the following representation (see e.g.\,\cite{cordoba_maximum_2004} or~\cite{roncal_2016}): 
\begin{align*}
 \Lambda^\gamma f(x)=\text{p.v.}\int_{\mathbb{T}^d}(f(x)-f(y))G_{\gamma,d}(x-y)dy,
\end{align*}
where
\begin{align*}
 G_{\gamma,d}(z)=c_{\gamma,d}\sum_{\alpha\in\mathbb{Z}^d}\frac{1}{|z-\alpha|^{d+\gamma}}, \;\; z\neq0,
\end{align*}
and $c_{\gamma,d}$ is a normalisation constant. 
Using the above formula and the smoothness of $\phi_0$, it is easy to see that there exists a positive constant $C_{\phi_0}<\infty$ such that for all $b\in(0,1]$ 
  \begin{align*}
  \left\|\Lambda^{\gamma}\left(|x|^2\phi_0\left(\frac{x}{b}\right)\right)\right\|_{L^\infty(\mathbb{T}^d)}&\le C_{\phi_0}b^{2-\gamma}.
 \end{align*}
 Recalling $\phi(x)=\phi_0(\frac{x}{b})$, we conclude $(i)\le CMb^{2-\gamma}$.\\
 To estimate the second term, we introduce the splitting
 \begin{align*}
  (ii)&=-2\int_{\mathbb{T}^d}\int_{\mathbb{T}^d}\phi(x)x\cdot\nabla K(x-y)\rho(t,y)\rho(t,x)\,dydx
  \\&\phantom{=}-\int_{\mathbb{T}^d}\int_{\mathbb{T}^d}|x|^2\nabla\phi(x)\cdot\nabla K(x-y)\rho(t,y)\rho(t,x)\,dydx
  \\&=-2\int_{\mathbb{T}^d}\int_{\mathbb{T}^d}\phi(x)x\cdot\nabla K(x-y)\rho(t,y)\rho(t,x)\phi(y)\,dydx
  \\&\phantom{=}-2\int_{\mathbb{T}^d}\int_{\mathbb{T}^d}\phi(x)x\cdot\nabla K(x-y)\rho(t,y)\rho(t,x)(1-\phi(y))\,dydx
  \\&\phantom{=}-\int_{\mathbb{T}^d}\int_{\mathbb{T}^d}|x|^2\nabla\phi(x)\cdot\nabla K(x-y)\rho(t,y)\rho(t,x)\,dydx
  \\&=:(iii)+(iv)+(v).
 \end{align*}
 On $\{x-y:x,y\in\supp\phi\}$ we have $\nabla K(z)=\frac{z}{|z|^2}.$ Thus, upon symmetrisation,
\begin{align*}
 (iii)&=-\int_{\mathbb{T}^d}\int_{\mathbb{T}^d}\frac{|x|^2-2x\cdot y+|y|^2}{|x-y|^2}\phi(x)\rho(t,y)\rho(t,x)\phi(y)\,dydx
 \\&=-\left(\int_{\mathbb{T}^d}\rho(t,x)\phi(x)\,dx\right)^2.
\end{align*}
Next, we note 
\begin{align*}
 (iv)&=-2\int_{\mathbb{T}^d}\int_{\mathbb{T}^d}\phi(x)x\cdot\nabla K(x-y)\rho(t,y)\rho(t,x)(1-\phi(y))\,dydx
 \\&=-2\int_{\mathbb{T}^d}\int_{\mathbb{T}^d}\phi(x)x\cdot\frac{x-y}{|x-y|^2}\,\chi_{B_\frac{1}{4}}(x-y)\rho(t,y)\rho(t,x)(1-\phi(y))\,dydx
 \\&\phantom{=}+2\int_{\mathbb{T}^d}\int_{\mathbb{T}^d}\phi(x)x\cdot\nabla K(x-y)\chi_{\mathbb{T}^d\setminus B_\frac{1}{4}}(x-y)\rho(t,y)\rho(t,x)(1-\phi(y))\,dydx
 \\&=-\int_{\mathbb{T}^d}\int_{\mathbb{T}^d}\left[\phi(x)(1-\phi(y))x-\phi(y)(1-\phi(x))y\right]\cdot\frac{x-y}{|x-y|^2}\,\chi_{B_\frac{1}{4}}(x-y)\rho(t,y)\rho(t,x)\,dydx
 \\&\phantom{=}+\int_{\mathbb{T}^d}\int_{\mathbb{T}^d}\phi(x)x\cdot\nabla K(x-y)\chi_{\mathbb{T}^d\setminus B_\frac{1}{4}}(x-y)\rho(t,y)\rho(t,x)(1-\phi(y))\,dydx
  \\&\le CM\|\rho(t)\|_{L^1(\mathbb{T}^d\setminus B_\frac{b}{2})}+CbM^2.
\end{align*}
In the last step we used $$|\left[\phi(x)(1-\phi(y))x-\phi(y)(1-\phi(x))y\right]|\le C\chi_{\mathbb{T}^d\times\mathbb{T}^d\setminus B_\frac{b}{2}\times B_\frac{b}{2}}(x,y).$$
Similar arguments yield
\begin{align*}
(v)&=-\int_{\mathbb{T}^d}\int_{\mathbb{T}^d}|x|^2\nabla\phi(x)\cdot\nabla K(x-y)\rho(t,y)\rho(t,x)\,dydx
\\&\le CM\|\rho(t)\|_{L^1(\mathbb{T}^d\setminus B_\frac{b}{2})}+CbM^2. 
\end{align*}
(In both estimates (and thus also in claimed estimate) the term $CbM^2$ can actually be dropped.)
Using all these estimates, we conclude
\begin{align*}
  \frac{d}{dt}\int_{\mathbb{T}^d}|x|^2\rho(t,x)\phi(x)\,dx&\le-\left(\int_{\mathbb{T}^d}\rho(t,x)\phi(x)\,dx\right)^2+CM\|\rho(t)\|_{L^1(\mathbb{T}^d\setminus B_\frac{b}{2})}\\&\phantom{\le}\;+CM^2b+CMb^{2-\gamma}. 
 \end{align*}
Since $\gamma\le2$, the claimed bound follows.
\end{proof}

Next, we need to ensure that the mass -- initially localised near the origin -- cannot escape too fast. The statement and proof are analogous to~\cite[Lemma~8.3]{kiselev_suppression_2016},
where the extension to $\gamma\in(1,2]$ follows as in the previous lemma. 

The existence of exploding solutions is shown completely analogously to~\cite[Proof of Theorem~8.1]{kiselev_suppression_2016}.

\section{Transport-diffusion equation}\label{app:tranport-diffusion}
In this section we will prove Theorem~\ref{thm:relaxation} in the remaining case $\gamma\in[1,2)$.
The proof of this theorem follows along the lines of the proof of \cite[Theorem 1.4]{constantin_diffusion_2008}, and we therefore only point out the differences.
First of all, if $\gamma<2$, condition~(2.1) in \cite{constantin_diffusion_2008} is no longer satisfied. We have the following replacement for~\cite[Theorem 2.1]{constantin_diffusion_2008}. 
\begin{theorem}[Local well-posedness]\label{thm:lwp} Assume $\gamma\in(1,2)$ and let $v=v(x)$ be a smooth divergence-free vector field. For any $T>0$ and $\mu_0\in H^\frac{\gamma}{2}(\mathbb{T}^d)$ there exists a unique solution $$\mu\in L^2(0,T;H^\gamma)\cap C([0,T];H^\frac{\gamma}{2})\text{ with }\partial_t\mu\in L^2(0,T;L^2)$$ of the Cauchy problem
  \begin{align}\label{eq:lwp}
   \partial_t\mu+v\cdot\nabla \mu&=-\Lambda^\gamma\mu\;\;\text{ in }\;(0,T)\times\mathbb{T}^d,\\
   \mu(0)&=\mu_0.\nonumber
  \end{align}
\end{theorem}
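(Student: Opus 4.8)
The plan is a by now standard parabolic construction: produce solutions by a Fourier--Galerkin scheme, pass to the limit using uniform energy bounds, and obtain uniqueness from an $L^2$ estimate on the difference of two solutions. The only genuinely model-specific point is the a priori bound in $\dot H^{\gamma/2}$: the drift term has to be absorbed by the dissipation coming from $-\Lambda^\gamma$, which is only $\dot H^\gamma$-coercive, while for $\gamma<2$ one has $\tfrac{\gamma}{2}+1>\gamma$, so not all derivatives can be moved onto a single factor.

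I would begin by recording the two basic energy identities for a smooth solution of \eqref{eq:lwp}. Testing with $\mu$ and using $\nabla\cdot v=0$ gives $\tfrac12\tfrac{d}{dt}\|\mu\|_{L^2}^2=-\|\mu\|_{\dot H^{\gamma/2}}^2\le0$, so $\|\mu(t)\|_{L^2}$ is non-increasing. Testing with $\Lambda^\gamma\mu$ gives
\[
 \tfrac12\tfrac{d}{dt}\|\mu\|_{\dot H^{\gamma/2}}^2+\|\mu\|_{\dot H^{\gamma}}^2=-\int_{\mathbb{T}^d}\Lambda^{\gamma/2}(v\cdot\nabla\mu)\,\Lambda^{\gamma/2}\mu\,dx .
\]
To control the right-hand side I would apply Lemma~\ref{l:duality} with exponent $\sigma=\tfrac{\gamma}{2}-1$ (this choice is admissible precisely because $\gamma>1$, so that $\sigma\ge-\tfrac{\gamma}{2}$), then the product estimate of Lemma~\ref{l:product} together with the Sobolev embedding Lemma~\ref{l:Sobolev} to handle the smooth factor $v$, placing $\nabla\mu$ in $\dot H^{\gamma-1}$, and finally the interpolation Lemma~\ref{l:interpol}:
\[
 \Big|\int_{\mathbb{T}^d}\Lambda^{\gamma/2}(v\cdot\nabla\mu)\,\Lambda^{\gamma/2}\mu\,dx\Big|\le\|v\cdot\nabla\mu\|_{\dot H^{\gamma-1}}\,\|\mu\|_{\dot H^{1}}\lesssim_v\big(\|\mu\|_{\dot H^{\gamma}}+\|\mu\|_{L^2}\big)\,\|\mu\|_{\dot H^{1}} .
\]
Interpolating $\|\mu\|_{\dot H^1}\lesssim\|\mu\|_{\dot H^\gamma}^{1/\gamma}\|\mu\|_{L^2}^{1-1/\gamma}$ and noting that the worst resulting power of $\|\mu\|_{\dot H^\gamma}$ is $1+\tfrac1\gamma<2$, Young's inequality yields the bound $\tfrac12\|\mu\|_{\dot H^\gamma}^2+C(v)\|\mu\|_{L^2}^2$ for the drift term. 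Combining this with the $L^2$ identity gives
\[
 \tfrac{d}{dt}\|\mu\|_{H^{\gamma/2}}^2+\|\mu\|_{\dot H^{\gamma}}^2\le 2C(v)\,\|\mu\|_{L^2}^2\le 2C(v)\,\|\mu_0\|_{L^2}^2 ,
\]
and integration in time produces the a priori bound $\sup_{[0,T]}\|\mu\|_{H^{\gamma/2}}^2+\int_0^T\|\mu\|_{\dot H^\gamma}^2\,dt\le C(v,T)\|\mu_0\|_{H^{\gamma/2}}^2$; moreover $\partial_t\mu\in L^2(0,T;L^2)$ is read off from the equation since $\|v\cdot\nabla\mu\|_{L^2}\lesssim_v\|\mu\|_{H^1}\lesssim\|\mu\|_{H^\gamma}$ (here $\gamma>1$ enters again).

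For existence I would run the Galerkin scheme: with $\Pi_N$ the Fourier projector onto $\{|k|\le N\}$, the linear ODE system $\partial_t\mu_N+\Pi_N(v\cdot\nabla\mu_N)=-\Lambda^\gamma\mu_N$, $\mu_N(0)=\Pi_N\mu_0$, is globally solvable once the a priori bound above is in force, and since $\Pi_N$ is self-adjoint and commutes with $\Lambda^\gamma$, all the estimates hold for $\mu_N$ uniformly in $N$. Extracting weak-$*$ limits $\mu_N\rightharpoonup\mu$ in $L^2(0,T;H^\gamma)$ and $\partial_t\mu_N\rightharpoonup\partial_t\mu$ in $L^2(0,T;L^2)$, the Aubin--Lions lemma gives $\mu_N\to\mu$ strongly in $L^2(0,T;H^s)$ for every $s<\gamma$, in particular in $L^2(0,T;H^1)$, which is enough to pass to the limit in the variable-coefficient term $v\cdot\nabla\mu_N$; the limit solves \eqref{eq:lwp}. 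Since $\mu\in L^2(0,T;H^\gamma)$ and $\partial_t\mu\in L^2(0,T;L^2)$, and $L^2$ is the dual of $H^\gamma$ with pivot $H^{\gamma/2}$, the Lions--Magenes interpolation theorem gives $\mu\in C([0,T];[H^\gamma,L^2]_{1/2})=C([0,T];H^{\gamma/2})$. Uniqueness is immediate: the difference $w$ of two solutions solves the same equation with $w(0)=0$, the pairing $\langle\partial_t w,w\rangle$ is legitimate in this regularity class, and $\nabla\cdot v=0$ gives $\tfrac12\tfrac{d}{dt}\|w\|_{L^2}^2=-\|w\|_{\dot H^{\gamma/2}}^2\le0$, so $w\equiv0$.

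The main obstacle is exactly the $\dot H^{\gamma/2}$ estimate of the drift term: since $\tfrac{\gamma}{2}+1>\gamma$ when $\gamma<2$ one cannot close by transferring all derivatives onto one factor, and it is precisely the duality split at regularity $\sigma=\tfrac{\gamma}{2}-1\in[-\tfrac{\gamma}{2},0)$ --- a range that is non-empty only under the hypothesis $\gamma>1$ --- that renders the top-order contribution absorbable by the dissipation (this is the counterpart, with the extra good term, of the transport estimate of Proposition~\ref{prop:bdd-evol-1} proved in Appendix~\ref{app:transport}). The remaining ingredients --- global solvability of the finite-dimensional ODE, Aubin--Lions compactness, Lions--Magenes time-continuity, and the uniqueness energy identity --- are routine.
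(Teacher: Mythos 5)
Your argument is correct and follows the same standard route the paper only sketches: a Fourier--Galerkin scheme, an energy estimate in $\dot H^{\gamma/2}$ closed by duality/interpolation against the $\dot H^\gamma$ dissipation (using $\gamma>1$ to make the drift term subcritical), Aubin--Lions compactness and Lions--Magenes interpolation for time-continuity, and the $L^2$ energy identity for uniqueness. The paper's proof merely asserts that an $L^2$-level Galerkin construction gives weak solutions and that \say{regularity and uniqueness are straightforward since $\gamma>1$}; your write-up supplies precisely the a priori estimate and passage to the limit that the authors leave implicit.
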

\begin{proof}The existence of weak solutions
\begin{equation}\label{eq:uniq_reg}
  \mu\in L^2(0,T; H^\frac{\gamma}{2})\cap C([0,T];L^2)\text{ with }\partial_t\mu\in L^2(0,T; H^{-(1-\frac{\gamma}{2})})
  \end{equation}
  to initial datum $\mu_0\in L^2(\mathbb{T}^d)$ can be shown via a simple Galerkin scheme. 
Since $\gamma>1$, regularity and uniqueness are straightforward as well.
\end{proof}
\begin{rem}
  If $\gamma\in(0,1]$ local existence and uniqueness of weak solutions $\mu\in C([0,T];H^\frac{1}{2})$ with $\partial_t\mu\in C([0,T];H^{-\frac{1}{2}})$ to the Cauchy problem~\eqref{eq:lwp} with initial datum in $H^\frac{1}{2}$ can still be established: the existence of rough solutions is again obtained via a Galerkin method. To prove the claimed regularity and uniqueness one first notes that the constructed weak solution $\mu$ satisfies the pointwise equality
  \begin{align*}
   \partial_tS_k\mu+\nabla\cdot S_k(v\mu)=-\Lambda^\gamma S_k\mu,
  \end{align*}
where $S_k$ are the LP-projections introduced in~Appendix~\ref{app:transport}, and then proceeds as in the proof of
Proposition~\ref{prop:bounded-evol}.
\end{rem}

Owing to the worse regularity, more care has to be taken when approximating 
the advection-diffusion equation by the pure transport equation.
Our replacement for~\cite[Lemma 2.4]{constantin_diffusion_2008} is the following 
\begin{lemma}[Approximation by pure transport]\label{l:approx-by-transport}Let $v=v(x)$ be a smooth divergence-free vector field. Assume $\gamma\in[1,2)$ and let $\eta_0\in H^\frac{\gamma}{2}(\mathbb{T}^d)$. 
Let $\eta^0\in C([0,\infty);H^\frac{\gamma}{2})$ be a weak solution of the transport problem~\eqref{eq:lin-tp} and let $\eta^\varepsilon=\mu$ solve~\eqref{eq:lwp} with $-\Lambda^\gamma$ replaced~\footnote{In order to facilitate the comparison with~\cite{constantin_diffusion_2008}, we adopt the rescaling to \say{small diffusion on long time scales} as introduced in~\cite{constantin_diffusion_2008}.} by $-\varepsilon\Lambda^\gamma$ and initial datum $\eta_0$. Then
\begin{equation}\label{eq:approx}
 \frac{d}{dt}\|\eta^\varepsilon(t)-\eta^0(t)\|^2_{L^2}\le\frac{\varepsilon}{2}\|\eta^0(t)\|_{\dot H^{\gamma/2}}^2\le \frac{\varepsilon}{2}\exp(C(v)t)\|\eta_0\|_{\dot H^{\gamma/2}}^2,
\end{equation}
where $C(v)$ is the constant from Proposition~\ref{prop:bdd-evol-1}.
\end{lemma}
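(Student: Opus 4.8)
The plan is to compare $\eta^\varepsilon$ and $\eta^0$ directly by testing the difference equation against $\eta^\varepsilon-\eta^0$. First I would note that, since $\eta^0$ solves $\partial_t\eta^0+v\cdot\nabla\eta^0=0$ and $\eta^\varepsilon$ solves $\partial_t\eta^\varepsilon+v\cdot\nabla\eta^\varepsilon=-\varepsilon\Lambda^\gamma\eta^\varepsilon$, the difference $w:=\eta^\varepsilon-\eta^0$ satisfies
\begin{equation*}
 \partial_t w+v\cdot\nabla w=-\varepsilon\Lambda^\gamma\eta^\varepsilon=-\varepsilon\Lambda^\gamma w-\varepsilon\Lambda^\gamma\eta^0.
\end{equation*}
Multiplying by $w$ and integrating over $\mathbb{T}^d$, the transport term $\int w\,(v\cdot\nabla w)=\tfrac12\int v\cdot\nabla(w^2)=0$ by incompressibility, the term $-\varepsilon\int w\Lambda^\gamma w=-\varepsilon\|w\|_{\dot H^{\gamma/2}}^2\le0$ is favourable and can be discarded, and we are left with
\begin{equation*}
 \tfrac12\tfrac{d}{dt}\|w(t)\|_{L^2}^2\le-\varepsilon\int_{\mathbb{T}^d}w\,\Lambda^\gamma\eta^0\,dx
 =-\varepsilon\int_{\mathbb{T}^d}\Lambda^{\gamma/2}w\,\Lambda^{\gamma/2}\eta^0\,dx.
\end{equation*}

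Next I would handle the cross term by Cauchy--Schwarz in $L^2$ followed by Young's inequality, writing $\varepsilon|\int\Lambda^{\gamma/2}w\,\Lambda^{\gamma/2}\eta^0|\le\varepsilon\|w\|_{\dot H^{\gamma/2}}\|\eta^0\|_{\dot H^{\gamma/2}}\le\varepsilon\|w\|_{\dot H^{\gamma/2}}^2+\tfrac{\varepsilon}{4}\|\eta^0\|_{\dot H^{\gamma/2}}^2$. The first term on the right is exactly cancelled (or rather dominated) by the $-\varepsilon\|w\|_{\dot H^{\gamma/2}}^2$ term we had set aside, so after absorption we obtain $\tfrac12\tfrac{d}{dt}\|w\|_{L^2}^2\le\tfrac{\varepsilon}{4}\|\eta^0\|_{\dot H^{\gamma/2}}^2$, i.e.\ the first inequality in~\eqref{eq:approx}. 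Finally, the second inequality in~\eqref{eq:approx} is immediate from Proposition~\ref{prop:bdd-evol-1} applied to the pure transport solution $\eta^0$, which gives $\|\eta^0(t)\|_{\dot H^{\gamma/2}}\lesssim\exp(C(v)t)\|\eta_0\|_{\dot H^{\gamma/2}}$, and hence $\|\eta^0(t)\|_{\dot H^{\gamma/2}}^2\le\exp(C(v)t)\|\eta_0\|_{\dot H^{\gamma/2}}^2$ after possibly adjusting $C(v)$ (or using $2C(v)$); since the statement writes $C(v)$ for the same symbolic constant, one should just note the constant is the one from Proposition~\ref{prop:bdd-evol-1} up to harmless modification.

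The main subtlety I expect is not the energy estimate itself but justifying that these manipulations are legitimate at the stated regularity level. With $\gamma\in[1,2)$, $\eta_0\in H^{\gamma/2}$, the transport solution $\eta^0$ lies only in $C([0,\infty);H^{\gamma/2})$, and the parabolic solution $\eta^\varepsilon$ lies in $L^2(0,T;H^\gamma)\cap C([0,T];H^{\gamma/2})$ with $\partial_t\eta^\varepsilon\in L^2(0,T;L^2)$ by Theorem~\ref{thm:lwp}. So $w=\eta^\varepsilon-\eta^0\in C([0,T];H^{\gamma/2})$ but its time derivative only makes sense in a dual space, and $\Lambda^\gamma\eta^0$ is merely in $H^{-\gamma/2}$; thus the pairing $\int w\,\Lambda^\gamma\eta^0$ should be read as the $\dot H^{\gamma/2}$--$\dot H^{-\gamma/2}$ duality $\langle\Lambda^{\gamma/2}w,\Lambda^{\gamma/2}\eta^0\rangle_{L^2}$, which is exactly what the rewriting above does, and this is finite precisely because $w\in H^{\gamma/2}$. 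The cleanest way to make the whole computation rigorous is to run it on the Littlewood--Paley truncations $S_k$ (as in the remark following Theorem~\ref{thm:lwp} and in Appendix~\ref{app:transport}), where everything is smooth in $x$ and the energy identity is classical, derive the truncated inequality, and then pass to the limit $k\to\infty$ using $S_k\eta^\varepsilon\to\eta^\varepsilon$ in $C([0,T];H^{\gamma/2})$, $S_k\eta^0\to\eta^0$ in $C([0,T];H^{\gamma/2})$, and the uniform bound $\|S_k\eta^0(t)\|_{\dot H^{\gamma/2}}\le\|\eta^0(t)\|_{\dot H^{\gamma/2}}$. This approximation/limiting step is the only real work; the a priori inequality is a two-line cancellation.
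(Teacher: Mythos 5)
Your proof is correct and follows essentially the same route as the paper's: an $L^2$ energy estimate for $w=\eta^\varepsilon-\eta^0$, using incompressibility to annihilate the transport term, interpreting $\int w\,\Lambda^\gamma\eta^0$ as the $\dot H^{\gamma/2}\times\dot H^{-\gamma/2}$ duality pairing, absorbing the cross term by Cauchy--Schwarz and Young, and closing with the transport bound from Proposition~\ref{prop:bdd-evol-1}; your regularity remarks likewise mirror the paper's parenthetical discussion of the bilinear form $B(f,g)$ and the dual-space interpretation of the equation. One small wording slip: you write that the good term $-\varepsilon\|w\|_{\dot H^{\gamma/2}}^2$ ``can be discarded,'' yet two lines later you rely on exactly this term to absorb $\varepsilon\|w\|_{\dot H^{\gamma/2}}^2$ from Young's inequality --- it should be \emph{retained} (as your ``set aside'' later in the same paragraph indicates you actually intended), since dropping it really would leave an unabsorbable $\|w\|_{\dot H^{\gamma/2}}^2$ on the right-hand side.
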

\begin{proof}
 The difference $\eta^\varepsilon-\eta^0$ satisfies
 \begin{equation}\label{eq:dualpair}
  \partial_t(\eta^\varepsilon-\eta^0)+u\cdot\nabla(\eta^\varepsilon-\eta^0)=-\varepsilon\Lambda^\gamma\eta^\varepsilon,
 \end{equation}
 where for fixed time $t$ the equality is to be understood
  in $H^{\frac{\gamma}{2}-1}\subseteq H^{-\frac{\gamma}{2}}$. We can therefore take the dual pairing  $\dot H^{-\frac{\gamma}{2}} \times \dot H^{\frac{\gamma}{2}}$
of the equation with $(\eta^\varepsilon-\eta^0)(t)\in  H^\frac{\gamma}{2}$ 
to obtain after an absorption argument the first inequality in~\eqref{eq:approx}.
 (Here we also used the incompressibility and the smoothness of the flow which guarantee that $B(f,g):=\langle u\cdot\nabla f,g\rangle_{H^{-\frac{1}{2}},H^\frac{1}{2}}$ satisfies $B(f,f)=0$ for all $f\in C^\infty$ and extends uniquely to a bounded bilinear form on $H^\frac{1}{2}\times H^\frac{1}{2}$.)
The second inequality in~\eqref{eq:approx} is just the boundedness of the transport evolution with respect to  $\|\cdot\|_{\dot H^\frac{\gamma}{2}}$ (cf.~\eqref{eq:transport-bound} or Appendix~\ref{app:transport}).
\end{proof}
\begin{rem}
 The statement of Lemma~\ref{l:approx-by-transport} remains true for $\gamma\in(0,1)$ if restricting to initial data in $H^\frac{1}{2}$. Indeed, in this case one only needs to notice that (for fixed time) the equation~\eqref{eq:dualpair} holds in $H^{-\frac{1}{2}}$ and that $(\eta^\varepsilon-\eta^0)(t)\in  H^\frac{1}{2}$.
\end{rem}
The remaining lemmata used in the proof of~\cite[Theorem 1.4]{constantin_diffusion_2008} can either be shown by similar arguments as in Lemma~\ref{l:approx-by-transport} (where for mere $L^2$ data the regularity~\eqref{eq:uniq_reg} has to be used) or require only a formal adaptation (such as replacing the \say{diffusion operator} $-\Gamma$ by $-\Lambda^\gamma$).

     \section{Transport equation in \texorpdfstring{$H^\sigma(\mathbb{T}^d)$}{}    }\label{app:transport}
Here we are concerned with the linear transport equation with a (prescribed) divergence-free smooth velocity field $v=v(x)$:
\begin{align}\label{lin_transport}
 \partial_t\eta+v\cdot\nabla\eta&=0\;\;\text{ in }\;(0,\infty)\times\mathbb{T}^d,\\ 
 \eta(0)&=\eta_0\nonumber.
\end{align} 
Our aim is to prove Proposition~\ref{prop:bdd-evol-1}, i.e.\,the boundedness of the associated evolution in fractional Hilbert spaces $H^\sigma(\mathbb{T}^d), \sigma>0$, where we do not aim for optimal regularity with respect to $v$. In the whole space case fairly general a priori estimates in Besov spaces can be found in~\cite{bahouri_fourier_2011}. As in~\cite{bahouri_fourier_2011} we will make use of a standard tool from harmonic analysis, which we shall introduce in the following.
\subsection{Preliminaries}
We consider a Littlewood-Paley decomposition: let $\phi_0\in C^\infty_c(\mathbb{R}^d)$ be a radial bump function with $\supp\phi_0\subset B_{11/10}(0)$ which is equal to $1$ on $B_{1}(0)$ and satisfies $0\le\phi_0\le1$. Denoting $\phi(\xi):=\phi_0(\xi)-\phi_0(2\xi)$, we then have 
\begin{equation*}
 \phi_0(2\xi)+\sum_{k\ge0}\phi(2^{-k}\xi)=1,\hspace{1cm}\xi\in\mathbb{R}^d.
\end{equation*}
For smooth functions $\eta$ on $\mathbb{T}^d$ we then define the operators 
\begin{equation*}
 S_{-1}\eta(x)=\sum_{\alpha\in\mathbb{Z}^d}\phi_0(2\alpha)\hat\eta(\alpha) \text{e}^{2\pi\text{i}x\cdot\alpha}=\phi_0(0)\hat\eta(0)
\end{equation*} 
and for $k\ge0$
\begin{equation*}
 S_k\eta(x)=\sum_{\alpha\in\mathbb{Z}^d}\phi(2^{-k}\alpha)\hat\eta(\alpha) \text{e}^{2\pi\text{i}x\cdot\alpha}.
\end{equation*}
Note that $S_k$ localises to frequency~$\sim2^k$, i.e. $\supp \widehat{S_k\eta}\subset\{\alpha\in\mathbb{Z}^d:|\alpha|\approx 2^k\}$ and we have equivalence of (semi-) norms
\begin{align}\label{eq:norm-equiv}
  \|\eta\|_{\dot H^\sigma}^2\sim\sum_{k\ge0}2^{2\sigma k}\|S_k\eta\|_{L^2}^2.
\end{align}
We will at times also use the notation $S_{\le N}, S_{M<\dots< N}$ and  $S_{\ge N}$ to denote the sums of operators  corresponding to $\sum_{-1\le k\le N}S_k, \sum_{M< k< N}S_k$ and $\sum_{k\ge N}S_k$. 

\subsection{Boundedness of evolution}
We will now provide a proof of the transport estimate:
\begin{proposition}\label{prop:bounded-evol}Assume $\sigma>0$.
Any sufficiently regular solution $\eta$ of~\eqref{lin_transport} satisfies
 \begin{align}
  \|\eta(t)\|^2_{\dot H^\sigma}\le \exp(C(v)t)\|\eta_0\|^2_{\dot H^\sigma},\hspace{.5cm}t\ge0,
 \end{align}
where the positive constant $C(v)$ saturates the bound $$C(v)\lesssim_{\sigma,d} \|\Lambda^{\sigma+\frac{d}{2}+1}v\|_{L^2}.$$ 
\end{proposition}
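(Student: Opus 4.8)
The plan is to run a Littlewood--Paley decomposition, commute $S_k$ through the transport equation, and close a Gr\"onwall estimate on $\sum_k 2^{2\sigma k}\|S_k\eta\|_{L^2}^2$ using~\eqref{eq:norm-equiv}. First I would apply $S_k$ to~\eqref{lin_transport} to get
\[
 \partial_t S_k\eta + v\cdot\nabla S_k\eta = [v\cdot\nabla, S_k]\eta =: R_k,
\]
test against $S_k\eta$ in $L^2$, and use $\nabla\cdot v=0$ so that $\int v\cdot\nabla S_k\eta\,S_k\eta=0$; this yields $\tfrac12\frac{d}{dt}\|S_k\eta\|_{L^2}^2 = \int R_k\,S_k\eta \le \|R_k\|_{L^2}\|S_k\eta\|_{L^2}$. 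Summing with weights $2^{2\sigma k}$ reduces everything to the commutator bound
\[
 \Big(\sum_{k\ge -1} 2^{2\sigma k}\|R_k\|_{L^2}^2\Big)^{1/2}\lesssim C(v)\,\|\eta\|_{\dot H^\sigma},
\]
after which $\frac{d}{dt}\|\eta\|_{\dot H^\sigma}^2 \le C(v)\|\eta\|_{\dot H^\sigma}^2$ gives the claim by Gr\"onwall.

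The heart of the matter is therefore the commutator estimate, and here I would use a Bony-type paraproduct decomposition of $v\cdot\nabla\eta$: write $v^j\,\partial_j\eta = T_{v^j}\partial_j\eta + T_{\partial_j\eta}v^j + \mathcal{R}(v^j,\partial_j\eta)$ (low-high, high-low, high-high), apply $S_k$ to each piece, and exploit the frequency support of $S_k$ together with the near-orthogonality of the Littlewood--Paley pieces. The low-high term $S_k T_{v^j}\partial_j\eta$ produces the genuine commutator $[S_k, S_{\le k-2}v^j]\,\partial_j S_k\eta$ (plus harmless tails), which one estimates by a first-order Taylor/mean-value argument: the kernel of $S_k$ has $L^1$-norm $O(1)$ and its first moment is $O(2^{-k})$, so $\|[S_k, S_{\le k-2}v^j]\partial_j S_k\eta\|_{L^2}\lesssim 2^{-k}\|\nabla v\|_{L^\infty}\,\|\partial_j S_k\eta\|_{L^2}\lesssim \|\nabla v\|_{L^\infty}\|S_k\eta\|_{L^2}$, which is summable against $2^{2\sigma k}$ with constant $\|\nabla v\|_{L^\infty}$. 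The high-low and high-high terms carry a derivative on $v$ at high frequency and (at most one) derivative on $\eta$; here one places $v$ in $L^\infty$-type norms and $\eta$ in $\dot H^\sigma$, using $\sigma>0$ to sum the geometric series, at the cost of a norm like $\|v\|_{C^1}$ or a few more derivatives of $v$.

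Finally, to convert the $C^1$-type (or $C^{1+}$) control on $v$ into the stated $C(v)\lesssim_{\sigma,d}\|\Lambda^{\sigma+\frac{d}{2}+1}v\|_{L^2}$, I would simply invoke Sobolev embedding on $\mathbb{T}^d$: $H^{\frac{d}{2}+1+\delta}\hookrightarrow C^1$, and the high-high remainder term, which is the one actually requiring extra regularity, needs roughly $\sigma$ additional derivatives on $v$ to be absorbed; bookkeeping the worst case gives the exponent $\sigma+\tfrac{d}{2}+1$. I do not expect sharpness here and would not chase it --- the remark following Proposition~\ref{prop:bdd-evol-1} already concedes the regularity requirement is suboptimal. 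The main obstacle is organising the commutator/paraproduct bookkeeping cleanly so that every one of the finitely many term types is seen to be bounded by (a Sobolev norm of $v$) times $\|\eta\|_{\dot H^\sigma}$ uniformly in $k$ after the dyadic sum; the mean-value estimate for $[S_k,a]$ is the one genuinely nontrivial ingredient, and it is standard (e.g.\ as in~\cite{bahouri_fourier_2011}).
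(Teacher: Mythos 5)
Your plan is sound and tracks the paper's argument closely: both apply $S_k$ to the transport equation, use incompressibility to kill $\int v\cdot\nabla S_k\eta\,S_k\eta$, reduce the problem to a weighted commutator estimate, and close by Gr\"onwall. Where you diverge is in how the commutator is tamed. The paper splits $v=S_{\le k-4}v+S_{>k-4}v$ rather than running a full Bony paraproduct, and its key Lemma~\ref{commutator_gain_slow_varying} is proved entirely on the Fourier side, yielding a bound by $2^{-k}\|\hat v(\alpha)\alpha\|_{\ell^1_\alpha}$ instead of your physical-side, mean-value bound by $2^{-k}\|\nabla v\|_{L^\infty}$. Your version is slightly sharper (the $\ell^1$ of Fourier coefficients dominates the sup of the gradient), but the Fourier-side proof sidesteps any discussion of periodized Littlewood--Paley kernels and their moments on $\mathbb{T}^d$, which is exactly why the paper opts for it. Both ultimately convert to $\|\Lambda^{\sigma+\frac d2+1}v\|_{L^2}$ using $\sigma>0$, either via Cauchy--Schwarz in Fourier (paper) or Sobolev embedding (you).

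One bookkeeping slip worth flagging: you attribute the need for $\sigma$ extra derivatives of $v$ to the high-high remainder $\mathcal R(v^j,\partial_j\eta)$. In fact it is the high-low piece --- high frequency in $v$ paired with potentially \emph{low} frequency in $\eta$, the analogue of $S_k(S_{k-4<\cdots<k+4}v\,\eta)$ in the paper --- that forces one to pay for $\|\eta\|_{L^2}\lesssim\|\eta\|_{\dot H^\sigma}$ and hence pick up $\sigma$ extra derivatives on $v$ in the crude Bernstein estimate. A more careful treatment of the high-low term (keeping only $S_{\le k}\nabla\eta$ rather than all of $\eta$) would in fact beat $\sigma+\frac d2+1$, consistent with the paper's admission that its regularity requirement on $v$ is suboptimal. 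Since you explicitly disclaim sharpness, this does not undermine the argument, but the misattribution is worth correcting if you flesh out the details.
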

The proof exploits the following gain at level $k$ for the commutator involving an LP projection $S_k$ for $k\gg1$.
\begin{lemma}\label{commutator_gain_slow_varying}
For smooth functions $f,g$ on the torus the following commutator estimate holds 
 \begin{equation}
  \|[S_k,g]f\|_{L^2(\mathbb{T}^d)}\le 2^{-k}\|\nabla\phi\|_{L^\infty}\|\hat g(\beta)\beta\|_{l^1_\beta}\|f\|_{L^2(\mathbb{T}^d)}.
 \end{equation} 
\end{lemma}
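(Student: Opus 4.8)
The statement to prove is the commutator bound $\|[S_k,g]f\|_{L^2}\le 2^{-k}\|\nabla\phi\|_{L^\infty}\|\hat g(\beta)\beta\|_{\ell^1_\beta}\|f\|_{L^2}$ (Lemma~\ref{commutator_gain_slow_varying}). The natural approach is to pass to Fourier series and exploit the mean value theorem for the Littlewood--Paley multiplier symbol. First I would write $g(x)=\sum_{\beta\in\mathbb{Z}^d}\hat g(\beta)e^{2\pi i\beta\cdot x}$ and compute the Fourier coefficients of $[S_k,g]f$. Using $\widehat{gf}(\alpha)=\sum_\beta \hat g(\beta)\hat f(\alpha-\beta)$ and the fact that $S_k$ acts as multiplication by $\phi(2^{-k}\alpha)$ on the $\alpha$-th frequency, one finds
\begin{equation*}
 \widehat{[S_k,g]f}(\alpha)=\sum_{\beta\in\mathbb{Z}^d}\hat g(\beta)\,\bigl(\phi(2^{-k}\alpha)-\phi(2^{-k}(\alpha-\beta))\bigr)\,\hat f(\alpha-\beta).
\end{equation*}

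Next I would estimate the symbol difference. By the mean value theorem, $|\phi(2^{-k}\alpha)-\phi(2^{-k}(\alpha-\beta))|\le 2^{-k}\|\nabla\phi\|_{L^\infty}|\beta|$. Substituting this in and defining $h_\beta(x):=f(x-\tfrac{\beta}{?})$ — more precisely the function with Fourier coefficients $\hat f(\alpha-\beta)$, i.e. $h_\beta(x)=e^{2\pi i\beta\cdot x}f(x)$ up to a harmless translation — I would view $[S_k,g]f$ as (bounded by) a superposition $\sum_\beta |\hat g(\beta)|\,2^{-k}\|\nabla\phi\|_{L^\infty}|\beta|\cdot(\text{translate of }f)$. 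The cleanest way to make this rigorous in $L^2$: write $[S_k,g]f=\sum_\beta \hat g(\beta)\,T_\beta^{(k)}f$ where $T_\beta^{(k)}$ is the Fourier multiplier with symbol $\alpha\mapsto \phi(2^{-k}\alpha)-\phi(2^{-k}(\alpha-\beta))$ composed with the modulation $f\mapsto e^{2\pi i\beta\cdot x}f$; then apply the triangle inequality in $L^2$ to get $\|[S_k,g]f\|_{L^2}\le \sum_\beta |\hat g(\beta)|\,\|T_\beta^{(k)}f\|_{L^2}$, and bound each $\|T_\beta^{(k)}f\|_{L^2}$ by Plancherel and the pointwise symbol bound: $\|T_\beta^{(k)}f\|_{L^2}\le 2^{-k}\|\nabla\phi\|_{L^\infty}|\beta|\,\|f\|_{L^2}$. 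Summing over $\beta$ produces exactly $2^{-k}\|\nabla\phi\|_{L^\infty}\bigl(\sum_\beta|\hat g(\beta)||\beta|\bigr)\|f\|_{L^2}=2^{-k}\|\nabla\phi\|_{L^\infty}\|\hat g(\beta)\beta\|_{\ell^1_\beta}\|f\|_{L^2}$, as claimed.

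\textbf{Main obstacle.} There is no deep obstacle here; the estimate is elementary once the Fourier-side formula is written down. The only point requiring a little care is the bookkeeping in the triangle-inequality step: one must make sure that the decomposition $[S_k,g]f=\sum_\beta\hat g(\beta)(\dots)$ converges (which it does, since $g$ is smooth so $\hat g$ decays rapidly, and in particular $\|\hat g(\beta)\beta\|_{\ell^1_\beta}<\infty$), and that each summand is handled by Plancherel with the uniform symbol bound $2^{-k}\|\nabla\phi\|_{L^\infty}|\beta|$ coming from the mean value theorem applied to $\xi\mapsto\phi(2^{-k}\xi)$ on the segment joining $\alpha$ and $\alpha-\beta$. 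One should also note that $\phi$ (the annular LP bump defined just before the lemma) is smooth and compactly supported, so $\nabla\phi\in L^\infty$ and the bound is finite. Alternatively, the same conclusion follows from a physical-space representation of $S_k$ as convolution with $2^{kd}\check\phi(2^k\cdot)$ and the identity $[S_k,g]f(x)=\int 2^{kd}\check\phi(2^k(x-y))(g(y)-g(x))f(y)\,dy$, combined with $|g(x)-g(y)|\le\|\nabla g\|_{L^\infty}|x-y|$ and the first-moment bound on the kernel — but the Fourier approach above is the most direct given the periodic setting and the precise form of the constant in the statement.
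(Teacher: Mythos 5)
Your proposal is correct and follows essentially the same route as the paper: compute $\widehat{[S_k,g]f}(\alpha)=\sum_\beta[\phi(2^{-k}\alpha)-\phi(2^{-k}(\alpha-\beta))]\hat g(\beta)\hat f(\alpha-\beta)$, bound the symbol difference by $2^{-k}\|\nabla\phi\|_{L^\infty}|\beta|$, and sum over $\beta$. The only cosmetic difference is that the paper applies Young's inequality $\ell^1\ast\ell^2\to\ell^2$ directly to the pointwise Fourier bound, whereas you repackage the same sum as a triangle inequality over modulation-multiplier operators and then Plancherel each piece — these are the same argument.
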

\begin{proof}[Proof of Lemma~\ref{commutator_gain_slow_varying}]
 We first note 
\begin{equation}
  \|[S_k,g]f\|_{L^2(\mathbb{T}^d)}=\|\widehat{[S_k,g]f}\|_{l^2(\mathbb{Z}^d)}
\end{equation}
and therefore consider 
\begin{align*}
 \widehat{[S_k,g]f}(\alpha)&=\widehat{S_k(gf)}(\alpha)-\widehat g\ast\widehat{S_kf}(\alpha)\\
 &=\sum_{\beta\in\mathbb{Z}^d}[\phi(2^{-k}\alpha)-\phi(2^{-k}(\alpha-\beta))]\hat g(\beta) \hat f(\alpha-\beta)\\
 &=\sum_\beta 2^{-k}\int_0^1\nabla\phi(2^{-k}(\alpha-(1-s)\beta))ds\cdot\beta \hat g(\beta) \hat f(\alpha-\beta).
\end{align*}
Hence 
\begin{align*}
 |\widehat{[S_k,g]f}(\alpha)|&\le2^{-k}\|\nabla\phi\|_{L^\infty(\mathbb{R}^d)}\sum_\beta|\beta \hat g(\beta)|| \hat f(\alpha-\beta)|. 
\end{align*}
Young's convolution inequality then yields the claim
\begin{align*}
 \|\widehat{[S_k,g]f}\|_{l^2(\mathbb{Z}^d)}\le 2^{-k}\|\nabla\phi\|_{L^\infty(\mathbb{R}^d)}\|\beta \hat g(\beta)\|_{l^1_\beta}\|f\|_{L^2},
\end{align*}
where we used $\|\hat f\|_{l^2}=\|f\|_{L^2}$.
\end{proof}
We are now in a position to show the boundedness of the evolution~\eqref{lin_transport} in $\dot H^\sigma(\mathbb{T}^d)$.
\begin{proof}[Sketch proof of Proposition~\ref{prop:bounded-evol}]
Without loss of generality we can assume $\hat\eta(0)=0$. 
In the following we will omit any possible dependence of constants on $\sigma$ and $d$.
Now let $k\ge0$ be a fixed but arbitrary integer. The equation implies
 \begin{equation*}
  \partial_tS_k\eta=-\nabla\cdot S_k(v\eta)
 \end{equation*}
and hence
\begin{equation*}
\frac{1}{2}\frac{d}{dt}\|S_k\eta\|_{L^2(\mathbb{T}^d)}^2=\int -\nabla\cdot S_k(v\eta)\,S_k\eta.
\end{equation*}
Since by incompressibility
\begin{equation*}
 \int \nabla\cdot (v\,S_k\eta)\,S_k\eta=-\frac{1}{2} \int v\cdot\nabla|S_k\eta|^2=0,
\end{equation*}
it follows that 
\begin{align}\label{eq:Sk}
 \frac{1}{2}\frac{d}{dt}\|S_k\eta\|_{L^2(\mathbb{T}^d)}^2&=\int \nabla\cdot [v,S_k]\eta\,S_k\eta\\\nonumber
 &=\int \nabla\cdot \tilde S_k[v,S_k]\eta\,S_k\eta\hspace{2cm}(\tilde S_k\,S_k=S_k)
 \\&\le\|\nabla\cdot \tilde S_k[v,S_k]\eta\|_{L^2}\|S_k\eta\|_{L^2}\nonumber
 \\&\lesssim 2^{k}\|\tilde S_k[v,S_k]\eta\|_{L^2}\|S_k\eta\|_{L^2},\nonumber
\end{align}
where $\tilde S_k$ denotes a suitable Fourier multiplier localising to frequency~$\sim2^k$ whose symbol is equal to $1$ on $\supp \phi(2^{-k}\cdot)$. We now assume $k\gg1$ and split
\begin{align*}
 v=S_{\le k-4}v+S_{> k-4}v
\end{align*}
and consider
\begin{align}\label{eq:split}
 \tilde S_k[v,S_k]\eta=\tilde S_k[S_{\le k-4}v,S_k]\eta+\tilde S_k[S_{> k-4}v,S_k]\eta.
\end{align}
With regard to the regularity of $\eta$, the first term is the delicate one. It can be estimated using Lemma~\ref{commutator_gain_slow_varying}, as we will show now.
Note that there exists a multiplier $S_k'$ localising to frequency~$\sim2^k$ such that
\begin{align*}
  \tilde S_k[S_{\le k-4}v,S_k]\eta=\tilde S_k[S_{\le k-4}v,S_k]S_k'\eta.
\end{align*}
Now Lemma~\ref{commutator_gain_slow_varying} applied to $g=S_{\le k-4}v, f=S_k'\eta$ yields
\begin{align*}
 \|\tilde S_k[S_{\le k-4}v,S_k]S_k'\eta\|_{L^2}&\le\|[S_{\le k-4}v,S_k] S_k'\eta\|_{L^2}
 \\&\le C 2^{-k}\|\widehat{S_{\le k-4}v}(\alpha)\alpha\|_{l^1_\alpha}\|S_k'\eta\|_{L^2}
 \\&\le C 2^{-k}\|\hat{v}(\alpha)\alpha\|_{l^1_\alpha}\|S_k'\eta\|_{L^2},
\end{align*}
where in the last step we used 
\begin{align*}
\|\widehat{S_{\le k-4}v}(\alpha)\alpha\|_{l^1_\alpha}=\sum_\alpha|\sum_{j\le k-4}\phi(2^{-j}\alpha)\hat v(\alpha)\alpha|\le\sum_\alpha|\hat v(\alpha)\alpha|.
\end{align*}
Finally notice that by the equivalence of norms~\eqref{eq:norm-equiv}
\begin{align*}
 \sum_{k\gg1}2^{2k\sigma}2^k\left(2^{-k}\|\hat{v}(\alpha)\alpha\|_{l^1_\alpha}\|S_k'\eta\|_{L^2}\right)\|S_k\eta\|_{L^2}\lesssim \|\hat{v}(\alpha)\alpha\|_{l^1_\alpha}\|\eta(t)\|_{\dot H^\sigma(\mathbb{T}^d)}^2.
\end{align*}

Estimating the second term in~\eqref{eq:split} is straightforward if one is not interested in optimal regularity results for $v$. For a rough estimate, we note that the part of this term which requires the highest regularity of $v$ is 
\begin{align*}  S_k(S_{k-4<\dots< k+4}v\,\eta)
\end{align*} 
as it may involve low frequencies of $\eta$. We first estimate using a Bernstein inequality (see e.g.~\cite[Lemma~2.1]{bahouri_fourier_2011}, which can be proved in a similar way on $\mathbb{T}^d$)
\begin{align*}
 \|S_k(S_{k-4<\dots< k+4}v\,\eta)\|_{L^2}&\lesssim 2^{\frac{kd}{2}}\|S_k(S_{k-4<\dots< k+4}v\, \eta)\|_{L^1}
 \\&\lesssim  2^{\frac{kd}{2}}\|S_{k-4<\dots< k+4}v\|_{L^2}\| \eta\|_{L^2}
\end{align*}
and note that thanks to Cauchy-Schwarz and $\hat\eta(0)=0$ 
\begin{align*}
 \sum_{k\gg1}2^{2k\sigma}2^k&\left(2^{\frac{kd}{2}}\|S_{k-4<\dots< k+4}v\|_{L^2}\| \eta\|_{L^2}\right)\|S_k\eta\|_{L^2}\\&\lesssim  \sum_{k\gg1}\|S_{k-4<\dots< k+4}(\Lambda^{\sigma+\frac{d}{2}+1}v)\|_{L^2}2^{k\sigma}\|S_k\eta\|_{L^2}\| \eta\|_{L^2}
  \\&\lesssim\|\Lambda^{\sigma+\frac{d}{2}+1}v\|_{L^2}\|\eta\|_{\dot H^\sigma(\mathbb{T}^d)}^2.
\end{align*}

For the low frequencies $k\le k_0$ ($k_0$ being a suitable fixed positive integer), 
we estimate using~\eqref{eq:Sk} and omitting the $k_0$ dependence
\begin{align*}
 \frac{d}{dt}\sum_{0\le k\le k_0}2^{2\sigma k}\|S_k\eta\|_{L^2(\mathbb{T}^d)}^2&\lesssim
 \sum_{0\le k\le k_0}\|[v,S_k]\eta\|_{L^2}\|S_k\eta\|_{L^2}
 \\&\lesssim\|\hat{v}(\alpha)\alpha\|_{l^1_\alpha}\|\eta(t)\|_{\dot H^\sigma(\mathbb{T}^d)}^2.
\end{align*}
In the second step, we used Lemma~\ref{commutator_gain_slow_varying} (mainly in order to illustrate that the estimate is independent of $\hat v(0)$).

We now recall~\eqref{eq:Sk} and combine our estimates for high and low frequencies to conclude
\begin{align}\label{eq:Gronw}
\frac{d}{dt}\|\eta(t)\|_{\dot H^\sigma(\mathbb{T}^d)}^2&\lesssim\left(\|\Lambda^{\sigma+\frac{d}{2}+1}v\|_{L^2}+\|\hat{v}(\alpha)\alpha\|_{l^1_\alpha}\right)\|\eta(t)\|_{\dot H^\sigma(\mathbb{T}^d)}^2.
\end{align}
Finally note that since $\sigma>0$ 
\begin{align*}
 \sum_{\alpha\neq0}|\hat v(\alpha)\alpha|\le\left(\sum_{\alpha\neq0}|\hat v(\alpha)|^2|\alpha|^{2(1+\frac{d}{2}+\sigma)}\right)^\frac{1}{2}\left(\sum_{\alpha\neq0}|\alpha|^{-d-2\sigma}\right)^\frac{1}{2}\lesssim\|\Lambda^{\sigma+\frac{d}{2}+1}v\|_{L^2}.
\end{align*}
Hence, Gronwall's inequality applied to~\eqref{eq:Gronw} yields the claim. 
\end{proof}
\begin{remark*}
 The reader interested in optimising the regularity required for $v$ may consult the comprehensive analysis in~\cite[Theorem~3.14~\&~Lemma~2.100]{bahouri_fourier_2011}.
\end{remark*}

\section{Examples of \texorpdfstring{$\gamma$}{}-RE flows}\label{app:RE-examples}
In this section we provide examples which show that in general the classes of $\gamma$-relaxation enhancing flows introduced in Definition~\ref{def:gammaRE} are different for different $\gamma$. Our construction is an adaptation of~\cite[Proposition~6.2]{constantin_diffusion_2008}. 
\begin{proposition}
 For any $\gamma>\frac{1}{2}$ and any (small) $\varepsilon>0$ there exists a smooth, divergence-free vector field $u(x)$ on $\mathbb{T}^2$ such that the induced unitary evolution $U$ on $L^2(\mathbb{T}^2)$ has discrete spectrum and all non-constant eigenfunctions lie in $H^{\gamma-\varepsilon}\setminus H^{\gamma+\varepsilon}$. In particular, $u$ is $2(\gamma+\varepsilon)$-RE but not $2(\gamma-\varepsilon)$-RE.
\end{proposition}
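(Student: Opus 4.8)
The plan is to carry out the \say{approximation by conjugation} scheme used in \cite[Proposition~6.2]{constantin_diffusion_2008}, adding one extra degree of freedom that lets us prescribe the Sobolev regularity of the eigenfunctions. Let me first recall why one genuinely needs the two-dimensional torus: a smooth \emph{measure-preserving} diffeomorphism of $\mathbb{T}^1$ with irrational rotation number is automatically a rigid rotation (its derivative must be identically $1$), so on $\mathbb{T}^1$ there is no room for a smooth flow whose time-one map is a rotation \emph{dressed by a rough change of variables}. On $\mathbb{T}^2$, by contrast, the group of smooth area-preserving diffeomorphisms is large enough to realise such rough conjugacies as limits of smooth ones.

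Concretely, fix a Liouville frequency $\omega=\lim_n\alpha_n$ with $\alpha_n$ rational, and build $\Phi_t=\lim_n \Phi^{(n)}_t$, where schematically $\Phi^{(n)}_t=h_n^{-1}\circ R^{\alpha_n}_t\circ h_n$, $R^\beta_t(x,y)=(x,y)+t(1,\beta)$ is the linear flow of frequency $\beta$, and $h_n=h_{n-1}\circ g_n$ with $g_n$ an explicit area-preserving diffeomorphism adapted to $\alpha_n$ (of the type used in the Anosov--Katok scheme, so that the rational rotation $R^{\alpha_n}$ is suitably \say{stirred}), of amplitude $\lambda_n\nearrow\infty$. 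Choosing the convergence $\alpha_n\to\omega$ fast enough relative to the norms $\|h_n\|_{C^N}$, the flows $\Phi^{(n)}$ converge in $C^\infty$ to a smooth divergence-free flow, and at the level of measure-preserving maps $\Phi_1$ is isomorphic to the translation $R_\omega$ via the homeomorphism $H=\lim_n h_n$. The amplitudes $\lambda_n$ (together with the rate $\alpha_n\to\omega$) are the tuning parameters: they are chosen so that the eigenfunctions $\psi_k:=e_k\circ H=\lim_n e_k\circ h_n$, with $e_k(z)=\mathrm{e}^{2\pi\mathrm{i}k\cdot z}$, have $H^s$-norms that stay bounded for every $s<\gamma$ but diverge for $s>\gamma$.

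Given this, the conclusion is immediate. Since $H$ is a measure-preserving homeomorphism, composition with $H$ is unitary on $L^2(\mathbb{T}^2)$, so $\{\psi_k\}_{k\in\mathbb{Z}^2}$ is an orthonormal basis of eigenfunctions of $U^1$ (with eigenvalues $\mathrm{e}^{-2\pi\mathrm{i}k\cdot\omega}$); hence $U$ has discrete spectrum. By construction $\psi_k\in\bigcap_{s<\gamma}H^s\subset H^{\gamma-\varepsilon}$ while $\psi_k\notin H^{\gamma+\varepsilon}$ for $k\neq0$, and any non-constant eigenfunction is a finite linear combination of such $\psi_k$, so all non-constant eigenfunctions lie in $H^{\gamma-\varepsilon}\setminus H^{\gamma+\varepsilon}$. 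Applying Definition~\ref{def:gammaRE} with index $2(\gamma\pm\varepsilon)$, i.e.\ testing membership in $H^{\gamma\pm\varepsilon}$, this says exactly that $u$ is $2(\gamma+\varepsilon)$-RE but not $2(\gamma-\varepsilon)$-RE. The restriction $\gamma>\tfrac12$ is visible here as well: $H$ is a homeomorphism of $\mathbb{T}^2$, hence of bounded variation in each variable in a suitable sense, which forces $\psi_k\in H^{1/2-\delta}$ for all $\delta>0$; one cannot push the eigenfunction regularity below $\tfrac12$ by a conjugation of this kind.

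The main difficulty is the simultaneous bookkeeping in the limit. On one side one needs $\alpha_n\to\omega$ to converge so rapidly (relative to all $\|h_n\|_{C^N}$) that $\Phi^{(n)}$ is $C^\infty$-Cauchy and the limit is an honest smooth divergence-free flow whose time-one map is still measurably conjugate to $R_\omega$; on the other side one must force $\|\psi_k\|_{H^s}$ to stay bounded for $s<\gamma$ yet blow up for $s>\gamma$. The upper bounds for $s<\gamma$ are the soft part, following from careful but routine Sobolev estimates on the $g_n$. The lower bound $\psi_k\notin H^{\gamma+\varepsilon}$ is the delicate point: one must exhibit, for each $k\neq0$, an explicit family of Fourier modes (or difference quotients) of $\psi_k$ whose contribution to $\|\cdot\|_{H^{\gamma+\varepsilon}}$ is already large at a finite stage and survives, after a controlled perturbation, to the limit. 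Finally one checks that $H$ is a genuine homeomorphism (and not merely a measurable isomorphism), which is what legitimises the computation of the eigenfunctions above.
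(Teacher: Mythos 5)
You take a genuinely different route from the paper, and in its current form your route has a real gap.

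The paper does \emph{not} use an Anosov--Katok approximation-by-conjugation scheme. It follows \cite[Proposition~6.2]{constantin_diffusion_2008}: the flow is a smooth \emph{time reparametrisation} of an irrational translation flow on $\mathbb{T}^2$, and the eigenfunctions of $U^1$ have the explicit form $\psi(x,y)=\zeta(x,y)\,\mathrm{e}^{2\pi\mathrm{i}\lambda R(x-\alpha y)}$ with $\zeta$ smooth and $R$ a transfer function solving a cohomological equation $R(\xi+\alpha)-R(\xi)=Q(\xi)-1$ over the rotation. The key trick is that the input is a \emph{smooth coboundary with a nowhere-continuous integrable transfer function} $\tilde R$ (Gottschalk--Hedlund/Katok type, valid for Liouville $\alpha$), and one then applies $\Lambda^{-r}$ with $r:=\inf\{s:\Lambda^{-s}\tilde R\in H^\gamma\}$ to produce $R$ with \emph{exactly} $H^\gamma$ as critical regularity; since $\Lambda^{-r}$ commutes with translations, the cohomological equation survives and the smoothness of the flow comes from $Q=\Lambda^{-r}h+1\in C^\infty$. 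The restriction $\gamma>\tfrac12$ enters only through $H^s(\mathbb{T}^1)\hookrightarrow C^0$ for $s>\tfrac12$, which guarantees $r>0$ (otherwise $\tilde R$ might already lie in $H^\gamma$ and no smoothing would be needed). This construction gives precise two-sided control of the Sobolev regularity of $R$ (and hence of $\psi$, via Moser-type chain-rule estimates and a partition of unity in $y$ to handle the $\alpha^{-1}$-periodicity).

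Your Anosov--Katok scheme has the right flavour but leaves the hard parts entirely unargued, and at least one of its pillars is shaky. First, in approximation-by-conjugation constructions the limit conjugacy $H$ is generically only a \emph{measurable} isomorphism, not a homeomorphism; getting $H$ to be a homeomorphism is a substantial additional constraint and you offer no argument for it. Without continuity of $H$, the eigenfunctions $e_k\circ H$ have no a priori Sobolev regularity whatsoever, so both the upper bound ($\psi_k\in H^{\gamma-\varepsilon}$) and the lower bound ($\psi_k\notin H^{\gamma+\varepsilon}$), which you flag as ``routine'' and ``delicate'' respectively, are in fact both unestablished and the entire mechanism for hitting a prescribed critical exponent $\gamma$ is absent. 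Second, your heuristic for $\gamma>\tfrac12$ --- that a homeomorphism of $\mathbb{T}^2$ is ``of bounded variation in each variable in a suitable sense,'' forcing $\psi_k\in H^{1/2-\delta}$ --- is not a valid inference: area-preserving homeomorphisms of $\mathbb{T}^2$ (unlike measure-preserving homeomorphisms of $\mathbb{T}^1$) need not have any BV-type regularity, and nothing forces $e_k\circ H$ into $H^{1/2-\delta}$. The actual source of the threshold $\tfrac12$ in the paper is the one-dimensional Sobolev embedding applied to the discontinuous transfer function, which is a quite different mechanism. In short, your plan is a plausible alternative strategy, but as written it is a sketch of a much harder programme and does not constitute a proof; the paper's cohomological-equation construction sidesteps every one of the difficulties you correctly identify as ``the main difficulty.''
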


\begin{proof}[Sketch proof]
 The proof adapts the construction in~\cite[Proposition 6.2]{constantin_diffusion_2008}.
We therefore only point out the necessary modifications. Let $\alpha\in\mathbb{R}\setminus\mathbb{Q}$ be a positive Liouvillean number. Then, by~\cite[Proposition 6.3]{constantin_diffusion_2008} (see also the original statement in~\cite[Theorem~4.5]{katok2001cocycles}) there exists
 a smooth function $h\in C^\infty(\T^1)$ and a nowhere continuous, integrable function $\tilde R$ on $\T^1$ such that 
 \begin{align}\label{eq:homo}
  \tilde R(\xi+\alpha)-\tilde R(\xi)=h(\xi) \text{ for all }\xi\in\T^1.
 \end{align}
 Since $\tilde R$ is integrable, it can naturally be identified with an element in $H^{\sigma}(\mathbb{T}^2)$ for sufficiently small $\sigma\in\mathbb{R}$. Thus, we can define 
 \begin{align*}
  r:=\inf\{s\in\mathbb{R}: \Lambda^{-s} \tilde R\in H^\gamma\}.
 \end{align*}
The discontinuity of $\tilde R$ and $\gamma>\frac{1}{2}$ imply that $r\in(0,\infty)$.
We now set $R:=\Lambda^{-r}\tilde R$ and $Q:=\Lambda^{-r}h+1$.
Let further $\varepsilon>0$ be small enough such that $\gamma-\varepsilon>\frac{1}{2}$.
Clearly
\begin{align}\label{eq:reg}
 R\in H^{\gamma-\varepsilon}(\T^1)\setminus H^{\gamma+\varepsilon}(\T^1), 
 \end{align}
 and thanks to the Sobolev embedding into H\"older spaces we may henceforth identify $R$ with its H\"older continuous representative. 
 Furthermore
 \begin{align*}
 Q\in C^\infty(\T^1)\;\text{ with }\;\int_{\T^1} Q=1,
\end{align*}
and from~\eqref{eq:homo} we deduce 
 \begin{align}\label{eq:homoMod}
 R(\xi+\alpha)-R(\xi)=Q(\xi)-1 \text{ for all }\xi\in\T^1.
 \end{align}
 Thanks to equation~\eqref{eq:homoMod} and the smoothness of $Q$ we may now proceed as in the proof of~\cite[Proposition 6.2]{constantin_diffusion_2008}. Our arguments only deviate when it comes to determining the regularity of the eigenfunctions $\psi^w_{nl}\in L^2(\T^2)$, where we use the same notation as in \cite{constantin_diffusion_2008}. For this part, let us recall (cf.~\cite[equation~(6.2)]{constantin_diffusion_2008}) that the eigenfunctions have the form 
 \begin{align*}
  \psi(x,y):=\psi_{nl}^w(x,y)=\zeta(x,y)\text{e}^{2\pi\text{i}(n\alpha+l)R(x-\alpha y)},
 \end{align*}
where $n,l\in \mathbb{Z}$. Here $\zeta(x,y)$ is a smooth complex-valued function with $|\zeta|=1$, which is not periodic in $y$.
To complete the proof, it remains to show that the regularity of $R$ implies the asserted regularity of $\psi.$ The remaining steps are then exactly the same as in~\cite{constantin_diffusion_2008}. 

Regarding the regularity of $\psi$, we may henceforth assume $(n,l)\neq(0,0)$ since otherwise the explicit form of $\zeta$ in \cite[equation~(6.2)]{constantin_diffusion_2008} implies that $\psi$ is constant.
Since $R$ is H\"older continuous and bounded,  the regularity~\eqref{eq:reg} implies that for any $\lambda\in \mathbb{R}^*$
\begin{align}\label{eq:Rlambda-reg}
 R_\lambda(\xi):=\text{e}^{\text{i}\lambda R(\xi)}\in H^{\gamma-\varepsilon}(\T^1)\setminus H^{\gamma+\varepsilon}(\T^1).
\end{align}
This can easily be seen by noting that $\text{e}^{\text{i}\lambda\,\cdot}:\mathbb{R}\to \mathbb{S}^1$ is a local $C^\infty$ diffeomorphism and by using standard fractional chain rule/Moser type estimates (see e.g.~\cite[Chapter~3]{taylor1991pseudodifferential}). 

Let us next fix $\lambda=2\pi(n\alpha+l)$, which is different from $0$, and consider the function $$\Theta_\lambda(x,y):=R_\lambda(x-\alpha y):\T^1\times \T^1_{\alpha^{-1}}\to\mathbb{S}^1,$$ where $\T^1\times\T^1_{\alpha^{-1}}$ denotes the periodic box $[0,1)\times[0,\alpha^{-1})$. By using the explicit definition of $\|\cdot\|_{\dot H^s}$  (in terms of Fourier coefficients) one quickly finds
\begin{align*}
 \|\Theta_\lambda\|_{\dot H^s(\T^1\times\mathbb{T}^1_{\alpha^{-1}})}=C_{s,\alpha} \|R_\lambda\|_{\dot H^s(\T^1)}
\end{align*}
for some positive constant $C_{s,\alpha}>0$.
Thus, \eqref{eq:Rlambda-reg} yields
\begin{align}\label{eq:Theta-reg}
 \Theta_\lambda\in (H^{\gamma-\varepsilon}\setminus H^{\gamma+\varepsilon})(\T^1\times \T^1_{\alpha^{-1}}).
\end{align}
To conclude the regularity
\begin{align*}
 \psi\in (H^{\gamma-\varepsilon}\setminus H^{\gamma+\varepsilon})(\T^1\times \T^1)
\end{align*}
one can use a smooth partition of unity of $\mathbb{T}^2$ in $y$-direction corresponding to a finite number of overlapping cylinders of height $\frac{1}{2}\alpha^{-1}$ (if $\alpha>1$). This allows us to split $\psi$ into a finite sum of functions, which may be considered (by first (smoothly) extending by zero to $\mathbb{T}^1\times\mathbb{R}^1$ and then suitably periodising) as being defined on $\T^1\times \T^1_{\alpha^{-1}}$. Each of these summands is the product of a smooth function with $\Theta_\lambda$ so that~\eqref{eq:Theta-reg} implies $\psi\in H^{\gamma-\varepsilon}$. In order to see $\psi\not\in H^{\gamma+\varepsilon}$ one can use similar arguments together with the fact that $|\zeta|=1$ everywhere.

\end{proof}

\end{appendix}

\section*{Acknowledgements}
KH is supported by the doctoral training centre MASDOC at the University of Warwick, which is funded by the EPSRC grant EP/HO23364/1.
JLR is partially supported by the European Research Council grant 616797. 


  
   K.\;Hopf, Mathematics Institute, University of Warwick, Coventry, CV4 7AL, United Kingdom\\
  \textit{E-mail address:} \begin{tt}k.hopf@warwick.ac.uk\end{tt}\\
  
    J.\;L.\;Rodrigo, Mathematics Institute, University of Warwick, Coventry, CV4 7AL, United Kingdom\\
  \textit{E-mail address:} \begin{tt}j.l.rodrigo@warwick.ac.uk\end{tt}

\end{document}